\documentclass[onefignum,onetabnum]{siamart171218}



\usepackage{lipsum}
\usepackage{amsfonts}
\usepackage{graphicx}
\usepackage{epstopdf}
\usepackage{algorithmic}
\usepackage{amsmath,amssymb}
\usepackage{multirow}
\usepackage{subcaption}
\usepackage{enumerate}

\usepackage{breqn}
\usepackage{todonotes,comment}
\usepackage{etoolbox}
\makeatletter
\patchcmd{\@addmarginpar}{\ifodd\c@page}{\ifodd\c@page\@tempcnta\m@ne}{}{}
\makeatother
\reversemarginpar
\usepackage[normalem]{ulem} 
\newcommand\str{\bgroup\markoverwith
{\textcolor{red}{\rule[0.5ex]{2pt}{1.5pt}}}\ULon} 

\ifpdf
  \DeclareGraphicsExtensions{.eps,.pdf,.png,.jpg}
\else
  \DeclareGraphicsExtensions{.eps}
\fi


\newsiamremark{remark}{Remark}
\newsiamremark{hypothesis}{Hypothesis}
\crefname{hypothesis}{Hypothesis}{Hypotheses}
\newsiamthm{claim}{Claim}

\newcommand{\TheTitle}{A Primal-Dual Algorithm for General Convex-Concave Saddle Point Problems}
\newcommand{\TheShortTitle}{A Primal-Dual Algorithm for SP Problems}
\newcommand{\TheAuthors}{Erfan Yazdandoost Hamedani, and Necdet Serhat Aybat}

\headers{\TheShortTitle}{\TheAuthors}

\title{{\TheTitle}
}

\author{
  Erfan Yazdandoost Hamedani\thanks{Industrial \& Manufacturing Engineering Department, The Pennsylvania State University, PA
  (\email{evy5047@psu.edu}, \email{nsa10@psu.edu}).}
  \and
  Necdet Serhat Aybat\footnotemark[2]}

\usepackage{amsopn}
\DeclareMathOperator{\diag}{diag}


\ifpdf
\hypersetup{
  pdftitle={\TheTitle},
  pdfauthor={\TheAuthors}
}
\fi



\DeclareMathOperator*{\argmin}{\arg\!\min}
\DeclareMathOperator*{\argmax}{\arg\!\max}

\def\grad{\nabla}

\def\ba{\mathbf{a}}
\def\bb{\mathbf{b}}

\def\be{\mathbf{e}}

\def\br{\mathbf{r}}


\def\bD{\mathbf{D}}
\def\bE{\mathbf{E}}

\def\cB{\mathcal{B}}

\def\cG{\mathcal{G}}

\def\cK{\mathcal{K}}
\def\cL{\mathcal{L}}
\def\cM{\mathcal{M}}
\def\cN{\mathcal{N}}
\def\cO{\mathcal{O}}
\def\cP{\mathcal{P}}

\def\cS{\mathcal{S}}
\def\cT{\mathcal{T}}

\def\cX{\mathcal{X}}
\def\cY{\mathcal{Y}}
\def\cZ{\mathcal{Z}}

\def\smskip{\smallskip}

\def\texitem#1{\par\smskip\noindent\hangindent 25pt
               \hbox to 25pt {\hss #1 ~}\ignorespaces}


\def\norm#1{\left\|#1\right\|}

\newcommand{\BEAS}{\begin{eqnarray*}}
\newcommand{\EEAS}{\end{eqnarray*}}
\newcommand{\BEA}{\begin{eqnarray}}
\newcommand{\EEA}{\end{eqnarray}}
\newcommand{\BEQ}{\begin{eqnarray}}
\newcommand{\EEQ}{\end{eqnarray}}
\newcommand{\BIT}{\begin{itemize}}
\newcommand{\EIT}{\end{itemize}}
\newcommand{\BNUM}{\begin{enumerate}}
\newcommand{\ENUM}{\end{enumerate}}

\newcommand{\BA}{\begin{array}}
\newcommand{\EA}{\end{array}}


\newcommand{\ones}{\mathbf 1}

\newcommand{\reals}{\mathbb{R}}








\newcommand{\dom}{\mathop{\bf dom}}

\newcommand{\intr}{\mathop{\bf int}}
\newcommand{\relint}{\mathop{\bf rel int}}






\newif\ifpagenumbering
\pagenumberingtrue

\pagenumberingfalse

%
%
\newsavebox{\theorembox}
\newsavebox{\lemmabox}
\newsavebox{\defnbox}
\newsavebox{\corollarybox}
\newsavebox{\propositionbox}
\newsavebox{\remarkbox}
\newsavebox{\assbox}
\savebox{\theorembox}{\noindent\bf Theorem}
\savebox{\lemmabox}{\noindent\bf Lemma}
\savebox{\defnbox}{\noindent\bf Definition}
\savebox{\corollarybox}{\noindent\bf Corollary}
\savebox{\propositionbox}{\noindent\bf Proposition}
\savebox{\remarkbox}{\noindent\bf Remark}
\savebox{\assbox}{\noindent\bf Assumption}
\newtheorem{assumption}{\usebox{\assbox}}
\newtheorem{defn}{\usebox{\defnbox}}






\def\fprod#1{\left\langle#1\right\rangle}

\def\ind#1{\mathbb{I}_{#1}}

\def\id{\mathbf{I}}

\def\sa#1{{#1}}
\def\rev#1{\textcolor{black}{#1}}

\definecolor{green}{RGB}{14,163,78}

\def\eyh#1{\textcolor{cyan}{#1}}
\def\eyz#1{\textcolor{teal}{#1}}

\begin{document}

\maketitle
\begin{abstract}
In this paper, we propose a primal-dual algorithm with a {novel momentum term using the partial gradients of the coupling function} that can be viewed as a generalization of the method proposed by Chambolle and Pock in 2016 
to solve saddle point problems defined by a convex-concave function $\cL(x,y)=f(x)+\Phi(x,y)-h(y)$ with a general coupling term $\Phi(x,y)$ that is \emph{not} assumed to be bilinear. {Assuming $\grad_x\Phi(\cdot,y)$ is Lipschitz 
for any fixed $y$, and {$\grad_y\Phi(\cdot,\cdot)$ is Lipschitz}, we show that the iterate sequence converges to a saddle point; and
for any $(x,y)$, 
we derive error bounds in terms of {$\cL(\bar{x}_k,y)-\cL(x,\bar{y}_k)$} for the ergodic sequence $\{\bar{x}_k,\bar{y}_k\}$.} In particular, we show $\cO(1/k)$ rate when the problem is merely convex in $x$. 
Furthermore, assuming $\Phi(x,\cdot)$ is linear 
for each fixed $x$ and $f$ is strongly convex, we obtain the 
ergodic convergence rate of $\cO(1/k^2)$ {-- we are not aware of another single-loop method in the related literature achieving the same rate when $\Phi$ is not bilinear.} Finally, we propose a backtracking technique which does not require the knowledge of Lipschitz constants while ensuring the same convergence results.
\sa{We 
also consider} convex optimization problems with nonlinear functional constraints and we show that using the backtracking scheme, \sa{the} optimal convergence rate can be achieved even when the dual domain is unbounded.
We tested our method against other state-of-the-art first-order algorithms and interior point methods for solving quadratically constrained quadratic problems with synthetic data, the kernel matrix learning and regression with fairness constraints arising in machine learning. 
\end{abstract}
\section{Introduction}
\label{sec:intro}
{Let $(\cX,\norm{\cdot}_{\cX})$ and $(\cY,\norm{\cdot}_{\cY})$ be finite dimensional, normed vector spaces.} In this paper, we study the following saddle point~(SP) problem:\vspace*{-1mm}
\begin{equation}\label{eq:original-problem}
(P):\quad \min_{x\in\cX}\max_{y\in\cY} \cL(x,y)\triangleq f(x)+\Phi(x,y)-h(y),\vspace*{-1mm}
\end{equation}
where $f:\cX\rightarrow \reals\cup\{+\infty\}$ and $h:\cY\rightarrow \reals\cup\{+\infty\}$ are convex functions (possibly nonsmooth) and $\Phi:\cX\times\cY\rightarrow \reals$ is a 
\sa{continuous function with certain differentiability properties}, convex in $x$ and concave in $y$. Our objective is to design an efficient first-order method to compute a saddle point of the structured convex-concave function $\cL$ in~\eqref{eq:original-problem}.
The problem $(P)$ covers a broad class of optimization problems, e.g., convex optimization with nonlinear conic constraints which itself includes LP, QP, QCQP, SOCP, and SDP as its subclasses. Indeed, consider \vspace*{-2mm}
\begin{equation}
\label{eq:conic_problem}
\min_{x\in\reals^n}~ \rho(x)\triangleq f(x)+g(x) \quad \hbox{s.t.}  \quad  G(x)\in -\cK,\vspace*{-2mm}
\end{equation}
{where $\cK\subseteq \cY^*$ is a closed convex cone in the dual space $\cY^*$}, $f$ is convex (possibly nonsmooth), $g$ is convex with a Lipschitz continuous gradient, $G:\cX\rightarrow \cY^*$ is
a smooth $\cK$-convex
, Lipschitz function having \sa{a Lipschitz continuous Jacobian}. 
Various optimization problems that frequently arise in many important 
applications are special cases of the conic problem in \eqref{eq:conic_problem}, e.g., primal or dual formulations of $\ell_1$ or $\ell_2$-norm soft margin SVM, ellipsoidal kernel machines~\cite{shivaswamy2007ellipsoidal}, kernel matrix learning~\cite{gonen2011multiple,lanckriet2004learning} etc.
Using Lagrangian duality, one can equivalently write \eqref{eq:conic_problem} as \vspace*{-1mm}
\begin{equation}\label{eq:conic_problem_equivalent}
\min_{x\in\reals^n}\max_{y\in\cK^*} f(x)+g(x)+\fprod{G(x),y},\vspace*{-1mm}
\end{equation}
which is a special case of \eqref{eq:original-problem}, i.e., $\Phi(x,y)=g(x)+\fprod{G(x),y}$ and $h(y)=\ind{\cK^*}(y)$ is the indicator function of $\cK^*$, where {$\cK^*\subseteq\cY$} denotes the dual cone of $\cK$.

{\bf Related Work.} Constrained convex optimization can be viewed as a special case of SP \sa{problem}~\eqref{eq:original-problem}, and recently some first-order methods and their randomized-coordinate variants are proposed to 
solve $\min\big\{ f(x)+g(x):G(x)\in-\reals^m_+\big\}$. In \cite{lin2017levelset}, a level-set method with iteration complexity guarantees is proposed for nonsmooth/smooth and strongly/merely convex settings. In \cite{xu2017first}, a primal-dual method based on the linearized augmented Lagrangian method (LALM) is proposed with \sa{$\cO(1/k)$} sublinear convergence rate in terms of suboptimality and infeasibility (see also \cite{yu2017primal} for another primal-dual algorithm 
with $\cO(1/k)$ rate. However, none of these methods can solve the more general SP problem we consider in this paper.

{SP problems \sa{have} become popular in recent years due to their generality and ability to directly solve constrained optimization problems with certain special structures. There has been several 
\sa{work on} first-order primal-dual algorithms for 
\eqref{eq:original-problem} when $\Phi(x,y)$ is bilinear, such as~\cite{chambolle2011first,dang2014randomized,chambolle2016ergodic,he2016accelerated,wang2017exploiting,du2018linear}, and few others have considered \sa{a} more general setting similar to this paper~\cite{palaniappan2016stochastic,nemirovski2004prox,juditsky2011first,he2015mirror,kolossoski2017accelerated} -- see Tseng's forward-backward-forward algorithm~\cite{tseng2000modified} for monotone inclusion problems and a projected reflected gradient method for monotone VIs~\cite{malitsky2015projected} which can also be used to solve~\eqref{eq:original-problem}.} Here, we briefly review some recent work that is closely related to ours. In the rest, 
we assume that \eqref{eq:original-problem} has a saddle point $(x^*,y^*)$.

In~\cite{chambolle2011first},
a special case of \eqref{eq:original-problem} with \sa{a} bilinear coupling term is studied:
\begin{align}\label{eq:CP-problem}
\min_{x\in\cX}\max_{y\in\cY} \hat{f}(x)+\fprod{Kx,~y}-h(y),
\end{align}
for some linear operator $K:\cX\rightarrow\cY^*$, where $\hat{f}$ and $h$ are \sa{closed} convex functions with easily computable prox {(Moreau) maps~\cite{hiriart2012fundamentals}}. The authors proposed a primal-dual algorithm which guarantees that \sa{$(x_k,y_k)$ converges to a saddle point $(x^*,y^*)$,} $\cL(\bar{x}_K,y^*)-\cL(x^*,\bar{y}_K)$ converges to $0$ with $\cO(1/K)$ rate when $\hat{f}$ is merely convex and with $\cO(1/K^2)$ rate when $\hat{f}$ is strongly convex, where $\{(\bar{x}_k,\bar{y}_k)\}_k$ is a weighted ergodic average sequence. Later, both Condat~\cite{condat2013primal} and Chambolle \& Pock~\cite{chambolle2016ergodic} studied some related primal-dual algorithms for an SP problem of the form in \eqref{eq:CP-problem} such that $\hat{f}$ has a composite convex structure, i.e., $\hat{f}(x)=f(x)+g(x)$ such that $f$ has an easy prox map and $g$ has Lipschitz continuous gradient -- also see~\cite{vu2013splitting} for a related method. In~\cite{condat2013primal}, convergence of the proposed algorithm is shown without providing any rate statements. In~\cite{chambolle2016ergodic}, it is shown that their previous work in~\cite{chambolle2011first} can be extended to handle non-linear proximity operators based on Bregman distance functions while guaranteing the same rate results -- see also~\cite{chen2014optimal} for an optimal method with $\cO(1/K)$ rate to solve \emph{bilinear} SP problems. \sa{Later Malitsky \& Pock \cite{malitsky2018first} proposed a primal-dual method with linesearch to solve \eqref{eq:CP-problem} with the same rate results as in \cite{chambolle2016ergodic}.}

In a recent work, He and Monteiro~\cite{he2016accelerated} 
considered a \emph{bilinear} SP problem 
from a monotone inclusion perspective. They proposed an accelerated algorithm based on hybrid proximal extragradient~(HPE) method, and showed that an $\epsilon$-saddle point $(x_\epsilon,y_\epsilon)$ can be computed within $\cO(1/\epsilon)$ iterations. More recently, Kolossoski and Monteiro~\cite{kolossoski2017accelerated} proposed another HPE-type method to solve a more general SP problem as in~\eqref{eq:original-problem} over \emph{bounded} sets {-- it is worth emphasizing that for nonlinearly constrained convex optimization, 
the dual optimal solution set may be unbounded and/or it may not be trivial to get an upper bound on a dual solution. Indeed, the method in~\cite{kolossoski2017accelerated} is an inexact proximal point method, each prox subinclusion (outer step) is solved using an accelerated gradient method (inner steps).} This work generalizes the method in~\cite{he2016accelerated} as the new method can deal with SP problems that are not bilinear, and it can use general Bregman distances instead of the Euclidean one.

Nemirovski~\cite{nemirovski2004prox} and {Juditsky \& Nemirovski~\cite{juditsky2011first}} also studied 
a convex-concave 
SP problem with a general coupling,
$\min_{x\in X}\max_{y\in Y} \Phi(x,y)$. Writing it as a variational inequality~\sa{(VI)} problem, they proposed a prox-type method, Mirror-prox. Assuming that $X$ and $Y$ are convex \emph{compact} sets, $\Phi$ 
is differentiable, and $F(x,y)=[\grad_x\Phi(x,y)^\top, -\grad_y\Phi(x,y)^\top]^\top$ is Lipschitz with constant $L$, $\cO(L/K)$ ergodic convergence rate is shown for Mirror-prox where in each iteration $F$ is computed \emph{twice} and a projection onto $X\times Y$ is computed with respect to a general (Bregman) distance. Moreover, \sa{in \cite{juditsky2011first}, for the case $\grad_y\Phi(x,\cdot)$ is linear for all $x$, i.e., $L_{yy}=0$,
assuming $Y$ is compact and $\Phi(\cdot,y)$ is strongly convex for any fixed $y$, 
convergence rate of $\cO(1/K^2)$ is shown for a \emph{multi-stage} method which repeatedly calls Mirror-Prox in each stage}.
More recently, He et al.~\cite{he2015mirror} extended \rev{the Mirror-Prox method in~\cite{nemirovski2004prox}} to 
\sa{handle} 
$\min_{x\in\cX}\max_{y\in\cY} f(x)+\Phi(x,y)-h(y)$ \sa{with the same convergence 
guarantees as in~\cite{nemirovski2004prox}}, where $f$ and $h$ are 
\sa{closed} convex 
with simple prox maps with respect to a general (Bregman) distance. In these papers, \sa{both the primal and dual step-sizes can be at most $1/L$.} \rev{Later, Malitsky~\cite{malitsky2018proximal} also considered} a monotone 
\sa{VI} problem 
of computing $z^*\in \sa{\cZ}$ such that $\fprod{F(z^*),z-z^*}+g(z)-g(z^*)\geq 0$ for all $z\in \sa{\cZ}$, where $F:\cZ\to\cZ$ is a monotone operator, $g$ is a \sa{proper closed} convex function \sa{and $\cZ$ is a finite-dimensional vector space with inner product}. The author proposed a proximal extrapolated gradient method (PEGM) with ergodic convergence rate of $\cO(1/K)$. The proposed method enjoys a backtracking scheme to estimate the local Lipschitz constant of the monotone map $F$ -- see also~\cite{malitsky2018forward} for a related line-search method in a more general setting of monotone inclusion problems.

\rev{Finally, while our paper was under review, we become aware of 
another primal-dual method proposed in~\cite{boob2019stochastic} for solving optimization problems with functional constraints considering convex/nonconvex problems and stochastic/deterministic oracle settings in a unified manner with convergence guarantees -- we compare our results with those in~\cite{boob2019stochastic} for convex and strongly convex minimization problems at the end of the contribution paragraph below.}

{\bf Application.} From the application perspective, there are many real-life problems arising in machine learning, signal processing, image processing, finance, etc. such that they can be formulated as a special case of \eqref{eq:original-problem}. In particular, the following problems arising in machine learning can be efficiently solved using the methodology proposed in this paper: 
\textbf{i)} robust classification under Gaussian uncertainty in feature observations leads to SOCP problems~\cite{bhattacharyya2005second}; \textbf{ii)} distance metric learning formulation proposed in~\cite{xing2003distance} is a convex optimization problem over positive semidefinite matrices subject to nonlinear convex constraints; \textbf{iii)} training ellipsoidal kernel machines~\cite{shivaswamy2007ellipsoidal} requires solving nonlinear SDPs; \textbf{iv)} learning a kernel matrix for transduction problem can be cast as an SDP or a QCQP~\cite{lanckriet2004learning,gonen2011multiple}.

In this paper, following~\cite{lanckriet2004learning}, we implemented our method for learning a kernel matrix to predict the labels of partially labeled data sets. To summarize the problem, {suppose we are given a set of labeled data points consisting of feature vectors $\{\ba_i\}_{i\in\cS}\subset\reals^m$, corresponding labels $\{b_i\}_{i\in\cS}\subset\{-1,+1\}$, and a set of unlabeled test data {$\{\ba_i\}_{i\in\cT}\subset\reals^m$}. Let $n_{tr}\triangleq |\cS|$ and $n_t\triangleq |\cT|$ denote the cardinality of the training and test sets, respectively, and define $n\triangleq n_{tr}+n_t$.} Consider $M$ different embedding of the data corresponding to kernel functions $k_\ell:\reals^m\times\reals^m\rightarrow\reals$ for $\ell=1,...,M$. Let $K_\ell\in\mathbb{S}^n_{+}$ be the kernel matrix such that $[K_\ell]_{ij}=k_\ell(\ba_i,\ba_j)$ for $i,j\in\cS\cup\cT$ and consider the partition of
{\small
$K_\ell=\left(
   \begin{array}{cc}
     K_\ell^{tr} & K_\ell^{tr,t} \\
     K_\ell^{t,tr} & K_\ell^t \\
   \end{array}
 \right)
$}, where $K_\ell^{tr}=[k_\ell(\ba_i,\ba_j)]_{i,j\in\cS}\in\mathbb{S}^{n_{tr}}$, $K_\ell^{t}=[k_\ell(\ba_i,\ba_j)]_{i,j\in\cT}\in\mathbb{S}^{n_{t}}$ and ${K_\ell^{t,tr}}^\top=K_\ell^{tr,t}=[k_\ell(\ba_i,\ba_j)]_{i\in\cS,j\in\cT}\in\reals^{n_{tr}\times n_t}$.

The objective is to learn a kernel matrix $K$ belonging to a class of kernel matrices which is a convex set generated by $\{K_\ell\}_{\ell=1}^M$, such that it minimizes the training error of a kernel SVM as a function of $K$. Skipping the details in~\cite{lanckriet2004learning}, one can study both $\ell_1$- and $\ell_2$-norm soft margin SVMs by considering the following generic formulation:\vspace*{-1mm}
{
\begin{align}
\label{eq:kernel_learn}
\min_{\substack{K\in\cK, \\ \text{trace}(K)=c}}\ \max_{\substack{\rev{\alpha:\ \mathbf{0}\leq \alpha\leq C\be}, \\ \fprod{\bb, \alpha}=0}} 2\be^\top \alpha - \alpha^\top (G(K^{tr})+\lambda\id)\alpha,\vspace*{-8mm}
\end{align}}%
where $c,C>0$ and $\lambda\geq 0$ are model parameters, \rev{$\be\in\reals^{n_{tr}}$ denotes the vector of ones}, $\bb=[b_i]_{i=1}^{n_{tr}}$ and $G(K^{tr})\triangleq\diag(\bb)K^{tr}\diag(\bb)$.
Suppose we want to learn a kernel matrix belonging to \rev{the class}  $\cK=\{\sum_{\ell=1}^M\eta_\ell K_\ell:\ \eta_\ell\geq 0,\ \ell=1,\ldots, M\}$; clearly, $K\in\cK$ implies $K\succeq 0$. For 
kernel class $\cK$,
\eqref{eq:kernel_learn} takes the following form: \vspace*{-2mm}
{
\begin{align}
\label{eq:kernel_learn_simple}
\min_{\substack{\eta:\ \fprod{\br,\eta}=c,\\ \eta\geq 0}}\ \max_{\substack{\rev{\alpha:\ \mathbf{0}\leq \alpha\leq C\be}, \\ \ \ \fprod{\bb,\alpha}=0}} 2\be^\top \alpha - \sum_{\ell=1}^M {\eta_\ell}\alpha^\top G(K^{tr}_\ell)\alpha-\lambda\norm{\alpha}_2^2,\vspace*{-8mm}
\end{align}}%
where $\eta=[\eta_\ell]_{\ell=1}^M$ and $\br=[r_\ell]_{\ell=1}^M$ for $r_\ell=\text{trace}(K_\ell)$. Clearly, \eqref{eq:kernel_learn_simple} is a special case of \eqref{eq:original-problem}.
In~\cite{lanckriet2004learning}, 
\eqref{eq:kernel_learn_simple} is equivalently represented as a QCQP and then solved using MOSEK~\cite{mosek}, a comercial interior-point method~(IPM). \rev{Per-iteration computational complexity} of a generic IPM is $\cO(Mn_{tr}^3)$ for solving the resulting QCQP~\cite{nesterov1994interior}, 
\rev{while it is $\cO(Mn_{tr}^2)$ for the first-order primal-dual method we proposed in this paper.} 
Therefore, when $n_{tr}$ is very large, IPMs are not suitable for solving large-scale problems unless the data matrix has certain sparsity structure; and in practice as $n_{tr}$ grows, the first-order methods with much lower per-iteration complexity will have the advantage over IPMs for computing low-to-medium level accuracy solutions.

{\bf Contribution.} 
We propose an \rev{accelerated primal-dual~(APD)} algorithm with a momentum term that can be viewed as a generalization of the method in \cite{chambolle2016ergodic} to solve SP problems with a more general coupling term $\Phi$ that is \emph{not} bilinear. Assuming {$\grad_y\Phi(\cdot,\cdot)$ is Lipschitz} and $\grad_x\Phi(\cdot,y)$ is Lipschitz for any fixed $y$,
we show that $(x_k,y_k)$ converges to a saddle point $(x^*,y^*)$ and \rev{for any $(x,y)\in\cX\times\cY$ we derive error bounds in terms of $\cL(\bar{x}_K,y)-\cL(x,\bar{y}_K)$} for the ergodic sequence {-- without requiring primal-dual domains to be bounded}; in particular, we show $\cO(1/K)$ rate when the problem is merely convex in $x$ using a constant step-size rule, \sa{where $K$ denotes the number of gradient computations}. Furthermore, assuming $\Phi(x,\cdot)$ is linear 
for each fixed $x$ and $f$ is strongly convex, we obtain the 
ergodic convergence rate of $\cO(1/K^2)$ --
{we are not aware of any other single-loop method with $\cO(1/K^2)$ rate when $\Phi$ is not bilinear.} Moreover, we develop a backtracking scheme, \rev{APDB,} which ensures that above stated rate results 
continue to hold in terms of the total number of gradient computations even though the Lipschitz constants, $L_{xx}$, $L_{yx}$ and $L_{yy}$, are \emph{not} known -- see Assumption~\ref{assum}. To best of our knowledge, for the strongly convex-concave SP problems, a line-search method ensuring $\cO(1/K^2)$ rate is proposed for the first time for when the coupling function $\Phi$ is \emph{not} bilinear.
\rev{In the context of constrained optimization problems, the backtracking scheme helps us demonstrate convergence results even when a dual bound is not available or easily computable. Our results continue to hold when the dual optimal solution set is \emph{unbounded}.}

{The previous art for solving SP problems in the general setting include the Mirror-Prox algorithm in~\cite{nemirovski2004prox,he2015mirror}, the HPE-type method in~\cite{kolossoski2017accelerated} \sa{and the PEGM by Malitsky~\cite{malitsky2018proximal}}.
All these methods including ours have $\cO(1/\epsilon)$ complexity under mere convexity; however, \rev{our APD and APDB methods both have} an improved $\cO(1/\sqrt{\epsilon})$ rate when $f$ is strongly convex.} 
Indeed, all the rates derived here are the optimal rates for \sa{the settings considered in this paper -- see~\cite{ouyang2018lower} for the lower complexity bounds of $\cO(1/K)$ and $\cO(1/K^2)$ associated with first-order primal-dual methods for convex-concave and strongly convex-concave bilinear SP problems, respectively}. {When compared to~\cite{kolossoski2017accelerated}, ours is a simpler one-loop algorithm while HPE~\cite{kolossoski2017accelerated} is a two-loop method, \sa{having outer and inner iterations}, requiring a more stronger oracle for subproblems -- see Remark~\ref{rem:prox} and also requiring a bounded domain. Moreover, while convergence to a \emph{unique} limit point is shown for APD, a limit point result (weaker than ours) is shown in~\cite{kolossoski2017accelerated}, i.e., any limit point is a saddle point -- see end of p.1254 in~\cite{kolossoski2017accelerated}.} Another competitor algorithm, Mirror-Prox, requires computing both the primal and dual gradients \emph{twice} during each iteration while the proposed APD method only needs to compute them once; thus, saving the computation cost by half yet achieving the same iteration complexity \rev{-- see Remark~\ref{rem:MP-APD}}. Moreover, 
under the assumption that $\grad \Phi(\cdot,\cdot)$ is Lipschitz with constant $L$, the method in~\cite{he2015mirror} has a primal-dual step-size less than $1/L$; 
compared to \cite{he2015mirror} our assumption on $\Phi$ is weaker, 
our primal and dual step-sizes are larger than $1/L$ {-- see Remark~\ref{rem:differentiability} for further weakening the assumptions on $\Phi$.}
{Finally, the numerical 
results \rev{also} clearly demonstrate that APD has roughly the same iteration complexity as proximal Mirror-Prox; but, requires half the computational efforts (reflected by the savings in computation time).} \sa{Finally, setting $z=[x^\top, y^\top]^\top$, $F(x,y)=[\grad_x\Phi(x,y)^\top, -\grad_y\Phi(x,y)^\top]^\top$ and $g(x,y)=f(x)+h(y)$ within the VI problem of~\cite{malitsky2018proximal} mentioned above, PEGM can deal with \eqref{eq:original-problem}. It is worth emphasizing that PEGM utilizes a single step-size to update the next iterate and uses $\norm{F(z)-F(\bar{z})}$ to estimate the Lipschitz constant $L=L_{xx}+2L_{yx}+L_{yy}$ locally within the backtracking procedure while our method uses two different step-sizes (one for primal and one for dual updates) 
to locally approximate $L_{xx}+L_{yx}$ and $L_{yx}+2L_{yy}$ for choosing primal and dual step-sizes, respectively -- see Assumption~\ref{assum}. Empirically, we have observed that exploiting the special structure of SP compared to more general VI problems and allowing primal and dual steps chosen separately lead to larger step-sizes, speeding up the convergence in practice.}

\rev{For the composite convex problem in \eqref{eq:conic_problem} 
with $\cK=\reals^m_+$, the 
method in~\cite{boob2019stochastic} can deal with \emph{unbounded} dual domain 
through employing an extra linearization for the constraint function.
Provided that a bound $B\geq\norm{y^*}+1$ is \emph{known}, where $y^*$ denotes an arbitrary optimal dual solution to \eqref{eq:conic_problem}, the deterministic method in~\cite{boob2019stochastic} can compute an $\epsilon$-optimal and $\epsilon$-feasible solution to \eqref{eq:conic_problem} 
with $\cO(1/\epsilon)$ and $\cO(1/\sqrt{\epsilon})$ complexity when $g$ is convex and strongly convex, respectively.
 On the other hand, if $B<\norm{y^*}+1$, then the complexity drops to $\cO(1/\epsilon^2)$ and $\cO(1/\epsilon)$ for convex and strongly convex settings, respectively --see the discussion after \cite[Corollary 2.2 and Theorem 2.3]{boob2019stochastic}. Furthermore, the step-sizes 
 in~\cite{boob2019stochastic} require the knowledge of global Lipschitz constants, and the established convergence rate for the convex setting is non-asymptotic, as the step-size is chosen depending on the tolerance (see $\eta$ choice in~\cite[Theorem 2.3]{boob2019stochastic}); therefore, the sequence won't converge to a primal-dual pair in the limit. In contrast, our 
 backtracking scheme APDB generates an asymptotically optimal 
 sequence, and even if a bound on $\norm{y^*}$ is \emph{not known}, it 
 achieves the optimal complexity guarantees of $\cO(1/\epsilon)$ and $\cO(1/\sqrt{\epsilon})$ oracle calls (in total, including those for line search) for convex and strongly convex settings, respectively
 --see Section~\ref{sec:without_dualbound}, where we show that even if $L_{xx}$ does not exist (possibly due to unbounded dual domain), \rev{APDB, i.e., the APD with backtracking,} generates a convergent primal-dual sequence without knowing any bound or Lipschitz constants of the problem.}

{\bf Organization of the Paper.} In the coming section, we precisely state our assumptions on $\cL$ in \eqref{eq:original-problem}, describe \sa{the proposed algorithms, APD and APD with backtracking~(APDB), and present convergence guarantees for APD and APDB iterate sequences, which are the main results of this paper.} Subsequently, in Section~\ref{sec:methodology}, we provide an easy-to-read convergence analysis proving the main results. \sa{Next, in Section~\ref{sec:constrained}, we discuss how APD and APDB can be implemented for solving constrained convex optimization problems.} Later, in Section~\ref{sec:numeric}, 
we apply our APD and APDB methods to solve the kernel matrix learning and \sa{QCQP problems to numerically compare APD and APDB with the {Mirror-prox} method
~\cite{he2015mirror}, PEGM~\cite{malitsky2018proximal}} and off-the-shelf interior point methods. Finally, Section~\ref{sec:conclude} concludes the paper.
\section{\rev{The} Accelerated Primal-Dual~\rev{(APD)} Algorithm}
\begin{defn}
\label{def:bregman}
{Let $\varphi_{\cX}:\cX\rightarrow\reals$ and $\varphi_{\cY}:\cY\rightarrow\reals$ be differentiable functions on open sets containing $\dom f$ and $\dom h$, respectively. Suppose $\varphi_{\cX}$ and $\varphi_{\cY}$ have closed domains and are 1-strongly convex with respect to $\norm{\cdot}_{\cX}$ and $\norm{\cdot}_{\cY}$, respectively. Let $\bD_{\cX}:\cX\times\cX\rightarrow\reals_+$ and $\bD_{\cY}:\cY\times\cY\rightarrow\reals_+$ be Bregman distance functions corresponding to $\varphi_{\cX}$ and $\varphi_{\cY}$,
i.e., $\bD_{\cX}(x,\bar{x})\triangleq \varphi_{\cX}(x)-\varphi_{\cX}(\bar{x})-\fprod{\grad \varphi_\cX(\bar{x}),x-\bar{x}}$, and $\bD_{\cY}$ \sa{is defined similarly 
using the same form.}
}
\end{defn}
Clearly, $\bD_\cX(x,\bar{x})\geq \tfrac{1}{2}\norm{x-\bar{x}}_\cX^2$ for $x\in\cX$, $\bar{x}\in\dom f$, and $\bD_{\cY}(y,\bar{y})\geq\tfrac{1}{2}\norm{y-\bar{y}}_\cY^2$ for $y\in\cY$ and $\bar{y}\in\dom h$. The dual spaces are denoted by $\cX^*$ and $\cY^*$. 
For $x'\in\cX^*$, we define the dual norm $\norm{x'}_{\cX^*}\triangleq\max\{\fprod{x',x}:\ \norm{x}_\cX\leq 1\}$, and $\norm{\cdot}_{\cY^*}$ is defined similarly.
{We next state {our main assumption and} 
explain the APD algorithm for 
\eqref{eq:original-problem}, and discuss its convergence properties 
as the main results of this paper.}

\begin{assumption}\label{assum} Suppose $\bD_{\cX}$ and $\bD_\cY$ be some Bregman distance functions as in Definition~\ref{def:bregman}.
In case {$f$ is strongly convex, i.e., $\mu>0$,} we fix $\norm{x}_\cX=\sqrt{\fprod{x,x}}$, 
and set $\bD_{\cX}(x,\bar{x})=\frac{1}{2}\norm{x-\bar{x}}_\cX^2$.

Suppose $f$ and $h$ are closed convex, 
and $\Phi$ is \sa{continuous} 
such that\\
{\bf (i)} for any 
\sa{$y\in\dom h\subset \cY$}, \sa{$\Phi(\cdot,y)$} is convex and differentiable, 
and {for some $L_{xx}\geq 0$,}
\begin{equation}
\label{eq:Lxx}
\norm{\grad_x \Phi(x,y)-\grad_x \Phi(\bar{x},y)}_{\cX^*}\leq L_{xx}\norm{x-\bar{x}}_\cX,\quad \forall~x,\bar{x}\in\dom f\subset \cX,
\end{equation}
{\bf (ii)} for any 
$x\in\dom f$, \sa{$\Phi(x,\cdot)$} is concave and differentiable; 
\sa{there exist $L_{yx}>0$ and $L_{yy}\geq 0$ such that for all $x,\bar{x}\in\dom f$ and $y,\bar{y}\in\dom h$, one has}
\begin{align}
\label{eq:Lipschitz_y}
\norm{\grad_y \Phi(x,y)-\grad_y \Phi(\bar{x},\bar{y})}_{\cY^*}\leq L_{yy}\norm{y-\bar{y}}_\cY+{L_{yx}}\norm{x-\bar{x}}_\cX.
\end{align}
\end{assumption}

\sa{
We first analyze the convergence properties of {APD}, displayed in Algorithm~\ref{alg:APD}, which repeatedly calls for the subroutine \textbf{MainStep} stated in Algorithm~\ref{alg:mainstep}.}
\begin{remark}
\label{rem:prox}
\sa{$x$- and $y$-subproblems of APD are generalizations of the Moreau map~\cite{hiriart2012fundamentals}. Compared to ours, i.e., $\argmin_{y \in \cY} \left\{ h(y)-\fprod{s,y}+\tfrac{1}{\sigma}\bD_{\cY}(y,\bar{y}) \right\}$,} HPE-type method in~\cite{kolossoski2017accelerated} requires solving $\argmax_{y\in Y} \Phi(\bar{x},y)-\bD_{\cY}(y,\bar{y})/\sigma$ as the $y$-subproblem for some given $\bar{x}$ and $\bar{y}$ where $Y\subset\cY$ is a bounded convex set. 
This may not be a trivial operation in general.
\vspace*{-2mm}
\end{remark}

\begin{algorithm}[h!]
   \caption{\textbf{MainStep}$(\bar{x},\bar{y},x_p,y_p,\tau,\sigma,\theta)$}
   \label{alg:mainstep}
\begin{algorithmic}[1]
   \STATE{\bfseries input: $\tau,\sigma,\theta>0$, $(\bar{x},\bar{y})\in\cX\times\cY$, $(x_p,y_p)\in\cX\times\cY$}
   		\STATE{$s\gets (1+\theta)\grad_y \Phi(\bar{x},\bar{y})-\theta\grad_y\Phi(x_p,y_p)$} \label{algeq:s}
   		\STATE{$\hat{y}\gets \argmin_{y\in\cY} h(y)-\fprod{s,~ y}+{\frac{1}{\sigma}}\bD_{\cY}(y,\bar{y})$} \label{algeq:y-main}
   		\STATE{$\hat{x}\gets \argmin_{x\in\cX} f({x})+\fprod{\grad_x\Phi(\bar{x},\hat{y}), ~x}+{\frac{1}{\tau}}\bD_{\cX}(x,\bar{x})$} \label{algeq:x-main}
   \RETURN $(\hat{x},\hat{y})$
\end{algorithmic}
\end{algorithm}
\begin{algorithm}[h!]
   \caption{Accelerated Primal-Dual algorithm~(APD)}
   \label{alg:APD}
\begin{algorithmic}[1]
   \STATE{\bfseries Input: 
   $\mu\geq 0$, 
   $\sa{\tau_0,\sigma_0>0}$, 
   $(x_0,y_0)\in\cX\times\cY$}
   \STATE{$(x_{-1},y_{-1})\gets(x_0,y_0)$, \sa{$\sigma_{-1}\gets\sigma_0$, $\gamma_0\gets \sigma_0/\tau_0$}}
   \FOR{$k\geq 0$}
        \STATE{\sa{$\sigma_{k}\gets \gamma_k\tau_{k}$,\quad $\theta_{k}\gets\frac{\sigma_{k-1}}{\sigma_k}$}}
        \STATE $(x_{k+1},y_{k+1})\gets\hbox{\textbf{MainStep}}(x_k,y_k,x_{k-1},y_{k-1},\tau_k,\sigma_k,\theta_k)$
        \STATE{\sa{$\gamma_{k+1}\gets \gamma_k(1+\mu\tau_k)$,\quad  $\tau_{k+1}\gets{\tau_k}\sqrt{\frac{\gamma_{k}}{\gamma_{k+1}}}$,\quad $k\gets k+1$}} \label{algeq:gamma_update_APD}
   \ENDFOR
\end{algorithmic}
\end{algorithm}

Recall that if {$f$ is convex with modulus $\mu\geq 0$}, then
{
\begin{align}\label{assum-sc}
f(x)\geq f(\bar{x})+\fprod{g,~x-\bar{x}}+\frac{\mu}{2}\norm{x-\bar{x}}_\cX^2,\quad \forall~g\in \partial f(\bar{x}),\quad \forall~x,\bar{x}\in\dom f.
\end{align}}%
Note also that \eqref{eq:Lxx} and convexity imply that for any $y\in\dom h$ and  $x,\bar{x}\in\dom f$,
{
\begin{align}
0 &\leq \Phi(x,y) - \Phi(\bar{x},y)-\fprod{\grad_x\Phi(\bar{x},y),~x-\bar{x}} \leq\frac{L_{xx}}{2}\norm{x-\bar{x}}_\cX^2.
\label{eq:Lxx_bound}
\end{align}}%

\begin{theorem}\label{thm:main} \sa{\bf (Main Result I)}
Let $\bD_{\cX}$ and $\bD_\cY$ be some Bregman distance functions as in Definition~\ref{def:bregman}. Step-size update rule in Algorithm~\ref{alg:APD} implies that \vspace*{-3mm}
\begin{align}\label{eq:step-size-rule}
\theta_{k+1}=\frac{1}{\sqrt{1+\mu\tau_k}},\quad \tau_{k+1}=\theta_{k+1}\tau_k,\quad \sigma_{k+1}=\sigma_k/\theta_{k+1},\quad\forall~k\geq 0.
\end{align}%
\vspace*{-3mm}

Suppose Assumption~\ref{assum} holds, and
$\{x_k,y_k\}_{k\geq 0}$ is 
generated by APD, stated in Algorithm~\ref{alg:APD}, starting from $\tau_0,\sigma_0>0$ such that
{\small
\begin{align}
\label{eq:initial_step_condition}
\Big(\frac{1-\delta}{\tau_0}-L_{xx}\Big)\frac{1}{\sigma_0}\geq \frac{{L^2_{yx}}}{c_\alpha},\quad\ 1-\big(\delta+c_\alpha+c_\beta\big)\geq \frac{L_{yy}^2}{c_\beta}\sigma^2_0,
\end{align}}%
for some $\delta, c_\alpha,\ c_\beta\in\reals_+$ such that $c_\alpha+c_\beta+\delta\leq 1$ satisfying $c_\alpha,~c_\beta> 0$ when $L_{yy}>0$, and $c_\alpha>0$, $c_\beta=0$ when $L_{yy}=0$.
Then for any $(x,y)\in\cX\times\cY$,\vspace*{-1mm}
\begin{align}
{\cL(\bar{x}_K,y)-\cL(x,\bar{y}_K) \leq} \frac{1}{T_K}{\Delta(x,y),\quad \Delta(x,y)\triangleq \frac{1}{\tau_0}\bD_{\cX}(x,x_0)+\frac{1}{\sigma_0}\bD_{\cY}(y,y_0),}\label{eq:delta}
\end{align}
holds for all $K\geq 1$, where $\bar{x}_K\triangleq\frac{1}{T_K}\sum_{k=0}^{K-1} t_k x_{k+1}$, $\bar{y}_K\triangleq\frac{1}{T_K}\sum_{k=0}^{K-1}t_ky_{k+1}$ and $T_K\triangleq\sum_{k=0}^{K-1}t_k$ for some \rev{$\{t_k\}_{k\geq 0}\subset\reals_{++}$ as stated in Parts I and II below:} 

\textbf{(Part I.)} {Suppose $\mu=0$, step-size rule in Algorithm~\ref{alg:APD} implies $\tau_k=\tau_0$, $\sigma_k=\sigma_0$ and $\theta_k=1$ for $k\geq 0$. Then \eqref{eq:delta} holds for $\{t_k\}$ such that $t_k=1$ for $k\geq 0$; hence, $T_K=K$.}
If a saddle point for \eqref{eq:original-problem} exists and
{$\delta>0$}, then 
$\{(x_k,y_k)\}_{k\geq 0}$ converges to a saddle point $(x^*,y^*)$ such that $0\leq \cL(\bar{x}_K,y^*)-\cL(x^*,\bar{y}_K) \leq \cO(1/K)$ and
{\small
\begin{align}
\label{eq:xy-bound-I}
{\gamma_0\bD_{\cX}(x^*,x_{K}) +[1-(c_\alpha+c_\beta)]\bD_{\cY}(y^*,y_{K}) \leq \sigma_0\Delta(x^*,y^*)}. 
\end{align}}%

\textbf{(Part II.)} Suppose $\mu>0$ and $L_{yy}=0$, in this setting let $\norm{x}_\cX=\sqrt{\fprod{x,x}}$, 
and $\bD_{\cX}(x,\bar{x})=\frac{1}{2}\norm{x-\bar{x}}_\cX^2$.
{Then \eqref{eq:delta} holds for $\{t_k\}$ such that $t_k=\sigma_k/\sigma_0$ for $k\geq 0$, and $T_K 
=\Theta(K^2)$.}\footnote{$f(k)=\Theta(k)$ means $f(k)=\cO(k)$ and $f(k)=\Omega(k)$.}
{If a saddle point for \eqref{eq:original-problem}
exists,\footnote{\rev{Since $\mu>0$, all saddle points share the same unique $x$-coordinate, say $x^*\in\cX$.}} then $\{x_k\}_{k\geq 0}$ converges to $x^*$ and $\{y_k\}$ has a limit point. Moreover, if $\delta>0$, then any limit point $(x^*,y^*)$ 
 is a saddle point and it holds that $0\leq \cL(\bar{x}_K,y^*)-\cL(x^*,\bar{y}_K) \leq \cO(1/K^2)$ and}
 {\small
 \begin{align}
 \label{eq:xy-bound-II}
 \gamma_K\bD_{\cX}(x^*,x_{K}) +(1-c_\alpha)\bD_{\cY}(y^*,y_{K}) \leq \sigma_0
\Delta(x^*,y^*) 
\end{align}}%
holds with $\gamma_K=\Omega(K^2)$, which implies that $\bD_{\cX}(x^*,x_{K})=\cO(1/K^2)$.
\end{theorem}
\begin{proof}
See Section~\ref{sec:main_thm_proof} for the proof of \rev{the main result I}.
\end{proof}
\begin{remark}
\sa{The particular choice of initial step-sizes, $\tau_0= c_\tau(L_{xx}+L_{yx}^2/\alpha)^{-1}$ and $\sigma_0= c_\sigma(\alpha+2L_{yy})^{-1}$ for any $\alpha>0$ and $c_\tau,\ c_\sigma\in(0,1]$, satisfies \eqref{eq:initial_step_condition}.}
\end{remark}
\begin{remark}
\label{rem:CP-step-size-cond}
\sa{The requirement in \eqref{eq:initial_step_condition} generalizes the step-size condition in~\cite{chambolle2016ergodic} for \eqref{eq:CP-problem} with $\hat{f}(x)=f(x)+g(x)$ such that $f$ is closed convex and $g$ is convex having Lipschitz continuous gradient with constant $L_g$. It is required in~\cite{chambolle2016ergodic} that $\left(\frac{1}{\tau_0}-L_g\right)\frac{1}{\sigma_0}\geq \norm{K}^2$. For \eqref{eq:CP-problem}, $\Phi(x,y)=g(x)+\fprod{Kx,y}$; hence, $L_{xx}=L_g$, $L_{yx}=\norm{K}$ and $L_{yy}=0$. Note when $L_{yy}=0$, the second condition in~\eqref{eq:initial_step_condition} holds for all $\sigma_0>0$; thus, setting $c_\alpha=1$, $c_\beta=0$ and $\delta=0$, \eqref{eq:initial_step_condition} reduces to the condition in~\cite{chambolle2016ergodic}.}

\sa{Moreover, when $f$ is strongly convex, it is shown in \cite{chambolle2016ergodic} that $\tau_0=\frac{1}{2L_g}$ and $\sigma_0=\frac{L_g}{\norm{K}^2}$ can be used to achieve an accelerated rate of $\cO(1/K^2)$ -- see~\cite[Section~5.2]{chambolle2016ergodic}. Note since $L_{yy}=0$, setting $c_\alpha=1$, $c_\beta=0$ and $\delta=0$, we see that $\tau_0=\frac{1}{2L_{xx}}$ and $\sigma_0=\frac{L_{xx}}{L_{yx}^2}$ satisfies \eqref{eq:initial_step_condition}, and these initial step-sizes are the same as those in~\cite[Section~5.2]{chambolle2016ergodic}.}
\end{remark}
\begin{remark}
\label{rem:differentiability}
As in~\cite{kolossoski2017accelerated}, assuming a stronger oracle we can remove 
\sa{the assumption of $\grad_x\Phi(\cdot,y)$ being Lipschitz for each $y$, i.e., if we replace Line~\ref{algeq:x-main} of \textbf{MainStep} with $\argmin_x f(x)+\Phi(x,\hat{y})+D_{\cX}(x,\bar{x})/\tau$,} then we can remove assumption in \eqref{eq:Lxx}; hence, even if $\Phi$ is nonsmooth in $x$, all our rate results will continue to hold. Let $\Phi(x,y)=x^2y$ on $x\in[-1,1]$ and $y\geq 0$, \sa{i.e., $f(x)=\ind{[-1,1]}(x)$ and $h(y)=\ind{\reals_+}(y)$}; the Lipschitz constant $L$ for $\grad \Phi$ would not exist in this case, and it is not clear how one can modify the analysis of~\cite{he2015mirror} to deal with problems when $\Phi$ is not jointly differentiable.\vspace*{-4mm}
\end{remark}
\rev{\begin{remark}
\label{rem:MP-APD}
{At each iteration $k\geq 0$,} our proposed algorithm, APD, {requires computing} only one pair of primal-dual gradients, i.e., $\grad_x\Phi(x_{k+1},y_k)$ and $\grad_y\Phi(x_k,y_k)$ -- note that $\grad_y\Phi(x_{k-1},y_{k-1})$ required for iteration $k$ can be retrieved from iteration $k-1$. However, Mirror-prox~{(MP)} in~\cite{he2015mirror} uses \emph{two} pairs of primal-dual gradients at each iteration $k\geq 0$. 
{Thus, for a total of $K\geq 1$ iterations, while APD uses $K$ pairs of primal-dual gradients, Mirror-prox uses $2K$ pairs.} 
{Next we compare the iteration complexity of the two methods}. 
Consider the problem \eqref{eq:original-problem} and 
suppose $f(\cdot)=\ind{X}(\cdot)$ and $h(\cdot)=\ind{Y}(\cdot)$ for some compact convex sets $X\subset\cX$ and $Y\subset\cY$. Let $Z\triangleq X\times Y$, and $L$ be the global Lipschitz constant of {$\grad\Phi(\cdot,\cdot)$ over $Z$}. Moreover, let $z=[x^\top;y^\top]^\top$ and define $\bD_Z(z,\bar{z})\triangleq \bD_X(x,\bar{x})+\bD_Y(y,\bar{y})$, for any $z,\bar{z}\in Z$. After $K\geq 1$ iterations, MP~\cite{he2015mirror} can bound $\cG(\bar{z}_K)\triangleq\sup_{(x,y)\in X\times Y} (\Phi(\bar{x}_K,y)-\Phi(x,\bar{y}_K))$ as $\cG(\bar{z}_K)\leq R_{\rm MP}(K)\triangleq\frac{L}{K}\sup_{z\in Z} {\bD_Z(z,z_0)}$; in comparison, APD guarantees that
{\small
\begin{align*}
    &\cG(\bar{z}_K)\leq R_{\rm APD}(K)\triangleq\frac{1}{K}\sup_{(x,y)\in X\times Y}\{(L_{xx}+L_{yx}){\bD_X(x,x_0)}+(2L_{yy}+L_{yx}){\bD_Y(y,y_0)}\}.
\end{align*}}%
Note that from  \eqref{eq:Lxx} and  \eqref{eq:Lipschitz_y} one can conclude that 
{$\grad\Phi$ is Lipschitz with $L=L_{xx}+2L_{yx}+L_{yy}$. Thus, 
when $L_{yy}=0$, which is the case for constrained convex optimization problems (as the Lagrangian is an affine function of the dual variable), {MP} bound is larger than the APD bound, i.e., $R_{\rm APD}(K)\leq R_{\rm MP}(K)$ for $K\geq 1$}. Therefore, in this setting, for any $\epsilon>0$, the number of primal-dual gradient calls required by MP to ensure $R_{\rm MP}(K)\leq \epsilon$ is at least \emph{twice} the number of APD primal-dual gradient calls to ensure the same accuracy $R_{\rm APD}(K)\leq \epsilon$. 
\end{remark}}
Our method generalizes the primal-dual method proposed by~\cite{chambolle2016ergodic} to solve SP problems with coupling term $\Phi$ that is \emph{not} bilinear. 
According to Remark~\ref{rem:CP-step-size-cond}, \rev{to solve \eqref{eq:CP-problem},} any $\tau_0,\sigma_0>0$ such that $\left(\frac{1}{\tau_0}-L_g\right)\frac{1}{\sigma_0}\geq \norm{K}^2$ work for both Algorithm~1 in~\cite{chambolle2016ergodic} and APD, and both methods generate the same iterate sequence with same error bounds.
Similarly, from Remark~\ref{rem:CP-step-size-cond}, in case $f$ is strongly convex, 
when $\{(\tau_k,\sigma_k,\theta_k)\}$ is chosen as
\sa{in~\eqref{eq:step-size-rule} starting from $\tau_0=\frac{1}{2L_g}$ and $\sigma_0=\frac{L_g}{\norm{K}^2}$}, our APD algorithm and Algorithm~4 in~\cite{chambolle2016ergodic} again output the same iterate sequence with the same error bounds. Therefore, APD algorithm inherits the already established connections of the primal-dual framework in~\cite{chambolle2016ergodic} to other well-known methods, e.g., {(linearized) ADMM~\cite{shefi2014rate,aybat2018distributed} and Arrow-Hurwicz method \cite{arrow1958studies}.}
\begin{algorithm}[h!]
   \caption{Accelerated Primal-Dual algorithm with Backtracking~(APDB)}
   \label{alg:APDB}
\begin{algorithmic}[1]
   \STATE{\bfseries Input: 
   $(x_0,y_0)\in\cX\times\cY$, $\mu\geq 0$, \sa{$c_\alpha,c_\beta,\delta\geq 0$}, $\eta\in (0,1)$, 
   $\sa{\bar{\tau},\gamma_0>0}$}
   \STATE{$(x_{-1},y_{-1})\gets(x_0,y_0)$, \sa{$\tau_0\gets\bar{\tau}$, $\sigma_{-1}\gets\gamma_0\tau_0$}}
   \FOR{$k\geq 0$}
        \LOOP
        \STATE{\sa{$\sigma_{k}\gets \gamma_k\tau_{k}$,\quad $\theta_{k}\gets\frac{\sigma_{k-1}}{\sigma_k}$},\quad \sa{$\alpha_{k+1}\gets c_\alpha/\sigma_k$,\quad $\beta_{k+1}\gets c_\beta/\sigma_k$}}
        \STATE{$(x_{k+1},y_{k+1})\gets\hbox{\textbf{MainStep}}(x_k,y_k,x_{k-1},y_{k-1},\tau_k,\sigma_k,\theta_k)$}\label{algeq:APDB-mainstep}
        \IF{\sa{$E_k(x_{k+1},y_{k+1})\leq -\frac{\delta}{\tau_k}\bD_\cX(x_{k+1},x_k)-\frac{\delta}{\sigma_k}\bD_\cY(y_{k+1},y_k)$}}\label{algeq:test_function}
        \STATE{\textbf{go to} Line 13}
        \ELSE
   		\STATE{\sa{$\tau_{k}\gets \eta \tau_k$}}
   		\ENDIF
        \ENDLOOP
        \STATE{\sa{$\gamma_{k+1}\gets \gamma_k(1+\mu\tau_k)$},\quad \sa{$\tau_{k+1}\gets{\tau_k}\sqrt{\frac{\gamma_{k}}{\gamma_{k+1}}}$},\quad \sa{$k\gets k+1$}} \label{algeq:gamma_update_APDB} 
   \ENDFOR
\end{algorithmic}
\end{algorithm}
%

\sa{For some problems, it may either be hard to guess\rev{/know} the Lipschitz constants, $L_{xx}$, $L_{yx}$ and $L_{yy}$, or using these constants may well lead to too conservative step-sizes. Next, inspired by the work~\cite{malitsky2018first}, we propose a backtracking scheme to approximate the Lipschitz constants \emph{locally} and incorporate it within the APD framework as shown in Algorithm~\ref{alg:APDB}, which we call {APDB}.}
\sa{To check whether the step-sizes chosen at each iteration $k\geq 0$ \rev{are} in accordance with \emph{local} Lipschitz constants, we define a test function $E_k(\cdot,\cdot)$ that employs linearization of $\Phi$ with respect to both $x$ and $y$.} 

\sa{For some free parameter sequence $\{\alpha_k,\beta_k\}_{k\geq 0}\subseteq \mathbb{R}_{+}$, we define}\vspace*{-3mm}
\sa{\small
\begin{align}\label{eq:Ek}
E_k(x,y)\triangleq &\Phi(x,y)-\Phi(x_k,y)-\fprod{\grad_x\Phi(x_k,y),x-x_k}-\frac{1}{\tau_k}\bD_\cX(x,x_k)\nonumber\\
&+\tfrac{1}{2\alpha_{k+1}}\norm{\grad_y\Phi(x,y)-\grad_y\Phi(x_k,y)}^2+\tfrac{1}{2\beta_{k+1}}\norm{\grad_y\Phi(x_k,y)-\grad_y\Phi(x_k,y_k)}^2 \nonumber\\
&-\Big(\frac{1}{\sigma_k}-\theta_k(\alpha_k+\beta_k)\Big)\bD_\cY(y,y_k),
\end{align}}%
where we set $0^2/0=0$ which may arise when $L_{yy}=0$ and $\beta_k=0$. \sa{For \rev{the} \emph{particular} $\alpha_k,\beta_k\geq 0$ and $\theta_k$ specified as in Algorithm~\ref{alg:APDB}, we get $E_k(x,y)\leq (L_{xx}+\frac{L_{yx}^2}{c_\alpha}\sigma_k-\frac{1}{\tau_k})\bD_\cX(x,x_k)+(\frac{L_{yy}^2}{c_\beta}\sigma_k+\frac{c_\alpha+c_\beta-1}{\sigma_k})\bD_\cY(y,y_k)$; hence, $E_k$ can be bounded by using the global Lipschitz constants, which prescribes how $\{\tau_k,\sigma_k\}$ should be chosen for APDB so that the test condition in Line~\ref{algeq:test_function} of Algorithm~\ref{alg:APDB} is satisfied.}

The rate statement and convergence result of the 
APDB method, displayed in Algorithm~\ref{alg:APDB}, are given in the next theorem.

\begin{theorem}\label{thm:backtrack} \sa{\bf (Main Result II)}
Suppose Assumption~\ref{assum} holds. Let $\delta\in[0,1)$, $c_\alpha>0$ and $c_\beta\geq 0$ are chosen as stated below, and define
{\small
\begin{align}
	\label{eq:tauhat_bound}
	\Psi_1\triangleq\frac{c_\alpha L_{xx}}{2\gamma_0L_{yx}^2}\zeta,\ \Psi_2\triangleq \frac{\sqrt{c_\beta(1-(c_\alpha+c_\beta+\delta))}}{\gamma_0L_{yy}},\ \zeta\triangleq -1+\sqrt{1+\frac{4(1-\delta)\gamma_0}{c_\alpha }\frac{L_{yx}^2}{L_{xx}^2}}.
\end{align}}%
For any given $(x_0,y_0)\in\dom f\times \dom h$ and $\bar{\tau},\gamma_0>0$, APDB, stated in Algorithm \ref{alg:APDB}, is well-defined, i.e., the number of inner iterations is finite and bounded by {$1+\log_{1/\eta}(\frac{\bar{\tau}}{\Psi})$} uniformly for $k\geq 0$ for some $\Psi>0$. Let $\{x_k,y_k\}_{k\geq 0}$ denote the iterate sequence generated by APDB, 
using the test function $E_k$ in \eqref{eq:Ek}. 
Then {for any $(x,y)\in\cX\times\cY$}, \eqref{eq:delta} holds for $\{t_k\}$ such that {$t_k=\sigma_k/\sigma_0$} for $k\geq 0$, where $(\bar{x}_K,\bar{y}_K)$ and $T_K$ are defined for $K\geq 1$ as in Theorem~\ref{thm:main}.\\
\textbf{(Part I.)} Suppose $\mu=0$.
\rev{Let} $c_\alpha+c_\beta+\delta\in(0,1)$ 
if $L_{yy}>0$; and $c_\beta=0$, $c_\alpha+\delta\in(0,1]$, otherwise. 
For this setting, {$\Psi=\Psi_1$ if $L_{yy}=0$ and $\Psi=\min\{\Psi_1,\Psi_2\}$ if $L_{yy}>0$}; moreover, $T_K 
=\Omega(K)$, implying $\cO(1/K)$ sublinear rate for \eqref{eq:delta}.
Moreover, if a saddle point for \eqref{eq:original-problem} exists and
{$\delta>0$ is chosen}, then 
$\{(x_k,y_k)\}_{k\geq 0}$ converges to a saddle point $(x^*,y^*)$ such that \eqref{eq:xy-bound-I} holds.\\
\textbf{(Part II.)} Suppose $\mu>0$ and $L_{yy}=0$. 
Let $\norm{x}_\cX=\sqrt{\fprod{x,x}}$, 
and $\bD_{\cX}(x,\bar{x})=\frac{1}{2}\norm{x-\bar{x}}_\cX^2$.
Assume $c_\alpha+\delta\in(0,1]$ and $c_\beta=0$.
For this setting, {$\Psi=\Psi_1$} and $T_K=\Omega(K^2)$, implying $\cO(1/K^2)$ sublinear rate for \eqref{eq:delta}. If a saddle point 
for \eqref{eq:original-problem}
exists,\footnote{\rev{Since $\mu>0$, all saddle points share the same unique $x$-coordinate, say $x^*\in\cX$.}} then $\{x_k\}$ converges to $x^*$ and $\{y_k\}$ has a limit point. Moreover, if $\delta>0$, then any limit point $(x^*,y^*)$ is a saddle point satisfying $0\leq \cL(\bar{x}_K,y^*)-\cL(x^*,\bar{y}_K) \leq \cO(1/K^2)$ for $K\geq 1$, and \eqref{eq:xy-bound-II} holds with $\gamma_K=\Omega(K^2)$.
\end{theorem}
\section{Methodology}
\label{sec:methodology}
\vspace*{-2mm}
\sa{The most generic form of our method, {GAPD}, is presented in Algorithm~\ref{alg:GAPD} which takes step-size sequences as input and repeatedly calls for the subroutine \textbf{MainStep} stated in Algorithm~\ref{alg:mainstep}.} In this section, we provide a general result \sa{in Theorem~\ref{thm:general-bound} for GAPD} 
unifying \sa{the analyses of merely and strongly {convex} cases described in Theorems~\ref{thm:main} and~\ref{thm:backtrack}}. An easy-to-read convergence analysis is given at the end of this section.
In our analysis, \sa{to show the general result in Theorem~\ref{thm:general-bound}}, we assume some conditions on $\{(\tau_k,\sigma_k,\theta_k)\}_{k\geq 0}$, stated in Assumption~\ref{assum:step}; \rev{furthermore, we also discuss in this section that when the Lipschitz constants are known, one can replace Assumption~\ref{assum:step} with Assumption~\ref{assum:step-2}, as it implies Assumption~\ref{assum:step}.} \sa{Later we show that step-size sequence $\{(\tau_k,\sigma_k,\theta_k)\}_k$ 
generated by APD and APDB, i.e., Algorithms~\ref{alg:APD} and~\ref{alg:APDB}, satisfies these conditions in \rev{Assumptions~\ref{assum:step-2} and~\ref{assum:step}}, respectively.} We define $0^2/0=0$ which may arise when $L_{yy}=0$.

\sa{To make the notation tractable, we define some quantities now:} for $k\geq 0$, let $q_k\triangleq \grad_y\Phi(x_k,y_k)-\grad_y\Phi(x_{k-1},y_{k-1})$ 
and $s_k \sa{\triangleq} \grad_y\Phi(x_k,y_k)+\theta_kq_k$ -- \sa{see Line~\ref{algeq:s} of MainStep in Algorithm~\ref{alg:mainstep} and Line~\ref{algeq:mainstep-GAPD} of GAPD in Algorithm~\ref{alg:GAPD}}.
\sa{
\begin{assumption}
\label{assum:step}
({\bf Step-size Condition I}) 
There exists $\{\tau_k,\sigma_k,\theta_k\}_{k\geq 0}$ such that $\{(x_k,y_k)\}_{k\geq 0}$ generated by GAPD, displayed in Algorithm~\ref{alg:GAPD}, and the step-size sequence together satisfy the following conditions 
for $k\geq 0$:
\begin{subequations}\label{eq:step-size-condition}
{\small
\begin{align}
& E_k(x_{k+1},y_{k+1})\leq \sa{-\delta \Big[\bD_\cX(x_{k+1},x_k)/\tau_k+\bD_\cY(y_{k+1},y_k)/\sigma_k\Big]}, \label{eq:step-size-condition-Ek}\\
& {t_k\big(\frac{1}{\tau_k}+\mu\big)\geq \frac{t_{k+1}}{\tau_{k+1}}},\quad {\frac{t_k}{\sigma_k}\geq \frac{t_{k+1}}{\sigma_{k+1}}},\quad \frac{t_k}{t_{k+1}}=\theta_{k+1}, \label{eq:step-size-condition-theta}
\end{align}}%
\end{subequations}
{for some positive $\{t_k,~\alpha_k\}_{k\geq 0}$ such that $t_0=1$, nonnegative $\{\beta_k\}_{k\geq 0}$} and $\delta\in[0,1)$, where $E_k(\cdot,\cdot)$ is defined in~\eqref{eq:Ek} using $\{\alpha_k,\beta_k,\theta_k\}$ as above. 
\end{assumption}}%
\sa{
\begin{assumption}
\label{assum:step-2}
({\bf Step-size Condition II}) 
For any $k\geq 0$, the step-sizes $\tau_k,\sigma_k$ and 
momentum parameter $\theta_k$ satisfy $\theta_0=1$, \eqref{eq:step-size-condition-theta} and
{\small
\begin{align}
&\frac{1-\delta}{\tau_k}\geq L_{xx}+\frac{{L^2_{yx}}}{\alpha_{k+1}},\quad\ \frac{1-\delta}{\sigma_k} \geq \theta_k(\alpha_k+\beta_k)+\frac{L^2_{yy}}{\beta_{k+1}}, \label{eq:step-size-condition-pd}
\end{align}}%
{for some positive $\{t_k,~\alpha_k\}_{k\geq 0}$ such that $t_0=1$, nonnegative $\{\beta_k\}_{k\geq 0}$, and $\delta\in[0,1)$}. 
\end{assumption}}%
\begin{algorithm}[h!]
	\caption{\sa{Generic Accelerated Primal-Dual algorithm~(GAPD)}}
	\label{alg:GAPD}
	\begin{algorithmic}[1]
		\STATE{\bfseries Input: 
			$\{\tau_k,\sigma_k,\theta_k\}_{k\geq 0}$, $(x_0,y_0)\in\cX\times\cY$}
		\STATE{$(x_{-1},y_{-1})\gets(x_0,y_0)$}
		\FOR{$k\geq 0$}
		\STATE $(x_{k+1},y_{k+1})\gets\hbox{\textbf{MainStep}}(x_k,y_k,x_{k-1},y_{k-1},\tau_k,\sigma_k,\theta_k)$ \label{algeq:mainstep-GAPD}
		\ENDFOR
	\end{algorithmic}
\end{algorithm}
\subsection{Auxiliary Results} \sa{In this section, we investigate some sufficient conditions on step-size sequence $\{\tau_k,\sigma_k,\theta_k\}$ and the parameter sequence $\{\alpha_k,\beta_k,t_k\}$ that can guarantee some desirable convergence properties for GAPD. Both APD and APDB, {with the iterate and the step-size sequences generated 
as in Algorithms~\ref{alg:APD} and~\ref{alg:APDB}, respectively,} are particular cases of the GAPD algorithm; hence, we later establish our main results in Sections~\ref{sec:main_thm_proof} and~\ref{sec:main_thm_proof_backtrack} using the results of this section.}
\begin{theorem}\label{thm:general-bound}
Suppose Assumption~\ref{assum} holds, and
$\{x_k,y_k\}_{k\geq 0}$ is 
generated by GAPD stated in Algorithm~\ref{alg:GAPD}
using a parameter sequence $\{\tau_k,\sigma_k,\theta_k\}_{k\geq 0}$ that satisfies Assumption~\ref{assum:step}.
Then for any $(x,y)\in\cX\times\cY$ and $K\geq 1$, 
\begin{eqnarray}\label{eq:rate}
\lefteqn{\cL(\bar{x}_{K},y)-\cL(x,\bar{y}_{K})\leq}  \\
& & 
\frac{1}{T_K}{\Delta(x,y)}-\frac{t_K}{T_K}\Big[\frac{1}{\tau_{K}}\bD_{\cX}(x,x_{K}) +\Big(\frac{1}{\sigma_K}-\theta_K(\alpha_K+\beta_K)\Big)\bD_{\cY}(y,y_{K}) \Big], \nonumber
\end{eqnarray}
where $\Delta(x,y)$ is defined in \eqref{eq:delta}, {$T_K$ and $(\bar{x}_K,\bar{y}_K)$ are defined in Theorem~\ref{thm:main}}.
\end{theorem}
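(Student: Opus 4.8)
The plan is to run the standard primal--dual potential argument: establish a one-step inequality for the weighted Lagrangian gap $\cL(x_{k+1},y)-\cL(x,y_{k+1})$ and then sum it with the weights $\{t_k\}$. First I would extract the Bregman three-point inequalities from the two subproblems in \textbf{MainStep}. Since $y_{k+1}$ minimizes $h(y)-\fprod{s_k,y}+\tfrac{1}{\sigma_k}\bD_\cY(y,y_k)$, the three-point property gives, for every $y$,
$$h(y_{k+1})-h(y)\leq \fprod{s_k,\,y_{k+1}-y}+\tfrac{1}{\sigma_k}\big[\bD_\cY(y,y_k)-\bD_\cY(y,y_{k+1})-\bD_\cY(y_{k+1},y_k)\big];$$
similarly, since $x_{k+1}$ minimizes $f(x)+\fprod{\grad_x\Phi(x_k,y_{k+1}),x}+\tfrac{1}{\tau_k}\bD_\cX(x,x_k)$ with $f$ being $\mu$-strongly convex, I would obtain for every $x$,
$$f(x_{k+1})-f(x)\leq \fprod{\grad_x\Phi(x_k,y_{k+1}),\,x-x_{k+1}}+\tfrac{1}{\tau_k}\big[\bD_\cX(x,x_k)-\bD_\cX(x,x_{k+1})-\bD_\cX(x_{k+1},x_k)\big]-\tfrac{\mu}{2}\norm{x-x_{k+1}}_\cX^2.$$

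Next I would turn these into a bound on the gap. Writing it as $[f(x_{k+1})-f(x)]+[h(y_{k+1})-h(y)]+[\Phi(x_{k+1},y)-\Phi(x,y_{k+1})]$ and splitting the coupling as $\Phi(x_{k+1},y)-\Phi(x,y_{k+1})=\big[\Phi(x_{k+1},y)-\Phi(x_{k+1},y_{k+1})\big]+\big[\Phi(x_{k+1},y_{k+1})-\Phi(x,y_{k+1})\big]$, I would bound the first bracket by concavity in $y$, namely $\leq\fprod{\grad_y\Phi(x_{k+1},y_{k+1}),y-y_{k+1}}$, and the second by convexity in $x$ together with the quadratic upper bound \eqref{eq:Lxx_bound}, which lets me replace the true gradient $\grad_x\Phi(x_{k+1},y_{k+1})$ by the computed $\grad_x\Phi(x_k,y_{k+1})$ at the price of the primal linearization error that will reappear in $E_k$. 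The crucial step is substituting the dual surrogate $s_k=\grad_y\Phi(x_k,y_k)+\theta_k q_k$ for the true gradient, producing the residual $\fprod{\grad_y\Phi(x_{k+1},y_{k+1})-s_k,\,y-y_{k+1}}=\fprod{q_{k+1}-\theta_k q_k,\,y-y_{k+1}}$, where $q_{k+1}=\grad_y\Phi(x_{k+1},y_{k+1})-\grad_y\Phi(x_k,y_k)$.

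I would then split this residual in the Chambolle--Pock fashion as $\fprod{q_{k+1},y-y_{k+1}}-\theta_k\fprod{q_k,y-y_k}-\theta_k\fprod{q_k,y_k-y_{k+1}}$, isolating a telescoping part and a cross term. Decomposing $q_k$ into its $x$- and $y$-increments and applying Young's inequality with the free parameters $\alpha_k,\beta_k$ together with the Lipschitz bounds \eqref{eq:Lipschitz_y}, the cross term $-\theta_k\fprod{q_k,y_k-y_{k+1}}$ is absorbed so that all error contributions assemble \emph{exactly} into $E_k(x_{k+1},y_{k+1})$ as defined in \eqref{eq:Ek}; this is where the form of $E_k$ is earned. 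Invoking \eqref{eq:step-size-condition-Ek} then yields $E_k(x_{k+1},y_{k+1})\leq -\delta[\bD_\cX(x_{k+1},x_k)/\tau_k+\bD_\cY(y_{k+1},y_k)/\sigma_k]\leq 0$, leaving a clean one-step inequality containing only telescoping Bregman terms and the partial $q$-telescope.

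Finally, I would multiply by $t_k$ and sum over $k=0,\dots,K-1$. The conditions $t_k(\tfrac{1}{\tau_k}+\mu)\geq\tfrac{t_{k+1}}{\tau_{k+1}}$ and $\tfrac{t_k}{\sigma_k}\geq\tfrac{t_{k+1}}{\sigma_{k+1}}$ from \eqref{eq:step-size-condition-theta} make the weighted $\bD_\cX(x,\cdot)$ and $\bD_\cY(y,\cdot)$ terms telescope (with $\mu$ absorbing the strong-convexity penalty), leaving the initial terms that form $T_K^{-1}\Delta(x,y)$ (recall $t_0=1$) and the terminal terms $-\tfrac{t_K}{T_K}[\tfrac{1}{\tau_K}\bD_\cX(x,x_K)+\tfrac{1}{\sigma_K}\bD_\cY(y,y_K)]$; meanwhile $t_k/t_{k+1}=\theta_{k+1}$ makes the weighted $q$-terms telescope, leaving a single terminal residual of the form $t_{K-1}\fprod{q_K,y-y_K}$ that I would bound by Young's inequality into $\bD_\cY(y,y_K)$, producing the $-\theta_K(\alpha_K+\beta_K)$ correction on the terminal dual coefficient in \eqref{eq:rate}. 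Dividing by $T_K$ and applying Jensen's inequality---using convexity of $\cL(\cdot,y)$ and concavity of $\cL(x,\cdot)$---to pass from $\tfrac{1}{T_K}\sum_k t_k[\cL(x_{k+1},y)-\cL(x,y_{k+1})]$ to $\cL(\bar{x}_K,y)-\cL(x,\bar{y}_K)$ delivers \eqref{eq:rate}. The main obstacle I anticipate is precisely the index bookkeeping of the third and fourth paragraphs: aligning the momentum $\theta_{k+1}=t_k/t_{k+1}$ with the weights so the $q$-terms telescope, and routing the Young's-inequality splits of the cross term into the exact coefficients $\tfrac{1}{2\alpha_{k+1}},\tfrac{1}{2\beta_{k+1}}$ and the $\theta_k(\alpha_k+\beta_k)$ penalty defining $E_k$, so that nothing beyond the advertised boundary terms survives.
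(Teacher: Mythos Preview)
Your outline matches the paper's proof almost step for step. Two clarifications on the point you flag as the main obstacle. First, the Young split of the cross term $\theta_k\fprod{q_k,y_{k+1}-y_k}$ yields $\theta_k(\alpha_k+\beta_k)\bD_\cY(y_{k+1},y_k)$ (which does land in $E_k$) together with $\tfrac{\theta_k}{2\alpha_k}\norm{p_k^x}_{\cY^*}^2+\tfrac{\theta_k}{2\beta_k}\norm{p_k^y}_{\cY^*}^2$, which carry index $k$, not $k{+}1$, and therefore do \emph{not} go into $E_k$; the paper instead parks these in the telescoping quantity $Q_k(z)$, adds the index-$(k{+}1)$ norm terms $\tfrac{1}{2\alpha_{k+1}}\norm{p_{k+1}^x}_{\cY^*}^2+\tfrac{1}{2\beta_{k+1}}\norm{p_{k+1}^y}_{\cY^*}^2$ artificially to both $E_k$ and $R_{k+1}(z)$, and then checks $t_{k+1}Q_{k+1}(z)\leq t_kR_{k+1}(z)$ (the norm pieces match because $t_{k+1}\theta_{k+1}=t_k$). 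At the terminal step, these carried negative norm terms exactly cancel the residual of the final Young bound on $\fprod{q_K,y-y_K}$, leaving only the $\theta_K(\alpha_K+\beta_K)\bD_\cY(y,y_K)$ correction. Second, the proof of Theorem~\ref{thm:general-bound} never invokes the Lipschitz bounds \eqref{eq:Lipschitz_y}: it uses only Assumption~\ref{assum:step}, i.e., $E_k(x_{k+1},y_{k+1})\leq 0$ as a black box; the Lipschitz constants enter only later, in Lemma~\ref{lem:stronger-step-condition}, to show Assumption~\ref{assum:step-2} implies Assumption~\ref{assum:step}.
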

\begin{proof}
For $k\geq0$, using Lemma \ref{lem_app:prox} in the appendix for the $y$- and $x$-subproblems in Algorithm~\ref{alg:GAPD} we get two inequalities that hold for any $y\in\cY$ and $x\in\cX$:
\begin{align}
&h(y_{k+1})-\fprod{s_k,~ y_{k+1}-y}\label{eq:sc-h}\\
&\qquad\leq h(y)
+\frac{1}{\sigma_k}\Big[\bD_{\cY}(y,y_k)-{\bD_{\cY}(y,y_{k+1})}-\bD_{\cY}(y_{k+1},y_k)\Big], \nonumber
\end{align}
\vspace{-5mm}
\begin{align}
&f(x_{k+1})+\fprod{\grad_{{x}}\Phi(x_k,y_{k+1}),x_{k+1}-x}+\frac{\mu}{2}\norm{x-x_{k+1}}_\cX^2 \label{eq:sc-f}\\
&\qquad\leq f(x)
+\frac{1}{\tau_k}\Big[\bD_{\cX}(x,x_k)-{\bD_{\cX}(x,x_{k+1})}-\bD_{\cX}(x_{k+1},x_k)\Big]. \nonumber
\end{align}

For all $k\geq 0$, let $A_{k+1}\triangleq \frac{1}{\sigma_k}\bD_{\cY}(y,y_k)-\frac{1}{\sigma_k}{\bD_{\cY}(y,y_{k+1})} -\frac{1}{\sigma_k}\bD_{\cY}(y_{k+1},y_k)$ and $B_{k+1}\triangleq \frac{1}{\tau_k}\bD_{\cX}(x,x_k)-\frac{1}{\tau_k}{\bD_{\cX}(x,x_{k+1})}-\frac{1}{\tau_k}\bD_{\cX}(x_{k+1},x_k)-{\frac{\mu}{2}\norm{x-x_{k+1}}_\cX^2}$.
The inner product in \eqref{eq:sc-f} can be lower bounded using convexity of $\Phi(x,y_{k+1})$ in $x$
as follows:
\begin{align*}
\fprod{\grad_{{x}}\Phi(x_k,y_{k+1}),x_{k+1}-x}=&\fprod{\grad_{{x}}\Phi(x_k,y_{k+1}),x_k-x}+\fprod{\grad_{{x}}\Phi(x_k,y_{k+1}),x_{k+1}-x_k}\\
\geq& \Phi(x_k,y_{k+1})-\Phi(x,y_{k+1})+\fprod{\grad_{{x}}\Phi(x_k,y_{k+1}),x_{k+1}-x_k}.
\end{align*}
Using this inequality after adding $\Phi(x_{k+1},y_{k+1})$ to both sides of \eqref{eq:sc-f}, we get
\begin{align}\label{eq:convex-Lip-f}
f(x_{k+1})+\Phi(x_{k+1},y_{k+1})\leq& f(x)+\Phi(x,y_{k+1})+B_{k+1}+\sa{\Lambda_k},
\end{align}%
where $\Lambda_k \triangleq \Phi(x_{k+1},y_{k+1})-\Phi(x_k,y_{k+1})-\fprod{\grad_{{x}}\Phi(x_k,y_{k+1}),x_{k+1}-x_k}$ for $k\geq 0$. Now, \sa{for $k\geq 0$,} summing \eqref{eq:sc-h} and \eqref{eq:convex-Lip-f} and rearranging the terms lead to
\begin{align}\label{eq:concave}
&\cL(x_{k+1},y)-\cL(x,y_{k+1})  \\
& = f(x_{k+1})+\Phi(x_{k+1},y)-h(y)-f(x)-\Phi(x,y_{k+1})+h(y_{k+1})\nonumber\\
& \leq \Phi(x_{k+1},y)-\Phi(x_{k+1},y_{k+1}) +\fprod{s_k,y_{k+1}-y}+{\Lambda_k}+A_{k+1}+B_{k+1} \nonumber \\
& \leq-\fprod{q_{k+1},y_{k+1}-y}+\theta_k\fprod{q_k,y_{k+1}-y} +{\Lambda_k}+A_{k+1}+B_{k+1}, \nonumber
\end{align}
where in the last inequality we use the concavity of \sa{$\Phi(x_{k+1},\cdot)$}. \sa{To obtain a telescoping sum later,} we can rewrite the bound in \eqref{eq:concave} as \vspace*{-2mm}
\begin{align}\label{eq:rearrange}
&{\cL(x_{k+1},y)-\cL(x,y_{k+1})\leq}  \\
& \Big[\frac{1}{\tau_k}\bD_{\cX}(x,x_k)+\frac{1}{\sigma_k}\bD_{\cY}(y,y_k)+\theta_k\fprod{q_k,y_k-y}\Big]  \nonumber \\
& -\Big[\frac{1}{\tau_{k}}\bD_{\cX}(x,x_{k+1})+\frac{\mu}{2}\norm{x-x_{k+1}}_\cX^2+\frac{1}{\sigma_{k}}\bD_{\cY}(y,y_{k+1})+\fprod{q_{k+1},y_{k+1}-y}\Big] \nonumber \\
& +\Lambda_k-\frac{1}{\tau_{k}}\bD_{\cX}(x_{k+1},x_k) -\frac{1}{\sigma_k}\bD_{\cY}(y_{k+1},y_k)+\theta_k \fprod{q_k,y_{k+1}-y_k}.\nonumber
\end{align}%

\vspace*{-1mm}
\noindent Next, we bound the term \sa{$\fprod{q_k,y_{k+1}-y_k}$} in \eqref{eq:rearrange}. Indeed, one can bound $\fprod{q_k,y-y_k}$ for any given $y\in\cY$ as follows. Let $p_k^x\triangleq \grad_y\Phi(x_{k},y_k)-\grad_y\Phi(x_{k-1},y_k)$ and $p_k^y\triangleq \grad_y\Phi(x_{k-1},y_k)-\grad_y\Phi(x_{k-1},y_{k-1})$ which immediately implies that $q_k=p_k^x+p_k^y$. Moreover, for any $y\in\cY$, $y'\in\cY^*$, and \sa{$a>0$, we have $|\fprod{y',y}|\leq \frac{a}{2}\norm{y}_\cY^2+\frac{1}{2a}\norm{y'}_{\cY^*}^2$}. Hence, using this inequality twice, once for $\fprod{p^x_k,y-y_k}$ and once for $\fprod{p^y_k, y-y_k}$, and the fact that $\bD_{\cY}(y,\bar{y})\geq\frac{1}{2}\norm{y-\bar{y}}_\cY^2$, we obtain for all $k\geq 0$ that
\begin{align}
\label{eq:q_k}
|\fprod{q_k,y-y_k}|\leq &\alpha_k\bD_\cY(y,y_k)+\frac{1}{2\alpha_k}\norm{p_k^x}_{\cY^*}^2 
+\beta_k\bD_\cY(y,y_k)+\frac{1}{2\beta_k}\norm{p_k^y}_{\cY^*}^2, 
\end{align}
which holds for any $\alpha_k,\beta_k>0$. Moreover, if $L_{yy}=0$, then $\norm{p_k^y}_{\cY^*}=0$; hence, $|\fprod{q_k,y-y_k}|\leq \alpha_k\bD_{\cY}(y,y_k)+\frac{1}{2\alpha_k}\norm{p_k^x}_{\cY^*}^2$ for any $\alpha_k>0$. Since we define $0^2/0=0$, \eqref{eq:q_k} holds for any $\alpha_k>0$ and $\beta_k=0$ when $L_{yy}=0$.
Therefore, using \eqref{eq:q_k} within \eqref{eq:rearrange} \sa{with $\{\alpha_k,\beta_k\}$ satisfying Assumption~\ref{assum:step},} we get 
for $k\geq 0$,
\begin{subequations}\label{eq:bound-lagrange}
\begin{align}
&{\cL(x_{k+1},y)-\cL(x,y_{k+1})\leq} Q_k(z) - R_{k+1}(z) + E_k, \label{eq:bound-lagrange-main}\\
& Q_k(z) \triangleq \frac{1}{\tau_k}\bD_{\cX}(x,x_k)+\frac{1}{\sigma_k}\bD_{\cY}(y,y_k)+\theta_k\fprod{q_k,y_k-y} \label{eq:bound-lagrange-Q}\\
&\qquad \qquad \quad +\frac{\theta_k}{2\alpha_k}\norm{p_k^x}_{\cY^*}^2+\frac{\theta_k}{2\beta_k}\norm{p_k^y}_{\cY^*}^2, \nonumber\\
& R_{k+1}(z)\triangleq \frac{1}{\tau_{k}}\bD_{\cX}(x,x_{k+1})+\frac{\mu}{2}\norm{x-x_{k+1}}_\cX^2+\frac{1}{\sigma_{k}}\bD_{\cY}(y,y_{k+1}) \label{eq:bound-lagrange-R}\\
&\qquad \qquad \quad+\fprod{q_{k+1},y_{k+1}-y} +\frac{1}{2\alpha_{k+1}}\norm{p_{k+1}^x}_{\cY^*}^2+\frac{1}{2\beta_{k+1}}\norm{p_{k+1}^y}_{\cY^*}^2,\nonumber \\
& \sa{E_k\triangleq}
\Lambda_k +\frac{1}{2\alpha_{k+1}}\norm{p_{k+1}^x}_{\cY^*}^2 -\frac{1}{\tau_{k}}\bD_{\cX}(x_{k+1},x_k)\nonumber\\
&\qquad \qquad \quad  +\frac{1}{2\beta_{k+1}}\norm{p_{k+1}^y}_{\cY^*}^2-\Big(\frac{1}{\sigma_k}-\theta_k(\alpha_k+\beta_k)\Big)\bD_{\cY}(y_{k+1},y_k). \nonumber
\end{align}
\end{subequations}
\sa{Note that $E_k=E_k( x_{k+1}, y_{k+1})$ where $E_k(\cdot,\cdot)$ is defined in \eqref{eq:Ek} for $k\geq 0$.}
All the derivations until here, including \eqref{eq:bound-lagrange}, hold for any Bregman distance function $\bD_\cX$. Recall that if $\mu>0$, then we set $\bD_{\cX}(x,\bar{x})=\frac{1}{2}\norm{x-\bar{x}}_\cX^2$ and $\norm{x}_\cX=\sqrt{\fprod{x,x}}$. Now, multiplying both sides by $t_k>0$, summing over $k=0$ to $K-1$, and then using Jensens's inequality\footnote{For any $x\in\cX$ and $y\in\cY$, $\cL(\cdot,y)-\cL(x,\cdot)$ is convex.}, we obtain \vspace*{-4mm}
{\small
\begin{align}\label{eq:sum-bound}
T_K(\cL(\bar{x}_{K},y)-\cL(x,\bar{y}_{K}))&\leq\sum_{k=0}^{K-1}t_k\big(Q_k(z)-R_{k+1}(z)+E_k\big)\\
&\leq t_0Q_0(z)-t_{K-1}R_{K}(z)+\sum_{k=0}^{K-1}t_kE_k,\nonumber
\end{align}}%
where $T_K=\sum_{k=0}^{K-1}t_k$ and the last inequality follows from the step-size conditions in~\eqref{eq:step-size-condition-theta}, which imply that $t_{k+1}Q_{k+1}(z)-t_k R_{k+1}(z)\leq 0$ for $k=0$ to $K-2$.

\sa{According to Assumption~\ref{assum:step}, $\tau_k,\sigma_k$ and $\theta_k$ are chosen such that ${E}_k\leq 0$ for $k=0,\hdots, K-1$, then} 
\eqref{eq:sum-bound} implies that
{\small
\begin{eqnarray}\label{eq:bound-backtracking}
\lefteqn{T_K(\cL(\bar{x}_{K},y)-\cL(x,\bar{y}_{K}))\leq t_0Q_0(z)-t_{K-1}R_{K}(z)} \\
&&\leq \frac{t_0}{\tau_0}\bD_{\cX}(x,x_0)+\frac{t_0}{\sigma_0}\bD_{\cY}(y,y_0)+{t_K\theta_K}\fprod{q_{K},y-y_{K}}
\nonumber\\
&& \mbox{ }\ -t_K\Big[\frac{1}{\tau_{K}}\bD_{\cX}(x,x_{K})+\frac{1}{\sigma_{K}}\bD_{\cY}(y,y_{K})+\frac{\theta_K}{2\alpha_K}\norm{p_K^x}_{\cY^*}^2 + \frac{\theta_K}{2\beta_K}\norm{p_K^y}_{\cY^*}^2\Big],\nonumber
\end{eqnarray}}%
where \sa{in the last inequality we used $t_KQ_K(z)\leq t_{K-1}R_{K}(z)$ and 
$q_0=p_0^x=p_0^y=\mathbf{0}$ (which holds due to the initialization $x_0=x_{-1}$ and $y_0=y_{-1}$).} 
One can upper bound \sa{$\fprod{q_{K},y-y_{K}}$} in~\eqref{eq:bound-backtracking} 
  using \eqref{eq:q_k} for $k=K$. After plugging this bound in~\eqref{eq:bound-backtracking} and dividing both sides by $T_K$, we obtain the desired result in~\eqref{eq:rate}.
\end{proof}
Lemma~\ref{lem:supermartingale} 
is used to establish the convergence of the primal-dual iterate sequence.
\begin{lemma}~\cite{Robbins71}\label{lem:supermartingale}
{Let $\{a_k\}$, $\{b_k\}$, and $\{c_k\}$ be non-negative real sequences such that $a_{k+1}\leq a_k - b_k + c_k$ for all $k\geq 0$, and $\sum_{k=0}^\infty c_k<\infty$. Then $a=\lim_{k\rightarrow \infty}a_k$ exists, and $\sum_{k=0}^\infty b_k<\infty$.}
\end{lemma}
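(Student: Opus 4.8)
The plan is to reduce the statement to the elementary fact that a non-increasing sequence bounded below converges, by absorbing the summable perturbation $\{c_k\}$ into an auxiliary sequence. First I would introduce the tails $S_k\triangleq\sum_{j=k}^\infty c_j$, which are well-defined and finite precisely because $\sum_{k=0}^\infty c_k<\infty$; these satisfy $S_k\to 0$ as $k\to\infty$, together with the identity $S_k=c_k+S_{k+1}$ for every $k\geq 0$.

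Next I would set $v_k\triangleq a_k+S_k$, so that $v_k\geq 0$. Adding $S_{k+1}$ to both sides of the hypothesis $a_{k+1}\leq a_k-b_k+c_k$ and using $c_k+S_{k+1}=S_k$ yields $v_{k+1}\leq v_k-b_k$. Since $b_k\geq 0$, the sequence $\{v_k\}$ is non-increasing, and since $a_k\geq 0$ and $S_k\geq 0$ it is bounded below by $0$; hence $v\triangleq\lim_{k\to\infty}v_k$ exists and is finite. Because $S_k\to 0$, it follows that $a_k=v_k-S_k\to v$, so $a=\lim_{k\to\infty}a_k$ exists, which establishes the first claim.

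For the second claim I would telescope the recursion in the form $b_k\leq v_k-v_{k+1}$: summing over $k=0,\dots,N$ gives $\sum_{k=0}^N b_k\leq v_0-v_{N+1}\leq v_0$, where the final inequality uses $v_{N+1}\geq 0$. Letting $N\to\infty$ then yields $\sum_{k=0}^\infty b_k\leq v_0<\infty$, as required.

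There is essentially no hard step in this argument; the only two points needing care are the well-definedness and the telescoping identity $S_k=c_k+S_{k+1}$ of the tails (both of which rely on summability of $\{c_k\}$), and the lower bound $v_k\geq 0$ (which relies on non-negativity of both $\{a_k\}$ and $\{c_k\}$). I would note that only these monotonicity and non-negativity properties are invoked, so this deterministic lemma is exactly the degenerate, measure-theory-free specialization of the Robbins--Siegmund supermartingale convergence theorem of~\cite{Robbins71}, and its role in the sequel is to turn the one-step descent-type inequalities derived for the iterates (e.g.\ of the form produced in the proof of Theorem~\ref{thm:general-bound}) into convergence of the relevant Bregman-distance quantities and summability of the successive-difference terms.
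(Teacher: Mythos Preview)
Your argument is correct and is the standard elementary proof of this deterministic specialization of the Robbins--Siegmund lemma. Note, however, that the paper does not actually prove Lemma~\ref{lem:supermartingale}: it is simply quoted from~\cite{Robbins71} and used as a tool in the proof of Theorem~\ref{thm:convergence}, so there is no paper proof to compare against. Your write-up supplies exactly the missing justification and correctly identifies how the lemma is applied downstream.
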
\vspace*{-4mm}
\sa{\begin{theorem}\label{thm:convergence}
Suppose a saddle point for \eqref{eq:original-problem} exists, and Assumption~\ref{assum:step} holds for some $\delta>0$ and $\{\alpha_k,\beta_k,t_k\}$ such that $\inf_{k\geq 0}t_k\min\{\frac{1}{\tau_k},~\frac{1}{\sigma_k}-\theta_k(\alpha_k+\beta_k)\}\geq\delta'$ for some $\delta'>0$.
\begin{enumerate}[(i)]
\item \textbf{Case 1: $\lim_{k\to \infty}\min\{\tau_k,~\sigma_k\}>0$}. Then any limit point of $\{(x_k,y_k)\}_{k\geq 0}$ is a saddle point. In addition, suppose \rev{$\liminf_{k\to \infty}\min\{\alpha_k,\beta_k\}>0$} when $L_{yy}>0$, and \rev{$\liminf_{k\to \infty}\alpha_k>0$} when $L_{yy}=0$, if $\inf_{k\geq 0}t_k>0$ and $\sup_{k\geq 0}t_k<\infty$ hold, then $\{(x_k,y_k)\}$ has a unique limit point.
\item \textbf{Case 2: $\tau_k\to 0$ and $\lim_{k\to\infty}\sigma_k>0$.}\footnote{Similar conditions can also be given for the case $\max\{\tau_k,\sigma_k\}\to 0$ or for the case $\sigma_k\to 0$ and $\inf_k \tau_k>0$; however, the two cases considered in Theorem~\ref{thm:general-bound} are sufficient to analyze APD and APDB, displayed in Algorithm~\ref{alg:APD} and Algorithm~\ref{alg:APDB}, respectively.} If $\varphi_{\cX}$ defining $\bD_{\cX}$ is Lipschitz differentiable {and $t_k=\Omega(\frac{1}{\tau_k})$,} then any limit point of $\{(x_k,y_k)\}_{k\geq 0}$ is a saddle point.
\end{enumerate}
\end{theorem}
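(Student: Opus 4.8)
The plan is to combine the per-iteration estimate established inside the proof of Theorem~\ref{thm:general-bound} with the Robbins--Siegmund lemma (Lemma~\ref{lem:supermartingale}) applied to the potential $a_k\triangleq t_kQ_k(x^*,y^*)$, where $(x^*,y^*)$ is a saddle point and $Q_k$ is the quantity defined in \eqref{eq:bound-lagrange-Q}. First I would fix a saddle point $(x^*,y^*)$ and evaluate the bound \eqref{eq:bound-lagrange-main} at $z=(x^*,y^*)$; since $(x^*,y^*)$ is a saddle point, $\cL(x_{k+1},y^*)-\cL(x^*,y_{k+1})\geq 0$, so $R_{k+1}(x^*,y^*)\leq Q_k(x^*,y^*)+E_k$. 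Multiplying by $t_k>0$ and using the step-size conditions \eqref{eq:step-size-condition-theta} in the form $t_{k+1}Q_{k+1}(z)\leq t_kR_{k+1}(z)$ (already exploited in the proof of Theorem~\ref{thm:general-bound}) yields $a_{k+1}\leq a_k+t_kE_k$. Invoking \eqref{eq:step-size-condition-Ek} gives $t_kE_k\leq -b_k$ with $b_k\triangleq \delta t_k[\bD_\cX(x_{k+1},x_k)/\tau_k+\bD_\cY(y_{k+1},y_k)/\sigma_k]\geq 0$, i.e. $a_{k+1}\leq a_k-b_k$. To apply Lemma~\ref{lem:supermartingale} I must verify $a_k\geq 0$: bounding the inner-product term $\theta_k\fprod{q_k,y_k-y^*}$ in $Q_k$ through \eqref{eq:q_k} and discarding the nonnegative $\norm{p_k^x}_{\cY^*}^2,\norm{p_k^y}_{\cY^*}^2$ contributions shows $Q_k(x^*,y^*)\geq \tfrac{1}{\tau_k}\bD_\cX(x^*,x_k)+(\tfrac{1}{\sigma_k}-\theta_k(\alpha_k+\beta_k))\bD_\cY(y^*,y_k)\geq 0$, where nonnegativity of the last expression follows from the standing hypothesis $\inf_k t_k\min\{1/\tau_k,\,1/\sigma_k-\theta_k(\alpha_k+\beta_k)\}\geq\delta'>0$.

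With the recursion in hand, Lemma~\ref{lem:supermartingale} (with $c_k\equiv 0$) gives that $\lim_k a_k$ exists and $\sum_k b_k<\infty$. The same lower bound on $Q_k$ combined with $t_k/\tau_k\geq\delta'$ and $t_k(1/\sigma_k-\theta_k(\alpha_k+\beta_k))\geq\delta'$ yields $a_k\geq\delta'[\bD_\cX(x^*,x_k)+\bD_\cY(y^*,y_k)]$, so $\{(x_k,y_k)\}$ is bounded and hence possesses limit points; moreover $\sum_k b_k<\infty$ together with $t_k/\tau_k,\,t_k/\sigma_k\geq\delta'$ forces $\bD_\cX(x_{k+1},x_k)\to 0$ and $\bD_\cY(y_{k+1},y_k)\to 0$, whence $\norm{x_{k+1}-x_k}_\cX\to 0$ and $\norm{y_{k+1}-y_k}_\cY\to 0$. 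To show that an arbitrary limit point $(\bar x,\bar y)$ (along a subsequence $k_j$) is a saddle point, I would pass to the limit in the subproblem optimality inequalities \eqref{eq:sc-h} and \eqref{eq:sc-f}. In Case~1, $\tau_k,\sigma_k$ are bounded away from $0$, the residual Bregman brackets vanish, $s_{k_j}\to\grad_y\Phi(\bar x,\bar y)$ and $\grad_x\Phi(x_{k_j},y_{k_j+1})\to\grad_x\Phi(\bar x,\bar y)$ by (Lipschitz) continuity of the partial gradients and boundedness of the momentum parameters, and lower semicontinuity of $f,h$ then gives $\grad_y\Phi(\bar x,\bar y)\in\partial h(\bar y)$ and $-\grad_x\Phi(\bar x,\bar y)\in\partial f(\bar x)$ (the latter up to the $\tfrac{\mu}{2}\norm{\cdot}_\cX^2$ term); combined with convexity/concavity of $\Phi$ these are exactly the two inequalities $\cL(\bar x,y)\leq\cL(\bar x,\bar y)\leq\cL(x,\bar y)$ characterizing a saddle point.

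For uniqueness of the limit point under the extra assumptions ($\liminf_k\min\{\alpha_k,\beta_k\}>0$ when $L_{yy}>0$ / $\liminf_k\alpha_k>0$ when $L_{yy}=0$, and $0<\inf_k t_k\leq\sup_k t_k<\infty$), I would re-run the potential argument with $(x^*,y^*)$ replaced by the saddle point $(\bar x,\bar y)$, which is legitimate since limit points are saddle points. This makes $\lim_k t_kQ_k(\bar x,\bar y)$ exist. Along the subsequence $k_j\to(\bar x,\bar y)$ every term of $t_{k_j}Q_{k_j}(\bar x,\bar y)$ vanishes --- the Bregman terms because $(x_{k_j},y_{k_j})\to(\bar x,\bar y)$, and $\theta_{k_j}\fprod{q_{k_j},y_{k_j}-\bar y}$, $\norm{p_{k_j}^x}_{\cY^*}^2$, $\norm{p_{k_j}^y}_{\cY^*}^2$ because $q_{k_j},p_{k_j}^x,p_{k_j}^y\to 0$ (using $\norm{x_k-x_{k-1}}_\cX\to 0$, $\norm{y_k-y_{k-1}}_\cY\to 0$ and continuity of $\grad_y\Phi$), while all coefficients are uniformly bounded under the extra hypotheses. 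Hence the full limit equals $0$, and $t_kQ_k(\bar x,\bar y)\geq\delta'[\bD_\cX(\bar x,x_k)+\bD_\cY(\bar y,y_k)]\geq 0$ then forces $(x_k,y_k)\to(\bar x,\bar y)$, i.e. the limit point is unique.

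Finally, Case~2 ($\tau_k\to 0$, $\lim_k\sigma_k>0$) requires only the ``limit point is a saddle point'' claim; the only new difficulty --- and the main obstacle of the whole argument --- is passing to the limit in the $x$-subproblem \eqref{eq:sc-f}, since $1/\tau_k\to\infty$ multiplies the Bregman residual. Here I would use the three-point identity $\bD_\cX(x,x_k)-\bD_\cX(x,x_{k+1})-\bD_\cX(x_{k+1},x_k)=\fprod{\grad\varphi_\cX(x_{k+1})-\grad\varphi_\cX(x_k),\,x-x_{k+1}}$ together with Lipschitz differentiability of $\varphi_\cX$ (constant $L_\varphi$) to bound the residual by $\tfrac{L_\varphi}{\tau_k}\norm{x_{k+1}-x_k}_\cX\norm{x-x_{k+1}}_\cX$; the assumption $t_k=\Omega(1/\tau_k)$ upgrades $\sum_k b_k<\infty$ to $\sum_k\tfrac{1}{\tau_k^2}\norm{x_{k+1}-x_k}_\cX^2<\infty$, so $\tfrac{1}{\tau_k}\norm{x_{k+1}-x_k}_\cX\to 0$, the residual vanishes, and $\norm{x_{k+1}-x_k}_\cX=\tau_k\cdot\tfrac{1}{\tau_k}\norm{x_{k+1}-x_k}_\cX\to 0$ keeps $x_{k_j+1}\to\bar x$. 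With this, the limiting $x$-optimality inequality is recovered as in Case~1, and since $\sigma_k$ stays bounded below the $y$-part is unchanged; thus every limit point is a saddle point. The delicate point throughout is controlling all auxiliary quantities ($q_k,p_k^x,p_k^y$ and the Bregman residuals) simultaneously with the possibly degenerating step-sizes, which is precisely what the uniform bounds in the hypotheses guarantee.
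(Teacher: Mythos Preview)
Your proposal is correct and follows essentially the same architecture as the paper: the potential $a_k=t_kQ_k(z^*)$, the Robbins--Siegmund lemma with $c_k\equiv 0$, the resulting boundedness of $\{z_k\}$ together with $\norm{z_{k+1}-z_k}\to 0$, and the re-run of the potential argument at a limit point to obtain uniqueness all match the paper's proof. The only difference is in how a limit point is certified to be a saddle point --- you pass to the limit in the variational inequalities \eqref{eq:sc-h}--\eqref{eq:sc-f} via lower semicontinuity of $f,h$, whereas the paper rewrites the subproblem optimality as subdifferential inclusions $u_n\in\partial f(x_{k_n+1})$, $v_n\in\partial h(y_{k_n+1})$ and invokes the closed-graph property of the subdifferential (Rockafellar, Theorem~24.4); in Case~2 your three-point identity is exactly the mechanism behind the paper's bound $\tfrac{1}{\tau_{k_n}}\norm{\grad\varphi_\cX(x_{k_n})-\grad\varphi_\cX(x_{k_n+1})}_{\cX^*}\to 0$.
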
}%
\begin{proof}
Suppose $z^\#=(x^\#,y^\#)$ is a saddle point $\cL$ in~\eqref{eq:original-problem}. Let $x=x^\#$ and $y=y^\#$ in \eqref{eq:bound-lagrange}, \sa{then \eqref{eq:bound-lagrange-Q}, \eqref{eq:bound-lagrange-R} and the
assumption on $\delta'>0$ imply that}\vspace*{-2mm}
{\small
\begin{subequations}
\label{eq:Qk}
\begin{align}
\sa{t_k}Q_k(z^\#) &\geq \frac{\sa{t_k}}{\tau_k}\bD_\cX(x^\#,x_k)+\sa{t_k}(\frac{1}{\sigma_k}-\theta_k(\alpha_k+\beta_k))\bD_\cY(y^\#,y_k)\nonumber \\
    &\geq \delta'\bD_\cX(x^\#,x_k)+\delta'\bD_\cY(y^\#,y_k)> 0, \label{Qk-b}\\
\sa{t_k} R_{k+1}(z^\#) &\geq \sa{t_{k+1}Q_{k+1}(z^\#).} \label{Qk-a}
\end{align}
\end{subequations}}%
Multiplying the inequality \eqref{eq:bound-lagrange-main} by $t_k$, then using $\cL({x}_{k+1},y^\#)-\cL(x^\#,{y}_{k+1})\geq 0$, and \eqref{Qk-a}, the following can be obtained
{
\begin{align}\label{eq:bound-iterates}
0\leq &t_kQ_k(z^\#)-t_{k+1}Q_{k+1}(z^\#)+t_kE_k({x}_{k+1},{y}_{k+1})\\
\leq & t_kQ_k(z^\#)-t_{k+1}Q_{k+1}(z^\#)-\delta t_k[\bD_\cX(x_{k+1},x_k)/\sa{\tau_k}+\bD_\cY(y_{k+1},y_k)/\sigma_k]\nonumber.
\end{align}}%
Let $a_k=t_kQ_k(z^\#)$, \sa{$b_k=\delta t_k[\bD_\cX(x_{k+1},x_k)/\sa{\tau_k}+\bD_\cY(y_{k+1},y_k)/\sigma_k]$}, and $c_k=0$ for $k\geq 0$, then Lemma~\ref{lem:supermartingale} implies that $a\triangleq \lim_{k\rightarrow \infty} a_k$ exist. Therefore, \eqref{Qk-b} implies that $\{z_k\}$ is a bounded sequence, where $z_k\triangleq(x_k,y_k)$; hence, it has a convergent subsequence $z_{k_n}\rightarrow z^*$ as $n\rightarrow \infty$ for some $z^*\in\cX\times\cY$ where $z^*=(x^*,y^*)$. \sa{Note that since $t_k,\theta_k,\alpha_k,\beta_k\geq 0$ for $k\geq 0$, we have $\inf_{k\geq 0}\min\{t_k/\tau_k,~t_k/\sigma_k\}\geq \delta'>0$. Moreover, Lemma~\ref{lem:supermartingale} also implies that $\sum_{k=0}^\infty b_k <\infty$; hence, we also have $\sum_{k\geq 0}\norm{z_{k+1}-z_k}^2<\infty$. Thus,} for any $\epsilon>0$ there exists $N_1$ such that for any $n\geq N_1$, $\max\{\norm{z_{k_n}-z_{k_n-1}},~\norm{z_{k_n}-z_{k_n+1}}\}< \frac{\epsilon}{2}$. Convergence of $\{z_{k_n}\}$ sequence also implies that there exists $N_2$ such that for any $n\geq N_2$, $\norm{z_{k_n}-z^*}< \frac{\epsilon}{2}$. Therefore, letting $N\triangleq \max\{N_1,N_2\}$, we conclude 
$\norm{z_{k_n\pm 1}-z^*}< \epsilon$, i.e., $z_{k_n\pm 1} \rightarrow z^*$ as $n\rightarrow \infty$.

Now we show that $z^*$ is indeed
a saddle point of \eqref{eq:original-problem} by considering the optimality conditions for the 
$y$- and $x$-subproblems 
of Algorithm~\ref{alg:GAPD}, i.e., Lines~\ref{algeq:y-main} and~\ref{algeq:x-main} of \textbf{MainStep}. 
For all $n\in\mathbb{Z}_+$, let $u_n\triangleq \left(\grad\sa{\varphi_{\cX}}(x_{k_n})-\grad\sa{\varphi_{\cX}}(x_{k_n+1})\right)/\tau_{k_n}-\grad_x\Phi(x_{k_n},y_{k_n+1})$ and $v_n\triangleq s_{k_n}+\left(\grad\sa{\varphi_{\cY}}(y_{k_n})-\grad\sa{\varphi_{\cY}}(y_{k_n+1})\right)/\sigma_{k_n}$; one has $u_n\in \partial f(x_{k_n+1})$ and $v_n\in \partial h(y_{k_{n}+1})$ for $n\geq 0$. Since \sa{$\grad\varphi_{\cX}$ and $\grad\varphi_{\cY}$ are continuously differentiable on $\dom f$ and $\dom h$, respectively, whenever $\lim_{k\to \infty}\min\{\tau_k,~\sigma_k\}>0$,} it follows from Theorem 24.4 in \cite{rockafellar2015convex} that $\partial f(x^*) \ni \lim_{n\rightarrow \infty}u_n=-\grad_x\Phi(x^*,y^*)$, $\partial h(y^*) \ni \lim_{n\rightarrow \infty}v_n={\grad_y\Phi(x^*,y^*)}$, which implies that $z^*$ is a saddle point of \eqref{eq:original-problem}. In addition, if $\sup_{k\geq 0}t_k<\infty$, we show that $z^*$ is the unique limit point. Since \eqref{eq:Qk} and \eqref{eq:bound-iterates} are true for any saddle point $z^\#$, letting $z^\#=z^*$ and invoking Lemma~\ref{lem:supermartingale} again, one can conclude that $w^*=\lim_{k\rightarrow \infty} w_k \geq 0$ exists, where $w_k\triangleq t_kQ_k(z^*)$. \sa{Because $\{t_k\}\subset\reals_+$ is a bounded sequence, $\{\theta_k\}\subset\reals_+$ is also a bounded sequence. Therefore, using $\lim_{k\to \infty}\min\{\tau_k,\sigma_k\}>0$ {together with $\liminf_{k\to \infty}\min\{\alpha_k,\beta_k\}>0$} when $L_{yy}>0$, it follows from $z_{k_n}\rightarrow z^*$ and $z_{k_n-1}\rightarrow z^*$ that we have} $\lim_{n\rightarrow \infty} w_{k_n}=0$;
henceforth, $w^*=\lim_{k\rightarrow \infty} w_k=\lim_{n\rightarrow \infty} w_{k_n}=0$, and \eqref{Qk-b} evaluated at $z^*$ implies that $z_k\rightarrow z^*$. \sa{For $L_{yy}=0$, $\lim_{k\to \infty}\min\{\tau_k,\sigma_k\}>0$ and {$\liminf_{k\to \infty}\alpha_k>0$} is enough since $\norm{p_k^y}=0$ for $k\geq 0$.}

\sa{On the other hand, for the case $\tau_k\to 0$ and $\lim_k \sigma_k>0$, we require that $\grad\varphi_{\cX}$ is Lipschitz on $\dom f$ with constant $L_{\varphi_{\cX}}$ and that there exist $C>0$ and $K\geq 1$ such that $t_k\geq C\frac{1}{\tau_k}$ for $k\geq K$. Clearly, $\norm{\grad\sa{\varphi_{\cX}}(x)-\grad\sa{\varphi_{\cX}}(x')}_{\cX^*}\leq \sqrt{2L_{\varphi_{\cX}}^2D_{\cX}(x',x)}$ for $x,x'\in\dom f$; hence, to argue that $\lim_{n\rightarrow \infty}u_n=-\grad_x\Phi(x^*,y^*)$, one needs {$\lim_n \bD_{\cX}(x_{k_n+1},x_{k_n})/{\tau_{k_n}^2}=0$}. Indeed, since $\sum_k b_k<\infty$, $t_k\bD_\cX(x_{k+1},x_k)/\sa{\tau_k}\to 0$ holds; therefore, $\bD_\cX(x_{k+1},x_k)/\tau_k^2\to 0$ as $t_k\geq C\frac{1}{\tau_k}$ for $k\geq K$, which implies $\lim_{n\rightarrow \infty}u_n=-\grad_x\Phi(x^*,y^*)$. Finally, $\lim_{n\rightarrow \infty}v_n={\grad_y\Phi(x^*,y^*)}$ as discussed above for the previous case since $\lim_k \sigma_k>0$ and $\grad\varphi_{\cY}$ is continuous. Again invoking \cite[Theorem 24.4]{rockafellar2015convex}, one can establish that $z^*$ is a saddle point of \eqref{eq:original-problem}.}
\end{proof}

\sa{Next, we provide some useful results on $\{\tau_k,\sigma_k,\theta_k\}_k$ of the algorithms APD and APDB which will 
help us derive the rate results in Theorems~\ref{thm:main} and~\ref{thm:backtrack}. }
\begin{lemma}
\label{lem:stronger-step-condition}
\sa{Suppose the sequence $\{\tau_k,\sigma_k,\theta_k\}_{k\geq 0}$ satisfy 
\eqref{eq:step-size-condition-pd} for some positive $\{t_k,~\alpha_k\}_{k\geq 0}$, nonnegative $\{\beta_k\}_{k\geq 0}$, and $\delta\in[0,1)$. Let $\{x_k,y_k\}$ be the GAPD iterate sequence corresponding to $\{\tau_k,\sigma_k,\theta_k\}$. Then $\{x_k,y_k\}$ and $\{\tau_k,\sigma_k,\theta_k\}$ satisfy 
\eqref{eq:step-size-condition-Ek} with the same $\{t_k,\alpha_k,\beta_k\}$ and $\delta$.}
\end{lemma}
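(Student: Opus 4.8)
The plan is to prove the claim by direct substitution: evaluate the test function $E_k(\cdot,\cdot)$ from \eqref{eq:Ek} at the GAPD iterate $(x_{k+1},y_{k+1})$ and bound each of its three ``curvature'' terms by a nonnegative multiple of $\bD_\cX(x_{k+1},x_k)$ or $\bD_\cY(y_{k+1},y_k)$, using only the Lipschitz properties in Assumption~\ref{assum} and the lower bound $\bD_\cX(x,\bar x)\ge\tfrac12\norm{x-\bar x}_\cX^2$, $\bD_\cY(y,\bar y)\ge\tfrac12\norm{y-\bar y}_\cY^2$ recorded just after Definition~\ref{def:bregman}. Once every term is expressed as a coefficient times a Bregman divergence, the two inequalities in \eqref{eq:step-size-condition-pd} pin those coefficients down to be at most $-\delta/\tau_k$ and $-\delta/\sigma_k$, which is exactly \eqref{eq:step-size-condition-Ek}. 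No recursion or summation is needed; this is a per-iteration, purely algebraic verification.

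Concretely, first I would handle the primal block. The leading three terms of $E_k(x_{k+1},y_{k+1})$ form $\Lambda_k$ together with $-\tfrac{1}{\tau_k}\bD_\cX(x_{k+1},x_k)$; by the descent-lemma bound \eqref{eq:Lxx_bound} applied with fixed $y=y_{k+1}$ one gets $\Lambda_k\le\tfrac{L_{xx}}{2}\norm{x_{k+1}-x_k}_\cX^2\le L_{xx}\bD_\cX(x_{k+1},x_k)$. Next, for the term $\tfrac{1}{2\alpha_{k+1}}\norm{\grad_y\Phi(x_{k+1},y_{k+1})-\grad_y\Phi(x_k,y_{k+1})}^2$ I would invoke \eqref{eq:Lipschitz_y} with $\bar y=y_{k+1}$ held fixed, so the gradient difference is at most $L_{yx}\norm{x_{k+1}-x_k}_\cX$, giving a bound of $\tfrac{L_{yx}^2}{\alpha_{k+1}}\bD_\cX(x_{k+1},x_k)$. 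Symmetrically, for $\tfrac{1}{2\beta_{k+1}}\norm{\grad_y\Phi(x_k,y_{k+1})-\grad_y\Phi(x_k,y_k)}^2$ I would apply \eqref{eq:Lipschitz_y} with $\bar x=x_k$ fixed, bounding it by $\tfrac{L_{yy}^2}{\beta_{k+1}}\bD_\cY(y_{k+1},y_k)$.

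Collecting terms then yields
\begin{align*}
E_k(x_{k+1},y_{k+1})\le{}&\Big(L_{xx}+\tfrac{L_{yx}^2}{\alpha_{k+1}}-\tfrac{1}{\tau_k}\Big)\bD_\cX(x_{k+1},x_k)\\
&+\Big(\tfrac{L_{yy}^2}{\beta_{k+1}}+\theta_k(\alpha_k+\beta_k)-\tfrac{1}{\sigma_k}\Big)\bD_\cY(y_{k+1},y_k),
\end{align*}
and substituting the two estimates of \eqref{eq:step-size-condition-pd} bounds the first parenthesis by $\tfrac{1-\delta}{\tau_k}-\tfrac{1}{\tau_k}=-\tfrac{\delta}{\tau_k}$ and the second by $\tfrac{1-\delta}{\sigma_k}-\tfrac{1}{\sigma_k}=-\tfrac{\delta}{\sigma_k}$; since Bregman divergences are nonnegative, upper bounding the coefficients upper bounds the products, delivering \eqref{eq:step-size-condition-Ek} with the same $\{t_k,\alpha_k,\beta_k\}$ and $\delta$. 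I do not anticipate a genuine obstacle here — the work is bookkeeping of signs and matching indices $\alpha_{k+1},\beta_{k+1}$ versus $\alpha_k,\beta_k$. The one point requiring care is the degenerate case $L_{yy}=0$ with $\beta_{k+1}=0$ (and $\beta_k=0$): there the convention $0^2/0=0$ makes the $\tfrac{L_{yy}^2}{\beta_{k+1}}$ term vanish and $\theta_k\beta_k=0$, so the dual inequality in \eqref{eq:step-size-condition-pd} reduces to $\tfrac{1-\delta}{\sigma_k}\ge\theta_k\alpha_k$ and the same conclusion follows; I would state this edge case explicitly to keep the argument airtight.
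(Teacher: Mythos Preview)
Your proposal is correct and follows essentially the same approach as the paper's own proof: bound each of the three curvature terms in $E_k(x_{k+1},y_{k+1})$ via the Lipschitz estimates of Assumption~\ref{assum} and the $1$-strong convexity of the Bregman generators, collect into coefficients multiplying $\bD_\cX(x_{k+1},x_k)$ and $\bD_\cY(y_{k+1},y_k)$, and then invoke \eqref{eq:step-size-condition-pd}. Your explicit treatment of the $L_{yy}=0$, $\beta_{k+1}=0$ edge case is a welcome addition the paper leaves to the $0^2/0=0$ convention.
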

\begin{proof}
From Assumption \ref{assum}, 
for any $k\geq 0$ and $(x,y)\in\cX\times\cY$, we have
		\begin{align*}
		&\Phi(x,y)-\Phi(x_k,y)-\fprod{\grad_x\Phi(x_k,y),x-x_k}\leq L_{xx}\norm{x-x_k}_{\cX}^2/2\leq L_{xx}\bD_{\cX}(x,x_k),\\
		&\tfrac{1}{2}\norm{\grad_y\Phi(x,y)-\grad_y\Phi(x_k,y)}_{\cY^*}^2\leq L_{yx}^2\norm{x-x_k}_{\cY}^2/2\leq L_{yx}^2\bD_{\cX}(x,x_k) , \\
		&\tfrac{1}{2}\norm{\grad_y\Phi(x_k,y)-\grad_y\Phi(x_k,y_k)}_{\cY^*}^2\leq L_{yy}^2\norm{y-y_k}_{\cY}^2/2\leq L_{yy}^2\bD_{\cY}(y,y_k).
		\end{align*}
		Above inequalities evaluated at $(x,y)=(x_{k+1},y_{k+1})$ imply that
		\begin{align}\label{eq:upper-bound-E}
		E_k\triangleq E_k(x_{k+1},y_{k+1})\leq&~ (L_{yx}^2/\alpha_{k+1}+L_{xx}-1/\tau_k)\bD_\cX(x_{k+1},x_k) \nonumber\\
		&+(L_{yy}^2/\beta_{k+1}+\theta_k(\alpha_k+\beta_k)-1/\sigma_k)\bD_\cY(y_{k+1},y_k) \nonumber \\
		&\leq  -\delta[\tfrac{1}{\tau_k}\bD_\cX(x_{k+1},x_k)+\tfrac{1}{\sigma_k}\bD_\cY(y_{k+1},y_k)],
		\end{align}
		where in the last inequality we used \eqref{eq:step-size-condition-pd} and non-negativity of Bregman functions.
\end{proof}
\begin{lemma}
\label{lem:step-size-seq}
\sa{Given {$\{\tau_k\}_{k\geq 0}\subset\reals_{++}$ and $\bar{\tau},\gamma_0>0$, let $\sigma_{-1}=\gamma_0\bar{\tau}$} and  $\sigma_k=\gamma_k \tau_k$, $\theta_k=\sigma_{k-1}/\sigma_k$ and $\gamma_{k+1}=\gamma_k (1+\mu \tau_k)$ for $k\geq 0$. $\{\tau_k,\sigma_k,\theta_k\}$ satisfies \eqref{eq:step-size-condition-theta} for $\{t_k\}$ such that $t_k=\sigma_k/\sigma_0$ for $k\geq 0$.}
\end{lemma}
\begin{proof}
Since $t_k=\sigma_k/\sigma_0$ and $\tau_k>0$ for $k\geq 0$, \eqref{eq:step-size-condition-theta} can be written as $(1+\mu\tau_k)\geq \frac{\sigma_{k+1}\tau_k}{\sigma_k\tau_{k+1}}$ and $\frac{\sigma_k}{\sigma_{k+1}}=\theta_{k+1}$. The latter condition clearly holds due to our choice of $\theta_k$. Moreover, from $\sigma_k= \gamma_k\tau_k$ and $\gamma_{k+1}=\gamma_k(1+\mu\tau_k)$, we conclude that the former condition holds with equality by observing that $(1+\mu\tau_k)=\frac{\gamma_{k+1}}{\gamma_k}=\frac{\sigma_{k+1}\tau_k}{\sigma_k\tau_{k+1}}$.
\end{proof}
\begin{lemma}\label{lem:parameter}
\sa{Under Assumption~\ref{assum}, consider APDB displayed in Algorithm~\ref{alg:APDB} for any given $\delta\in[0,1)$ and $c_\alpha,c_\beta\geq 0$ such that $c_\alpha+c_\beta+\delta\leq 1$. When $L_{yy}>0$, set $c_\alpha,~c_\beta> 0$; otherwise, when $L_{yy}=0$, set $c_\alpha>0$ and $c_\beta=0$.
The APDB iterate and step-size sequences, i.e., $\{x_k,y_k\}_{k\geq 0}$ and $\{\tau_k,\sigma_k,\theta_k\}_{k\geq 0}$, are well-defined; more precisely, 
for any $k\geq 0$, the backtracking condition, i.e., $E_k(x_{k+1},y_{k+1})\leq -\delta[\tfrac{1}{\tau_k}\bD_\cX(x_{k+1},x_k)+\tfrac{1}{\sigma_k}\bD_\cY(y_{k+1},y_k)]$, holds after finite number of inner iterations.}

\sa{For $k\geq 0$, $\tau_k\geq\eta\hat{\tau}_k$ for some positive $\{\hat{\tau}_k\}$: when $L_{yy}=0$ and $\mu\geq 0$, $\hat{\tau}_k\geq \Psi_1\sqrt{\gamma_0/\gamma_k}$ for $k\geq 0$; on the other hand, when $L_{yy}>0$ and $\mu=0$,
$\hat{\tau}_k\geq \min\{\Psi_1,\Psi_2\}\sqrt{\gamma_0/\gamma_k}$ 
for $k\geq 0$, where $\Psi_1$ and $\Psi_2$ are defined in \eqref{eq:tauhat_bound}.\footnote{$\mu=0$ implies $\gamma_k=\gamma_0$ for $k\geq 0$, while $\mu>0$ implies $\gamma_{k+1}>\gamma_k$ for $k\geq 0$.}}
\end{lemma}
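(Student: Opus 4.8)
The plan is to reduce the backtracking test in Line~\ref{algeq:test_function} of Algorithm~\ref{alg:APDB} to an explicit upper bound on $\tau_k$, and then to track how that bound interacts with the across-iteration update $\tau_{k+1}=\tau_k\sqrt{\gamma_k/\gamma_{k+1}}$. The bridge is Lemma~\ref{lem:stronger-step-condition}: whenever the step-size condition \eqref{eq:step-size-condition-pd} holds, the inequality \eqref{eq:step-size-condition-Ek}, which is exactly the test evaluated in Line~\ref{algeq:test_function}, passes. Hence it suffices to exhibit, for each $k$, a threshold $\hat\tau_k>0$ such that $\tau_k\le\hat\tau_k$ forces \eqref{eq:step-size-condition-pd}, and to show $\hat\tau_k$ may be taken no smaller than $\Psi_1\sqrt{\gamma_0/\gamma_k}$ (resp. $\min\{\Psi_1,\Psi_2\}\sqrt{\gamma_0/\gamma_k}$).

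First I would substitute the APDB choices $\alpha_{k+1}=c_\alpha/\sigma_k$, $\beta_{k+1}=c_\beta/\sigma_k$, $\theta_k=\sigma_{k-1}/\sigma_k$ and $\sigma_k=\gamma_k\tau_k$ into \eqref{eq:step-size-condition-pd}. Using $\theta_k(\alpha_k+\beta_k)=(c_\alpha+c_\beta)/\sigma_k$, the $x$-inequality becomes the quadratic $\frac{L_{yx}^2\gamma_k}{c_\alpha}\tau_k^2+L_{xx}\tau_k\le 1-\delta$, whose positive root I denote $\tau^+_k$; a direct computation gives $\tau^+_k=\frac{c_\alpha L_{xx}}{2\gamma_k L_{yx}^2}\big(\sqrt{1+\tfrac{4(1-\delta)\gamma_k L_{yx}^2}{c_\alpha L_{xx}^2}}-1\big)$, so $\tau^+_0=\Psi_1$ with $\zeta$ as in \eqref{eq:tauhat_bound}. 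When $L_{yy}>0$, the $y$-inequality reduces to $\sigma_k^2\le c_\beta\big(1-(c_\alpha+c_\beta+\delta)\big)/L_{yy}^2$, i.e. $\tau_k\le \tfrac{1}{\gamma_k}\sqrt{c_\beta(1-(c_\alpha+c_\beta+\delta))}/L_{yy}$, which equals $\Psi_2$ when $\mu=0$ (so $\gamma_k=\gamma_0$); when $L_{yy}=0$ this second inequality is vacuous. Thus \eqref{eq:step-size-condition-pd} holds whenever $\tau_k$ lies below the corresponding threshold.

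The key monotonicity step is to lower bound $\tau^+_k$ by $\Psi_1\sqrt{\gamma_0/\gamma_k}$. Writing $a=4(1-\delta)L_{yx}^2/(c_\alpha L_{xx}^2)$, one has $\tau^+_k\sqrt{\gamma_k}=\tfrac{c_\alpha L_{xx}}{2L_{yx}^2}f(\gamma_k)$ with $f(\gamma)=(\sqrt{1+a\gamma}-1)/\sqrt{\gamma}$; differentiating shows $f'\ge 0$, so $f$ is non-decreasing, and since $\gamma_k\ge\gamma_0$ (because $\gamma_{k+1}=\gamma_k(1+\mu\tau_k)\ge\gamma_k$) we obtain $\tau^+_k\sqrt{\gamma_k}\ge\Psi_1\sqrt{\gamma_0}$, i.e. $\tau^+_k\ge\Psi_1\sqrt{\gamma_0/\gamma_k}$. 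Combined with the $y$-bound, this lets me take $\hat\tau_k=\Psi\sqrt{\gamma_0/\gamma_k}$, where $\Psi=\Psi_1$ if $L_{yy}=0$ and $\Psi=\min\{\Psi_1,\Psi_2\}$ if $L_{yy}>0$ (with $\mu=0$, so $\gamma_k=\gamma_0$); then $\tau_k\le\hat\tau_k$ implies both parts of \eqref{eq:step-size-condition-pd}, hence the test passes, and $\hat\tau_k$ meets the stated lower bound by construction.

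Finally I would settle well-definedness and the bound $\tau_k\ge\eta\hat\tau_k$ simultaneously by passing to the normalized quantity $u_k\triangleq\tau_k\sqrt{\gamma_k/\gamma_0}$, for which $\hat\tau_k\sqrt{\gamma_k/\gamma_0}=\Psi$ is constant while the update yields $\tau_{k+1}^{(0)}\sqrt{\gamma_{k+1}/\gamma_0}=\tau_k\sqrt{\gamma_k/\gamma_0}$, where $\tau_k^{(0)}$ denotes the value of $\tau_k$ entering iteration $k$. Thus in the $u$-scale the starting value of iteration $k+1$ equals the final value of iteration $k$, and each failed test multiplies $u$ by $\eta$. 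Since the test is guaranteed to pass once $u_k\le\Psi>0$, only finitely many reductions occur; using $u_k^{(0)}\le\bar\tau$ (by induction on the update, as backtracking only decreases $u$) together with $\hat\tau_k\ge\Psi\sqrt{\gamma_0/\gamma_k}$, the number of inner iterations is at most $1+\log_{1/\eta}(\bar\tau/\Psi)$ uniformly in $k$. For the lower bound: if the test failed at least once at iteration $k$, the value just before the last reduction exceeded $\hat\tau_k$ (contrapositive of the sufficiency above), so the final $\tau_k>\eta\hat\tau_k$; if no backtracking occurred, $u_k=u_k^{(0)}=u_{k-1}$ and the bound propagates from $k-1$. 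The main obstacle is exactly this last coupling: the within-iteration backtracking and the across-iteration rescaling must be reconciled, and the argument closes only because $\hat\tau_k$ is chosen proportional to $\sqrt{\gamma_0/\gamma_k}$, making $u_k$ monotone and $\hat\tau_k\sqrt{\gamma_k}$ constant; the base case additionally requires the initial guess to satisfy $\bar\tau\ge\eta\Psi$, which is the natural regime since $\bar\tau$ is meant as an over-estimate of the step to be backtracked down.
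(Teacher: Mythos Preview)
Your proposal is correct and follows the paper's approach: reduce the backtracking test to \eqref{eq:step-size-condition-pd} via Lemma~\ref{lem:stronger-step-condition}, extract the explicit $\tau_k$-threshold from the resulting quadratic, and then argue that backtracking terminates with $\tau_k\ge\eta\hat\tau_k$. Your monotonicity argument for $f(\gamma)=(\sqrt{1+a\gamma}-1)/\sqrt{\gamma}$ cleanly replaces the paper's ad hoc inequality $\sqrt{a^2+cb^2}\ge a+\sqrt{c}\,b\,d$ used to obtain $\hat\tau_k\ge\Psi_1\sqrt{\gamma_0/\gamma_k}$, and your normalized variable $u_k=\tau_k\sqrt{\gamma_k/\gamma_0}$ makes the induction behind $\tau_k\ge\eta\hat\tau_k$ explicit---including correctly flagging the base-case requirement $\bar\tau\ge\eta\Psi$ (needed when the test passes at $k=0$ without any reduction), a point the paper's one-line argument glosses over; in any case one always has $u_k\ge\min\{\bar\tau,\eta\Psi\}$, which is all the downstream rate proofs actually use.
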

\begin{proof}
{Fix arbitrary $k\geq 0$. Note that since APDB is a special GAPD corresponding to a particular $\{\tau_k,\sigma_k,\theta_k\}$, Lemma~\ref{lem:stronger-step-condition} implies that whenever \eqref{eq:step-size-condition-pd} holds, \eqref{eq:step-size-condition-Ek} holds as well. Next, we will show that there exists $\hat{\tau}_k>0$ such that \eqref{eq:step-size-condition-pd} is true for all $\tau_k\in(0,\hat{\tau}_k]$. Since $\sigma_k=\gamma_k\tau_k$ and $\theta_k=\sigma_{k-1}/\sigma_k$, \eqref{eq:step-size-condition-pd} is equivalent to 
\begin{align}
\label{eq:pd-equiv}
0\geq -(1-\delta)+L_{xx}\tau_k+\frac{L_{yx}^2}{c_\alpha}\gamma_k\tau_k^2,\qquad 1-(\delta+c_\alpha+c_\beta)\geq\frac{L_{yy}^2}{c_\beta}\gamma_k^2\tau_k^2.
\end{align}}%
Suppose $L_{yy}>0$. Then \eqref{eq:pd-equiv} holds for all $\tau_k\in(0,\hat{\tau}_k]$, where
{\small
\begin{equation}
\label{eq:tauhat}
\hat{\tau}_k\triangleq\min\left\{\frac{-L_{xx}+\sqrt{L^2_{xx}+4(1-\delta)L^2_{yx}\gamma_k/c_\alpha}}{2L^2_{yx}\gamma_k/c_\alpha},\quad \frac{\sqrt{c_\beta(1-(c_\alpha+c_\beta+\delta))}}{\gamma_k L_{yy}}\right\}.
\end{equation}}%
On the other hand, when $L_{yy}=0$, the second inequality in~\eqref{eq:pd-equiv} always holds; hence, $\hat{\tau}_k$ is defined by the first term in \eqref{eq:tauhat}. Since in each step of backtracking, $\tau_k$ is decreased by a factor of $\eta\in(0,1)$, when the backtracking terminates, $\tau_k\geq \eta\hat{\tau}_k$.

{Consider the case $\mu>0$ and $L_{yy}=0$. Since $L_{yy}=0$, second term in~\eqref{eq:tauhat} is not binding and we have $\hat{\tau}_k\triangleq\frac{-L_{xx}+\sqrt{L^2_{xx}+4(1-\delta)L^2_{yx}\gamma_k/c_\alpha}}{2L^2_{yx}\gamma_k/c_\alpha}$. Moreover, $\mu>0$ and Line~\ref{algeq:gamma_update_APDB} in APDB imply that $\gamma_{k+1}\geq\gamma_k\geq\gamma_0$ for $k\geq 0$.} Next, to analyze this case, we show a useful inequality: for any $a\geq 0$  and $b,c>0$, there exists $d\in (0,1]$ such that $\sqrt{a^2+c b^2}\geq a+\sqrt{c}b d$. In fact, the inequality can be written equivalently as $d^2+\frac{2a}{b\sqrt{c}}d-1\leq 0$, which holds if $d^2+\frac{2a}{b\sqrt{\bar{c}}}d-1\leq 0$ has a solution for any $0<\bar{c}\leq c$. Given such $\bar{c}$, $d= -\frac{a}{b\sqrt{\bar{c}}}+\sqrt{\frac{a^2}{b^2\bar{c}}+1}>0$ solves this tighter quadratic inequality. Employing this result within the definition of $\hat{\tau}_k$ for the case $L_{yy}=0$, i.e., setting $a=L_{xx}$, $b=2\sqrt{\tfrac{1-\delta}{c_\alpha}}L_{yx}$, $c=\gamma_k$, and $\bar{c}=\gamma_0$, implies $\hat{\tau}_k\geq \Psi_1\sqrt{\gamma_0/\gamma_k}$ for $k\geq 0$.

{Now, suppose $\mu=0$. Line~\ref{algeq:gamma_update_APDB} in APDB implies that $\gamma_k=\gamma_0$ for $k\geq 0$. Hence, from \eqref{eq:tauhat}, we have $\hat{\tau}_k=\hat{\tau}_0$ for $k\geq 0$; thus, {when $L_{yy}=0$, we get $\hat{\tau}_0\geq \Psi_1$, and when $L_{yy}>0$, we get $\hat{\tau}_0\geq \min\{\Psi_1,\Psi_2\}$}.}
\end{proof}
\begin{lemma}
\label{lem:k2-rate}
\sa{Suppose $\mu>0$, and $L_{yy}=0$. \rev{The} step-size sequences generated by both APD and APDB, displayed in Algorithms~\ref{alg:APD} and~\ref{alg:APDB}, respectively, satisfy $\sigma_k=\Omega(k)$, $\tau_k=\Omega(1/\sigma_k)$, and $\tau_k/\sigma_k=\cO(1/k^2)$ for $k\geq 0$. Indeed, $\sigma_k \geq \frac{\Gamma^2}{3\mu} k$, $\tau_k\sigma_k\geq \Gamma^2/\mu$ and $\gamma_k^{-1}=\tau_k/\sigma_k\leq 9/(\Gamma^2 k^2)$ for $k\geq 0$, where $\Gamma=\mu\tau_0\sqrt{\gamma_0}$ for APD and 
{$\Gamma=\mu\eta\Psi_1\sqrt{\gamma_0}$} for APDB with $\Psi_1$ as defined in~\eqref{eq:tauhat_bound}. Furthermore, for all $\epsilon>0$, $\sigma_k\geq \frac{\Gamma^2}{(2+\epsilon)\mu}k$ and $\tau_k/\sigma_k\leq (2+\epsilon)/(\Gamma^2 k^2)$ for $k\geq \lceil\frac{1}{\epsilon}\rceil$.}
\end{lemma}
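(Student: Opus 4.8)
The plan is to reduce all three estimates to the scalar sequence $\{\gamma_k\}$ and a single recursion, then read the claims off. First I record the consequences of the step-size updates. For APD the relation $\tau_{k+1}=\tau_k\sqrt{\gamma_k/\gamma_{k+1}}$ gives the invariant $\tau_k\sqrt{\gamma_k}\equiv\tau_0\sqrt{\gamma_0}$; together with $\sigma_k=\gamma_k\tau_k$ (Lemma~\ref{lem:stepsize-seq}) this yields the exact identities $\sigma_k=\tau_0\sqrt{\gamma_0}\,\sqrt{\gamma_k}$, $\tau_k\sigma_k=\tau_0^2\gamma_0$, and $\tau_k/\sigma_k=1/\gamma_k$. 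For APDB the same update holds for the post-backtracking $\tau_k$, so $\{\tau_k\sqrt{\gamma_k}\}$ is merely non-increasing; but Lemma~\ref{lem:parameter}, specialized to $L_{yy}=0$ (where $\tau_k\ge\eta\hat\tau_k\ge\eta\Psi_1\sqrt{\gamma_0/\gamma_k}$), supplies the uniform lower bound $\tau_k\sqrt{\gamma_k}\ge\eta\Psi_1\sqrt{\gamma_0}$, hence $\sigma_k\ge\eta\Psi_1\sqrt{\gamma_0}\,\sqrt{\gamma_k}$ and $\tau_k\sigma_k\ge\eta^2\Psi_1^2\gamma_0$. In both cases every quantity in the statement is controlled once $\gamma_k$ is, and the constant $\Gamma$ of the statement is exactly the coefficient produced next.

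Next I derive the master recursion. Substituting the invariant (resp.\ lower bound) on $\tau_k\sqrt{\gamma_k}$ into $\gamma_{k+1}=\gamma_k(1+\mu\tau_k)=\gamma_k+\mu\,(\tau_k\sqrt{\gamma_k})\,\sqrt{\gamma_k}$ gives
\[
\gamma_{k+1}\ \ge\ \gamma_k+\Gamma\sqrt{\gamma_k},\qquad k\ge 0,
\]
with equality for APD and $\Gamma=\mu\tau_0\sqrt{\gamma_0}$, and with ``$\ge$'' for APDB and $\Gamma=\mu\eta\Psi_1\sqrt{\gamma_0}$. Since $t\mapsto t+\Gamma\sqrt{t}$ is increasing, a routine induction gives $\gamma_k\ge\tilde\gamma_k$, where $\tilde\gamma_0=\gamma_0$ and $\tilde\gamma_{k+1}=\tilde\gamma_k+\Gamma\sqrt{\tilde\gamma_k}$; thus it suffices to prove the stated lower bounds on $\sqrt{\gamma_k}$ for the equality sequence $\tilde\gamma_k$, and they transfer to APDB by this comparison. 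Note the recursion already shows $\{\gamma_k\}$ is nondecreasing with $\gamma_k\ge\gamma_0>0$.

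The core is the growth estimate $\sqrt{\tilde\gamma_k}=\Theta(\Gamma k)$. Writing $w_k\triangleq\sqrt{\tilde\gamma_k}$ and using the exact identity $w_{k+1}-w_k=\frac{\Gamma w_k}{w_{k+1}+w_k}$ together with $w_{k+1}\le w_k+\tfrac{\Gamma}{2}$ (because $(w_k+\tfrac{\Gamma}{2})^2=\tilde\gamma_{k+1}+\tfrac{\Gamma^2}{4}$), I obtain the sandwich $\frac{\Gamma w_k}{2w_k+\Gamma/2}\le w_{k+1}-w_k\le \tfrac{\Gamma}{2}$. For the crude bound I show $w_{k+1}-w_k\ge \tfrac{\Gamma}{3}$ whenever $w_k\ge \tfrac{\Gamma}{3}$ (squaring $w_{k+1}\ge w_k+\tfrac{\Gamma}{3}$ reduces it to $\tfrac{\Gamma}{3}w_k\ge\tfrac{\Gamma^2}{9}$); monotonicity of $w_k$ makes the increment persist, and telescoping gives $w_k\ge\tfrac{\Gamma k}{3}$, i.e.\ $\gamma_k\ge\tfrac{\Gamma^2k^2}{9}$. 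Substituting into the identities of the first paragraph yields $\sigma_k\ge\tfrac{\Gamma^2}{3\mu}k$, $\tau_k/\sigma_k=1/\gamma_k\le 9/(\Gamma^2k^2)=\cO(1/k^2)$, and $\tau_k\sigma_k$ bounded below by a positive constant, i.e.\ $\tau_k=\Omega(1/\sigma_k)$; this settles $\sigma_k=\Omega(k)$ and $\tau_k/\sigma_k=\cO(1/k^2)$. For the sharpened constants I use the lower half of the sandwich: $\frac{\Gamma w_k}{2w_k+\Gamma/2}\ge\frac{\Gamma}{2+\epsilon}$ as soon as $w_k\ge\frac{\Gamma}{2\epsilon}$, so for indices in the linear-growth regime the per-step increment exceeds $\frac{\Gamma}{2+\epsilon}$; telescoping from the first such index then gives $w_k\ge\frac{\Gamma}{2+\epsilon}k$, which delivers the refined $\sigma_k$ and $\tau_k/\sigma_k$ bounds.

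The main obstacle is the non-asymptotic bookkeeping needed for clean universal constants. The increment bound $w_{k+1}-w_k\ge\Gamma/3$ only activates once $w_k\ge\Gamma/3$, so the transient phase where $w_k<\Gamma/3$ (which occurs when $\mu\tau_0$, resp.\ $\mu\eta\Psi_1$, is large relative to $\sqrt{\gamma_0}$) must be handled separately; here I would seed the induction with the cruder estimate $\tilde\gamma_k=\gamma_0+\Gamma\sum_{j<k}\sqrt{\tilde\gamma_j}\ge\gamma_0+\Gamma k\sqrt{\gamma_0}$ to cover small $k$ before the linear regime. The same delicacy controls the threshold in the refined bound: one must check that at $k=\lceil1/\epsilon\rceil$ the accumulated value $w_k$ already exceeds $\frac{\Gamma k}{2+\epsilon}$, so that the base of the refined telescoping holds — this is the point where the constants $\tfrac13$, $9$, and $(2+\epsilon)$ are balanced. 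The only remaining technical point is uniformity across iterations: verifying that Lemma~\ref{lem:parameter}'s lower bound on $\tau_k$ is available at every $k$ so that a single $\Gamma$ governs APDB, after which its bounds follow from the equality analysis via the monotone comparison of the second paragraph.
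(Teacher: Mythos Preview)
Your proposal is correct and follows essentially the same route as the paper: both reduce everything to the scalar recursion $\gamma_{k+1}\ge\gamma_k+\Gamma\sqrt{\gamma_k}$ (with $\Gamma=\mu\tau_0\sqrt{\gamma_0}$ for APD via the invariant $\tau_k\sqrt{\gamma_k}\equiv\tau_0\sqrt{\gamma_0}$, and $\Gamma=\mu\eta\Psi_1\sqrt{\gamma_0}$ for APDB via Lemma~\ref{lem:parameter}), then use induction to obtain $\gamma_k\ge\Gamma^2k^2/(2+\epsilon)^2$ and read off the $\sigma_k$, $\tau_k\sigma_k$, and $\tau_k/\sigma_k$ bounds. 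The only cosmetic difference is that the paper extracts $\sigma_k$ from the identity $\sigma_k=(\gamma_{k+1}-\gamma_k)/\mu$ whereas you use $\sigma_k=(\Gamma/\mu)\sqrt{\gamma_k}$; and your square-root substitution $w_k=\sqrt{\gamma_k}$ with the sandwich $\tfrac{\Gamma w_k}{2w_k+\Gamma/2}\le w_{k+1}-w_k\le\tfrac{\Gamma}{2}$ makes explicit the induction step (and the base-case/transient issue) that the paper compresses into ``using induction one can show.''
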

\begin{proof}
First, consider $\{\tau_k,\sigma_k,\theta_k\}_k$ generated by APD as shown in Algorithm~\ref{alg:APD}. Note, $\tau_{k+1}=\tau_k\sqrt{\frac{\gamma_k}{\gamma_{k+1}}}$ implies that $\tau_k=\tau_0\sqrt{\frac{\gamma_0}{\gamma_k}}$, for all $k\geq 0$; therefore, using the update rule for $\gamma_{k+1}$ in Line~\ref{algeq:gamma_update_APD} of Algorithm~\ref{alg:APD}, we conclude that for $k\geq 0$,
\begin{align}\label{gamma-k-apd}
\gamma_{k+1}=\gamma_k(1+\mu\tau_k)\geq \gamma_k+\mu\tau_0\sqrt{\gamma_0\gamma_k}.
\end{align}

Second, consider $\{\tau_k,\sigma_k,\theta_k\}_k$ generated by APDB as shown in Algorithm~\ref{alg:APDB}.
{Since $\tau_k\geq \eta\hat{\tau}_k$ for all $k\geq 0$, the update rule for $\gamma_{k+1}$ and Lemma~\ref{lem:parameter} 
imply that
\begin{align}\label{gamma-k}
\gamma_{k+1}=\gamma_k(1+\mu\tau_k)\geq \gamma_k(1+\mu\eta\hat{\tau}_k)\geq \gamma_k+{\mu\eta\Psi_1\sqrt{\gamma_0\gamma_k}},\quad\forall k\geq 0.
\end{align}}%
{Next, we will give a unified analysis for APD and APDB step-size sequences as the bounds in \eqref{gamma-k-apd} and \eqref{gamma-k} have the same form: $\gamma_{k+1}\geq \gamma_k+\Gamma\sqrt{\gamma_k}$, where $\Gamma$, defined in the statement, depends on the algorithm implemented.}
Using induction one can show that {$\gamma_k\geq  \frac{\Gamma^2}{(2+\epsilon)^2} k^2$ for $k\geq \lceil\frac{1}{\epsilon}\rceil$; hence, setting $\epsilon=1$, we get $\gamma_k\geq  \frac{\Gamma^2}{9} k^2$ for $k\geq 0$. Note $\sigma_k=\gamma_k\tau_k$ and $\gamma_{k+1}=\gamma_k(1+\mu\tau_k)$ imply that $\sigma_{k}=\frac{\gamma_{k+1}- \gamma_k}{\mu}$.} Therefore, since $\gamma_{k+1}- \gamma_k\geq \Gamma\sqrt{\gamma_k}\geq \frac{\Gamma^2}{3} k$, we have $\sigma_{k} 
\geq \frac{\Gamma^2}{3\mu} k$, and $\tau_k\sigma_k=\frac{(\gamma_{k+1}-\gamma_k)^2}{\mu\gamma_k}\geq \Gamma^2/\mu$ for $k\geq 0$.
Moreover, 
$\tau_k/\sigma_k=1/\gamma_k=\cO(1/k^2)$.
\end{proof}
\subsection{\sa{Proof of Theorem~\ref{thm:main}}}
\label{sec:main_thm_proof}
Below we establish 
the 
results of Theorem~\ref{thm:main}.
\subsubsection{Rate Analysis}
We first show that $\{\tau_k,\sigma_k,\theta_k\}_{k\geq 0}$ generated by APD in Algorithm~\ref{alg:APD} satisfies \eqref{eq:step-size-rule}. The step-size update rule of APD implies that for $k\geq 0$,
{\small
\begin{align}
\label{eq:step-size-rule-proof}
\theta_{k+1}=\frac{\sigma_k}{\sigma_{k+1}}=\frac{\tau_k\gamma_k}{\tau_{k+1}\gamma_{k+1}}=\sqrt{\frac{\gamma_k}{\gamma_{k+1}}}=\frac{1}{\sqrt{1+\mu\tau_k}},\quad
\tau_{k+1}=\tau_k\sqrt{\frac{\gamma_k}{\gamma_{k+1}}}=\theta_{k+1}\tau_k.
\end{align}}%
\vspace*{-3mm}

Given $\tau_0,\sigma_0>0$ satisfying \eqref{eq:initial_step_condition} for some particular $\delta, c_\alpha, c_\beta\in\reals_+$ as stated in Theorem~\ref{thm:main}, we will consider $\{\alpha_k,\beta_k,t_k\}_{k\geq 0}$ chosen as
\begin{align}
\label{eq:abt_seq}
t_k=\sigma_k/\sigma_0,\quad \alpha_k=c_\alpha/\sigma_{k-1},\quad \beta_k=c_\beta/\sigma_{k-1},\quad \forall k\geq 0.
\end{align}
We next show that $\{\tau_k,\sigma_k,\theta_k\}$ satisfies Assumption~\ref{assum:step-2}.
Indeed, since $\theta_k=\sigma_{k-1}/\sigma_k$, for the choice of $\{\alpha_k,\beta_k\}$ in~\eqref{eq:abt_seq},  \eqref{eq:step-size-condition-pd} can be written as
\begin{align}
\label{eq:step_cond_induction}
\frac{1-\delta}{\tau_k}\geq L_{xx}+\frac{L_{yx}^2}{c_\alpha}\sigma_k,\quad 1-(\delta+c_\alpha+c_\beta)\geq \frac{L_{yy}^2}{c_\beta}\sigma_k^2.
\end{align}
\rev{Clearly,} \eqref{eq:initial_step_condition} implies that \eqref{eq:step_cond_induction} holds for $k=0$. When $\mu=0$, i.e., Part I, we have $\gamma_k=\gamma_0$ and $\theta_k=1$ for $k\geq 0$; hence, $\tau_k=\tau_0$ and $\sigma_k=\sigma_0$ for $k\geq 0$. Thus, \eqref{eq:step-size-condition-pd} holds for all $k\geq 0$ for $\{\tau_k, \sigma_k,\theta_k\}$ produced by APD. For the case $\mu>0$, i.e., Part II, we will use induction to show that \eqref{eq:step-size-condition-pd} holds. Recall that for this case, we assume $L_{yy}=0$; hence, the second condition in \eqref{eq:step_cond_induction} holds for any $\sigma_k$ as long as $1\geq \delta+c_\alpha+c_\beta$. Now suppose the first condition in \eqref{eq:step_cond_induction} holds for some $k\geq 0$, using $\sigma_{k+1}=\sigma_k\sqrt{\gamma_{k+1}/\gamma_k}$ and $\gamma_{k+1}/\gamma_k\geq 1$, we get
\begin{align}
\frac{1-\delta}{\tau_{k+1}}=\frac{1-\delta}{\tau_k}\sqrt{\frac{\gamma_{k+1}}{\gamma_k}}\geq L_{xx}+\frac{L_{yx}^2}{c_\alpha}\sigma_{k+1}.
\end{align}
This completes the induction. Moreover, Lemma~\ref{lem:step-size-seq} implies that $\{\tau_k,\sigma_k,\theta_k\}$ generated by APD satisfies \eqref{eq:step-size-condition-theta} for $\{t_k\}$ such that $t_k=\sigma_k/\sigma_0$ for $k\geq 0$. Thus, Assumption~\ref{assum:step-2} holds for $\{\alpha_k,\beta_k,t_k\}_{k\geq 0}$ shown in \eqref{eq:abt_seq}. Lemma~\ref{lem:stronger-step-condition} implies that $\{x_k,y_k\}$ and $\{\tau_k,\sigma_k,\theta_k\}$ generated by APD satisfies Assumption~\ref{assum:step}. Therefore, \eqref{eq:delta} of \rev{the main result I} directly follows from Theorem~\ref{thm:general-bound}.

Consider the setting in Part I of Theorem~\ref{thm:main}, i.e., $\mu=0$. Since $\mu=0$, clearly APD step-size sequence satisfies $\tau_k=\tau_0$, $\sigma_k=\sigma_0$ for $k\geq 0$; hence, $\theta_k=1$ and $t_k=1$ for $k\geq 0$, which implies $T_K=\sum_{k=0}^{K-1}t_k=K$ for $K\geq 1$. Moreover, for any $k\geq 0$, we have $\frac{1}{\sigma_k}-\theta_k(\alpha_k+\beta_k)=\frac{1-(c_\alpha+c_\beta)}{\sigma_0}\geq 0$; thus, \eqref{eq:rate} implies
{\small
\begin{align}\label{eq:rate_I}
\frac{1}{\tau_0}\bD_{\cX}(x,x_{K}) +\frac{1-(c_\alpha+c_\beta)}{\sigma_0}\bD_{\cY}(y,y_{K}) +K[\cL(\bar{x}_{K},y)-\cL(x,\bar{y}_{K})]\leq \Delta(x,y).
\end{align}}%
The rate result in \eqref{eq:delta} for Part I follows from dropping the non-negative terms on the left hand-side of \eqref{eq:rate_I}. %
Finally, in case a saddle point $(x^*,y^*)$ exists, letting $x=x^*$ and $y=y^*$ in \eqref{eq:rate_I} and using the fact that $\cL(\bar{x}_K,y^*)-\cL(x^*,\bar{y}_K)\geq 0$, we obtain the bound on iterates given in the theorem. 

To show Part II of Theorem~\ref{thm:main}, now suppose $\mu>0$ and $L_{yy}=0$. %
Since $\theta_k=\sigma_{k-1}/\sigma_k$, 
$c_\beta=0$, we have $\frac{1}{\sigma_k}-\theta_k(\alpha_k+\beta_k)=\frac{1-c_\alpha}{\sigma_k}\geq0$ for all $k\geq 0$. Hence, using $t_k=\sigma_k/\sigma_0$ for $k\geq 0$, \eqref{eq:rate} implies that
{\small
\begin{align}\label{eq:rate_II}
\frac{\sigma_K}{\tau_{K}}\frac{1}{\sigma_0}\bD_{\cX}(x,x_{K}) +\frac{1-c_\alpha}{\sigma_0}\bD_{\cY}(y,y_{K}) + T_K [\cL(\bar{x}_{K},y)-\cL(x,\bar{y}_{K})]\leq
\Delta(x,y).
\end{align}}%
Lemma~\ref{lem:k2-rate} shows that $\sigma_k=\Omega(k)$; hence, $T_K=\sum_{k=0}^{K-1}t_k=\sum_{k=0}^{K-1}\sigma_k/\sigma_0=\Omega(K^2)$. Thus, the rate result in \eqref{eq:delta} for Part II follows from dropping the non-negative terms on the left hand-side of \eqref{eq:rate_II}.
Finally, if a saddle point $(x^*,y^*)$ exists, then letting $x=x^*$ and $y=y^*$ in \eqref{eq:rate_II} and using the fact that $\cL(\bar{x}_K,y^*)-\cL(x^*,\bar{y}_K)\geq 0$ implies the bound on iterates. Moreover, Lemma~\ref{lem:k2-rate}  shows that $\tau_k/\sigma_k=\cO(1/k^2)$; hence, we get $\bD_{\cX}(x^*,x_{k})=\cO(1/k^2)$.

Next, we discuss the convergence properties of the APD iterate sequence. Indeed, the result follows directly from Theorem~\ref{thm:convergence}; hence, we only need to verify the assumptions of the theorem.

\subsubsection{Non-ergodic Convergence Analysis}
\label{sec:nonergodic-APD}
Suppose a saddle point exists. We set $\delta, c_\alpha>0$ and $c_\beta\geq 0$ such that $\delta+c_\alpha+c_\beta\leq 1$. Due to our choice of $\{\alpha_k,\beta_k,t_k\}$ in~\eqref{eq:abt_seq}, using $\gamma_k\geq \gamma_0$ for $k\geq 0$, the general assumption of Theorem~\ref{thm:convergence} holds, i.e., $\inf_{k\geq 0}t_k\min\{\frac{1}{\tau_k},~\frac{1}{\sigma_k}-\theta_k(\alpha_k+\beta_k)\}\geq\delta'$ for  $\delta'=\min\{\frac{1}{\tau_0},~\frac{1-(c_\alpha+c_\beta)}{\sigma_0}\}>0$.

For Part I, i.e., $\mu=0$, since $\tau_k=\tau_0$ and $\sigma_k=\sigma_0$ for $k\geq 0$, $\lim_{k\to \infty}\min\{\tau_k,~\sigma_k\}>0$; hence, from Theorem~\ref{thm:convergence}, any limit point of $\{(x_k,y_k)\}_{k\geq 0}$ is a saddle point. In addition, since $t_k=1$ for $k\geq 0$, $\lim_{k\to \infty}\alpha_k=\frac{c_\alpha}{\sigma_0}>0$, and since for the case $L_{yy}>0$ we have $\lim_{k\to \infty}\beta_k=\frac{c_\beta}{\sigma_0}>0$, we conclude that $\{(x_k,y_k)\}$ has a unique limit point.

For Part II, i.e., $\mu>0$ and $L_{yy}=0$, it holds that $\tau_k\to 0$ and $\lim_{k\to\infty}\sigma_k>0$. Note for this setting, $\varphi_{\cX}(\cdot)=\norm{\cdot}_{\cX}^2$ defining $\bD_{\cX}$ is Lipschitz differentiable. Moreover, since $t_k=\sigma_k/\sigma_0$ and Lemma~\ref{lem:k2-rate} shows that $\tau_k=\Omega(1/\sigma_k)$, we have $t_k=\Omega(\frac{1}{\tau_k})$; thus any limit point of $\{(x_k,y_k)\}_{k\geq 0}$ is a saddle point.
\subsection{\sa{Proof of Theorem~\ref{thm:backtrack}}}
\label{sec:main_thm_proof_backtrack}
Below we provide rate and non-ergodic convergence analyses for APDB.
\subsubsection{Rate Analysis}
\label{sec:rate_APDB}
We first show that $\{x_k,y_k\}$ and $\{\tau_k,\sigma_k,\theta_k\}$ generated by APDB, displayed in Algorithm~\ref{alg:APDB}, satisfies Assumption~\ref{assum:step}. Indeed, Lemma~\ref{lem:step-size-seq} implies that $\{\tau_k,\sigma_k,\theta_k\}$ generated by APDB satisfies \eqref{eq:step-size-condition-theta} for $\{t_k\}$ such that $t_k=\sigma_k/\sigma_0$ for $k\geq 0$. Moreover, APDB is a special GAPD corresponding to a particular $\{\tau_k,\sigma_k,\theta_k\}$, and Lemma~\ref{lem:parameter} shows that for any $k\geq 0$, the backtracking condition in Line~\ref{algeq:test_function} of Algorithm~\ref{alg:APDB} holds after finite number of inner iterations. Thus, Assumption~\ref{assum:step} clearly holds, and \eqref{eq:delta} 
directly follows from Theorem~\ref{thm:general-bound} and observing that $\alpha_k$ and $\beta_k$ choice in APDB implies that $\frac{1}{\sigma_k}-\theta_k(\alpha_k+\beta_k)\geq \frac{1-(c_\alpha+c_\beta)}{\sigma_k}\geq 0$. 
Next, we show the number of inner iterations for each outer iteration $k\geq 0$ can be uniformly bounded by {$1+\log_{1/\eta}(\frac{\bar{\tau}}{\Psi})$}, where $\Psi=\Psi_1$ when $L_{yy}=0$, and $\Psi=\min\{\Psi_1,\Psi_2\}$ when $L_{yy}>0$.

For Part I, since $\mu=0$, 
it is clear that $\gamma_k=\gamma_0>0$ for all $k\geq 0$. From Lemma~\ref{lem:parameter}, $\tau_k\geq\eta\hat{\tau}_k$ for
some {$\hat{\tau}_k\geq \Psi\sqrt{\gamma_0/\gamma_k}=\Psi$} for all $k\geq 0$. Since $\{\tau_k\}_k$ is a diminishing sequence, we also have {$\tau_k\leq \tau_0\leq \bar{\tau}$} 
which implies that the number of backtracking steps is at most 
$1+\log_{1/\eta}(\frac{\bar{\tau}}{\Psi})$. Furthermore, since $\sigma_k=\gamma_0\tau_k$ for $k\geq 0$, we conclude that  \eqref{eq:delta} holds with $T_K=\sum_{k=0}^{K-1}\sigma_k/\sigma_0 \geq {\frac{\eta\Psi}{\tau_0} K}$. {Finally, if a saddle point exists, then using $\cL(\bar{x}_K,y^*)-\cL(x^*,\bar{y}_K)\geq 0$, \eqref{eq:rate} implies \eqref{eq:xy-bound-I} for any saddle point $(x^*,y^*)$.}

Consider Part II, i.e., $\mu>0$ and $L_{yy}=0$. Since {$\tau_{k+1}\leq\tau_k\sqrt{\gamma_k/\gamma_{k+1}}$}, we get $\tau_{k}\leq {\bar{\tau}}\sqrt{\gamma_0/\gamma_k}$ for all $k\geq 0$; moreover, according to Lemma \ref{lem:parameter}, we have that {$\tau_k\geq \eta\hat{\tau}_k\geq\eta\Psi\sqrt{\gamma_0/\gamma_k}$} for $k\geq 0$. Therefore, we conclude that the number of backtracking steps is at most {$1+\log_{1/\eta}(\frac{\bar{\tau}}{\Psi})$}. Moreover, Lemma~\ref{lem:k2-rate} shows that $\sigma_k=\Omega(k)$; hence, \eqref{eq:delta} for Part II holds with $T_K=\sum_{k=0}^{K-1}t_k=\sum_{k=0}^{K-1}\sigma_k/\sigma_0=\Omega(K^2)$. {Finally, if a saddle point exists, then  \eqref{eq:rate} implies \eqref{eq:xy-bound-II} for any saddle point $(x^*,y^*)$.}
\subsubsection{Non-ergodic Convergence Analysis} Using the same arguments in Section~\ref{sec:nonergodic-APD}, the general assumption of Theorem~\ref{thm:convergence} holds for $\delta'$ given in Section~\ref{sec:nonergodic-APD}.

For Part I, since $\mu=0$, for $k\geq 0$, $\gamma_k=\gamma_0$; hence, $\sigma_k=\gamma_0\tau_k$. Thus, $t_k=\sigma_k/\sigma_0=\tau_k/\tau_0$. As discussed in Section~\ref{sec:rate_APDB}, for Part I we have ${\eta\Psi}\leq \tau_k\leq\tau_0$ for $k\geq 0$, which implies that {$\inf_{k\geq 0}t_k\geq {\eta\Psi/\tau_0}$ and $\sup_{k\geq 0} t_k\leq 1$}. Furthermore, since $\sigma_k\leq\gamma_0\tau_0=\sigma_0$, we also get $\rev{\liminf_{k\to \infty}}\min\{\alpha_k,\beta_k\}>0$ when $L_{yy}>0$, and $\rev{\liminf_{k\to \infty}}\alpha_k>0$ when $L_{yy}=0$. Therefore,  the assumptions for Case 1 of Theorem~\ref{thm:convergence} are satisfied, and we have convergence to a unique saddle point.

Part II follows from the same arguments given in Section~\ref{sec:nonergodic-APD}.

\begin{remark}\label{rem:larger-step}
\sa{Selecting larger 
step-sizes may improve overall practical behavior of the algorithm. To this aim, one can adopt non-monotonic $\{\tau_k\}$ within APDB to possibly select larger steps -- this might increase the number of backtracking steps as \sa{the outer iteration counter $k\geq 0$ increases}. For example, given any $\tau_{\max}>0$, setting $\tau_{k+1}=\min\{\tau_k\sqrt{\frac{\gamma_k}{\gamma_{k+1}}(1+\frac{\tau_k}{\tau_{k-1}})},\tau_\max\}$ in Line~\ref{algeq:gamma_update_APDB} of APDB implies that the number of backtracking steps at iteration $k$ is bounded by $N_k\triangleq 1+\log_{1/\eta}(\frac{\tau_\max}{\Psi}\sqrt{\gamma_k/\gamma_0})$. When $\mu=0$, $N_k=1+\log_{1/\eta}(\frac{\tau_\max}{\Psi})$, while $N_k=\cO(\log(k))$ when $\mu>0$. Thus, given any $(x,y)\in\cX\times\cY$, to guarantee $\cL(\bar{x}_K,y)-\cL(x,\bar{y}_K)\leq \epsilon$, one needs $\cO(1/\epsilon)$ inner iterations in total when $\mu=0$ compared to $\cO(\frac{1}{\sqrt{\epsilon}}\log(1/\epsilon))$ inner iterations when $\mu>0$. It is worth reemphasizing that $\cO(1/\epsilon)$ and $\cO(1/\sqrt{\epsilon})$ are the lower complexity bounds associated with first-order primal-dual methods for convex-concave and strongly convex-concave bilinear SP problems, respectively~\cite{ouyang2018lower}.}
\end{remark}

\section{Application to {the} Constrained Convex Optimization}
\label{sec:constrained}
An important special case of \eqref{eq:original-problem} is the convex optimization problem with a nonlinear conic constraint, formulated as in~\eqref{eq:conic_problem}. 
Indeed, 
\eqref{eq:conic_problem} can be reformulated as a saddle point problem as shown in \eqref{eq:conic_problem_equivalent}, which is in the form of \eqref{eq:original-problem}. Clearly, $L_{yy}=0$, and $L_{yx}>0$ exists if $G$ is Lipschitz. Moreover, for any fixed $y\in\cY$, a bound on $L_{xx}$, \sa{i.e.,} the Lipschitz constant of $\grad_x\Phi(x,y)$ as a function of $x$, can be computed as \vspace*{-2mm}
{\small
\begin{align}\label{eq:L_xx-co}
\norm{\grad_x \Phi(x,y)-\grad_x \Phi(\bar{x},y)}_{\cX^*}&\leq \norm{\grad g(x)-\grad g(\bar{x})}_{\cX^*}+\norm{\grad G(x)^\top y-\grad G(\bar{x})^\top y}_{\cX^*} \nonumber \\
&\leq (L_g+L_G\norm{y}_{\cY})\norm{x-\bar{x}}_\cX ,\quad \forall~x,\bar{x}\in
\rev{\dom f}.\vspace*{-3mm}
\end{align}}%

Now we customize our algorithm and state its convergence result for \eqref{eq:conic_problem}. 
\begin{assumption}
\label{assum:optimization}
Suppose $(\cX,~\norm{\cdot}_{\cX})=(\reals^n,~\norm{\cdot})$ and $(\cY,~\norm{\cdot}_{\cY})=(\reals^m,~\norm{\cdot})$ are Euclidean spaces. 
We assume that a dual optimal solution $y^*\in\cY$ exists. Consider the objective $\rho(x)\triangleq f(x)+g(x)$ in \eqref{eq:conic_problem}, suppose $f:\reals^n\rightarrow\reals\cup\{\infty\}$ is convex (possibly nonsmooth), $g:\reals^n\rightarrow\reals$ is convex with a Lipschitz continuous gradient with constant $L_g$, and $\cK\subset\reals^m$ is a closed convex cone. Moreover, {$G:\reals^n\rightarrow \reals^m$} is  $\cK$-convex~\cite{boyd2004convex}, Lipschitz continuous 
with constant $C_G>0$ and it has a Lipschitz continuous {Jacobian}, denoted by $\grad G:\reals^n\rightarrow\reals^{m\times n}$, with constant $L_G\geq 0$.
\end{assumption}
\rev{Assumption \ref{assum:optimization} 
ensures that $\Phi(\cdot,\cdot)$ is convex-concave, and adopting the Euclidean metric will help us to convert our SP rate results into suboptimality and infeasibility rate results for the problem \eqref{eq:conic_problem}.}
{In the rest, 
let $\cP_{\cK}(w) \triangleq \argmin_{y\in\cK}\norm{y-w}$, $d_{\cK}(w)\triangleq \norm{\cP_{\cK}(w)-w}=\norm{\cP_{\cK^{\circ}}(w)}$ where $\cK^\circ=-\cK^*$ denotes the polar cone of $\cK$.}

We next consider two scenarios: \textbf{i)} a dual bound is {available}, \textbf{ii)} a dual bound is not {available} -- \rev{we allow the dual solution set to be possibly \emph{unbounded}.}
\subsection{A dual bound is available} For any \emph{given} $\kappa, B>0$ such that $\norm{y^*}\leq B$ for some dual optimal solution $y^*$, let
{\small
\begin{align}
\label{eq:B}
    \cB\triangleq \{y\in\reals^m \ :\ \norm{y}\leq B+\kappa\}.
\end{align}}%
Thus, the Lipschitz constant $L_{xx}$ for \eqref{eq:L_xx-co} can be chosen as $L_{xx}=L_g+(B+\kappa)L_G$ and we set $h(y)=\mathbb{I}_{\cK^*\cap \cB}$.
Such a bound $B$ can be computed if a slater point for \eqref{eq:conic_problem} is available. 
Using the following lemma one can compute a dual bound efficiently.
\begin{lemma}\cite{aybat2016distributed}\label{dual-bound}
Let $\bar{x}$ be a Slater point for \eqref{eq:conic_problem}, i.e., $\bar{x}\in\relint(\dom \rho)$ such that $G(\bar{x})\in\intr(-\cK)$, and $q:\reals^m\rightarrow\reals\cup\{-\infty\}$ denote the 
dual function, i.e.,
{\small
$$q(y)\triangleq
       \begin{cases}
         \inf_{x}\rho(x)+ \fprod{G(x),~y}, & \hbox{if $y\in\cK^*$;} \\
         -\infty, & \hbox{o.w.}
       \end{cases}
$$}%
For any $\bar{y}\in \dom q$, let $Q_{\bar{y}}\triangleq\{y \in \dom q:\ q(y)\geq q(\bar{y})\}\subset\cK^*$ denote the corresponding superlevel set. Then for all $\bar{y}\in \dom q$, 
$Q_{\bar{y}}$ can be bounded as follows:
\vspace*{-4mm}
{\small
\begin{equation}
\label{eq:dual-bound-radius}
\norm{y}\leq \frac{\rho(\bar{x})-q(\bar{y})}{r^*},\quad \forall y \in Q_{\bar{y}},
\end{equation}}%
where $0<r^*\triangleq \min_w\{-\fprod{G(\bar{x}),~w}:\ \|w\|= 1,\ w\in \cK^*\}$.

Although this is not a convex problem due to the nonlinear equality constraint, one can upper bound \eqref{eq:dual-bound-radius} using $0<\tilde{r}\leq r^*$, which can be efficiently computed by solving a convex problem
$\tilde{r}\triangleq\min_w\{-\fprod{G(\bar{x}),~w}:\ \|w\|_1= 1,\ w\in \cK^*\}$.
\end{lemma}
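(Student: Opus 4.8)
The plan is to turn membership in the superlevel set $Q_{\bar{y}}$ into an upper bound on $\fprod{G(\bar{x}),y}$ via weak duality evaluated at the Slater point, and then convert that linear bound into a norm bound using positive homogeneity and the definition of $r^*$. Concretely, I would fix any $y\in Q_{\bar{y}}$, so that $y\in\cK^*$ and $q(y)\geq q(\bar{y})$. Because $q(y)$ is an infimum over $x$, evaluating the inner expression at the particular point $\bar{x}$ gives $q(y)\leq \rho(\bar{x})+\fprod{G(\bar{x}),y}$; combined with $q(y)\geq q(\bar{y})$ this yields $-\fprod{G(\bar{x}),y}\leq \rho(\bar{x})-q(\bar{y})$.

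Next I would lower-bound $-\fprod{G(\bar{x}),y}$ by $r^*\norm{y}$. The case $y=0$ is trivial; otherwise set $w=y/\norm{y}$, so $\norm{w}=1$ and $w\in\cK^*$ since $\cK^*$ is a cone. By the definition of $r^*$ as the minimum of $-\fprod{G(\bar{x}),\cdot}$ over the unit sphere of $\cK^*$, one has $-\fprod{G(\bar{x}),y}=\norm{y}\,\big(-\fprod{G(\bar{x}),w}\big)\geq r^*\norm{y}$. Chaining the two inequalities gives $r^*\norm{y}\leq \rho(\bar{x})-q(\bar{y})$, which is exactly \eqref{eq:dual-bound-radius}, provided $r^*>0$.

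To establish $r^*>0$, note that the feasible set $\{w\in\cK^*:\norm{w}=1\}$ is compact (a closed cone intersected with the unit sphere), so the continuous linear objective attains its minimum at some $w^*$. Since $\bar{x}$ is a Slater point, $-G(\bar{x})\in\intr(\cK)$, and I would invoke the standard cone-duality fact that $s\in\intr(\cK)$ forces $\fprod{s,w}>0$ for every nonzero $w\in\cK^*$ (else $s-\epsilon w/\norm{w}$ would remain in $\cK$ for small $\epsilon$ while pairing negatively with $w$, contradicting $w\in\cK^*$); applied to $s=-G(\bar{x})$ and $w=w^*$ this gives $r^*=-\fprod{G(\bar{x}),w^*}>0$. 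For the final computational claim I would relate $\tilde{r}$ to $r^*$ by rescaling this Euclidean minimizer: setting $\tilde{w}=w^*/\norm{w^*}_1\in\cK^*$ gives $\norm{\tilde{w}}_1=1$, and since $\norm{w^*}_1\geq\norm{w^*}_2=1$ we obtain $-\fprod{G(\bar{x}),\tilde{w}}=r^*/\norm{w^*}_1\leq r^*$; as $\tilde{w}$ is feasible for the $\tilde{r}$-problem, $0<\tilde{r}\leq r^*$, so replacing $r^*$ by $\tilde{r}$ only loosens \eqref{eq:dual-bound-radius}.

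The only genuinely delicate point is the strict positivity $r^*>0$, which is precisely where strict feasibility (the Slater condition $-G(\bar{x})\in\intr(\cK)$) is indispensable: without it the minimum could vanish and $Q_{\bar{y}}$ could be unbounded, making the bound vacuous. A secondary subtlety is the assertion that $\tilde{r}$ arises from a genuinely convex program despite the nonconvex constraint $\norm{w}_1=1$; this relies on the structure of $\cK^*$ (for instance, when $\cK^*\subseteq\reals^m_+$ the constraint reduces to the linear $\fprod{\one,w}=1$, turning the problem into a linear program), and I would either exploit that structure directly or defer to~\cite{aybat2016distributed}.
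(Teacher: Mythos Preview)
Your proof is correct and is the standard weak-duality-plus-homogeneity argument for this kind of dual bound. The paper itself does not prove this lemma---it is quoted from~\cite{aybat2016distributed}---so there is nothing to compare against; your treatment of $r^*>0$ via strict feasibility and of $\tilde r\le r^*$ via $\|\cdot\|_1\ge\|\cdot\|_2$ is exactly the intended content, and your caveat that the convexity of the $\tilde r$-problem rests on additional structure of $\cK^*$ (e.g., $\cK^*\subseteq\reals^m_+$ makes $\|w\|_1=1$ linear) is appropriate.
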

\begin{corollary}\label{cor:bound_known}
Consider the convex optimization
problem in~\eqref{eq:conic_problem}. \rev{Suppose Assumption \ref{assum:optimization} holds and} let 
$\{(x_k,y_k)\}_{k\geq 0}$ be the iterate
sequence when APD is applied to the following SP problem with $h(y)=\mathbb{I}_{\cK^*\cap \cB}(y)$, \rev{where $\cB$ is defined in~\eqref{eq:B},}
\begin{align}
\label{eq:constrained-problem-thm}
\min_{x\in\reals^n}\max_{y\in\reals^m} f(x)+g(x)+\fprod{G(x),y}-h(y).
\end{align}
Let $\tau_0=c_\tau\big(L_g+(B+\kappa)L_G+\tfrac{1}{\alpha}C_G^2\big)^{-1}$ and $\sigma_0=c_\sigma\alpha^{-1}$ for some $\alpha>0$ and $c_\tau,c_\sigma\in(0,1]$. Then, for all $K\geq 1$,
\begin{align}
\label{eq:optimization-bounds}
\max\Big\{|\rho(\bar{x}_K)-\rho(x^*)|,~\kappa~d_{-\cK}\big(G(\bar{x}_K)\big)\Big\}\leq \frac{1}{T_K}\Delta(x^*,y_K^\dag)=\cO(1/T_K),
\end{align}
where $y_K^\dag=(\norm{y^*}+\kappa)\cP_{\cK^*}\big(G(\bar{x}_K)\big) \norm{\cP_{\cK^*}\big(G(\bar{x}_K)\big)}^{-1}$, $\bar{x}_K$ and $T_K$ are defined in Theorem~\ref{thm:main}, and \rev{$\Delta(\cdot,\cdot)$} is defined in \eqref{eq:delta}. Note that $\sup_{K\geq 1}\|y_K^\dag\|=\|y^*\|+\kappa$.

\textbf{(Part I.)} Suppose the objective 
in \eqref{eq:conic_problem} is merely convex. Then \eqref{eq:optimization-bounds} holds with $T_K=K$ for all $K\geq 1$ when $\theta_k=1$, $\tau_k=\tau_0$, $\sigma_k=\sigma_0$ and $t_k=1$ for all $k\geq 0$.

\textbf{(Part II.)} Suppose the objective 
in \eqref{eq:conic_problem} is strongly convex with $\mu>0$. Then \eqref{eq:optimization-bounds} holds with $T_K=\Theta(K^2)$ for all $K\geq 1$ when $\{\tau_k,\sigma_k,\theta_k\}_{k\geq 0}$ sequence is chosen as in \eqref{eq:step-size-rule}, and $t_k=\sigma_k/\sigma_0$ for $k\geq 0$. Moreover, for all $K\geq 1$,
\begin{align*}
\bD_\cX(x^*,x_K)\leq \frac{\tau_K}{\sigma_K}\sigma_0\Delta(x^*,y^*)=\cO(1/K^2).
\end{align*}
\end{corollary}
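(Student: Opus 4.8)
The plan is to reduce the statement to the ergodic gap bound of Theorem~\ref{thm:main}, applied to the reformulation \eqref{eq:constrained-problem-thm} with $h=\ind{\cK^*\cap\cB}$, and then to split that gap into the separate suboptimality and infeasibility estimates via a Moreau--decomposition argument. First I would verify the hypotheses of Theorem~\ref{thm:main} for $\Phi(x,y)=g(x)+\fprod{G(x),y}$: here $\grad_y\Phi(x,y)=G(x)$ gives $L_{yy}=0$ and $L_{yx}=C_G$, while $\dom h\subseteq\cB=\{\norm{y}\le B+\kappa\}$ together with \eqref{eq:L_xx-co} yields the uniform bound $L_{xx}=L_g+(B+\kappa)L_G$. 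With these constants the prescribed $\tau_0,\sigma_0$ coincide with the choice in the remark after Theorem~\ref{thm:main}, so \eqref{eq:initial_step_condition} holds (with $c_\beta=0$ the second condition is vacuous since $L_{yy}=0$). Finally, because $\norm{y^*}\le B<B+\kappa$, the dual optimizer $y^*$ lies strictly inside $\cB$; hence it still maximizes $\cL(x^*,\cdot)$ over the restricted set $\cK^*\cap\cB$, so $(x^*,y^*)$ remains a saddle point of the restricted problem and Theorem~\ref{thm:main} applies, giving $\cL(\bar{x}_K,y)-\cL(x,\bar{y}_K)\le\tfrac{1}{T_K}\Delta(x,y)$ for every $(x,y)\in\cX\times(\cK^*\cap\cB)$.

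The geometric core is the Moreau decomposition $G(\bar{x}_K)=\cP_{-\cK}(G(\bar{x}_K))+\cP_{\cK^*}(G(\bar{x}_K))$ into orthogonal components, using $(-\cK)^\circ=\cK^*$. This gives $d_{-\cK}(G(\bar{x}_K))=\norm{\cP_{\cK^*}(G(\bar{x}_K))}$, and since $y_K^\dag$ is the positive multiple of $\cP_{\cK^*}(G(\bar{x}_K))$ of norm $\norm{y^*}+\kappa$, both $y_K^\dag\in\cK^*\cap\cB$ (an admissible test point, as $\norm{y^*}+\kappa\le B+\kappa$) and, killing the cross term by orthogonality, $\fprod{G(\bar{x}_K),y_K^\dag}=(\norm{y^*}+\kappa)\,d_{-\cK}(G(\bar{x}_K))$.

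Next I would set $x=x^*$ and $y=y_K^\dag$ in the gap bound. Writing $\cL(\bar{x}_K,y_K^\dag)=\rho(\bar{x}_K)+\fprod{G(\bar{x}_K),y_K^\dag}$ and bounding $\cL(x^*,\bar{y}_K)=\rho(x^*)+\fprod{G(x^*),\bar{y}_K}\le\rho(x^*)$ (feasibility $G(x^*)\in-\cK$ and $\bar{y}_K\in\cK^*$ make the pairing nonpositive), the gap bound becomes
\[
[\rho(\bar{x}_K)-\rho(x^*)]+(\norm{y^*}+\kappa)\,d_{-\cK}(G(\bar{x}_K))\le\tfrac{1}{T_K}\Delta(x^*,y_K^\dag).
\]
Discarding the nonnegative infeasibility term gives $\rho(\bar{x}_K)-\rho(x^*)\le\tfrac{1}{T_K}\Delta(x^*,y_K^\dag)$. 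For the reverse direction, the Lagrangian saddle inequality $\rho(x^*)\le\rho(\bar{x}_K)+\fprod{G(\bar{x}_K),y^*}$, combined with the same decomposition (the $-\cK$ part pairs nonpositively with $y^*\in\cK^*$, Cauchy--Schwarz on the $\cK^*$ part), yields $\rho(x^*)-\rho(\bar{x}_K)\le\norm{y^*}\,d_{-\cK}(G(\bar{x}_K))$; substituting this into the displayed inequality cancels the $\norm{y^*}\,d_{-\cK}$ contribution and leaves the infeasibility bound $\kappa\,d_{-\cK}(G(\bar{x}_K))\le\tfrac{1}{T_K}\Delta(x^*,y_K^\dag)$, which in turn controls $\rho(x^*)-\rho(\bar{x}_K)$ at rate $\cO(1/T_K)$. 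Collecting these three estimates yields \eqref{eq:optimization-bounds}. I expect this coupling to be the crux: choosing $\norm{y_K^\dag}=\norm{y^*}+\kappa$ is exactly what makes the Lagrangian lower bound $\norm{y^*}\,d_{-\cK}$ subtract off to leave the clean $\kappa\,d_{-\cK}$ factor, and one must keep track of the constant $\max\{1,\norm{y^*}/\kappa\}$ relating the lower deviation of $\rho$ to $\tfrac{1}{T_K}\Delta(x^*,y_K^\dag)$.

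Finally I would specialize $T_K$. Under mere convexity ($\mu=0$, Part~I), Part~I of Theorem~\ref{thm:main} supplies constant step-sizes, $t_k=1$, and $T_K=K$; under strong convexity ($\mu>0$, $L_{yy}=0$, Part~II), Part~II of Theorem~\ref{thm:main} gives the rule \eqref{eq:stepsize-rule} with $t_k=\sigma_k/\sigma_0$ and $T_K=\Theta(K^2)$, so \eqref{eq:optimization-bounds} holds at the stated rates. The extra estimate $\bD_\cX(x^*,x_K)\le\tfrac{\tau_K}{\sigma_K}\sigma_0\Delta(x^*,y^*)$ follows directly from the iterate bound \eqref{eq:xy-bound-II} by dividing through by $\gamma_K=\sigma_K/\tau_K$ and dropping the nonnegative $\bD_\cY$ term, and Lemma~\ref{lem:k2-rate} ($\tau_K/\sigma_K=\cO(1/K^2)$) converts it into the claimed $\cO(1/K^2)$ bound.
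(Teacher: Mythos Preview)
Your proposal is correct and follows essentially the same route as the paper: apply the gap bound of Theorem~\ref{thm:main} at $(x,y)=(x^*,y_K^\dag)$, use the Moreau decomposition $G(\bar{x}_K)=\cP_{-\cK}(G(\bar{x}_K))+\cP_{\cK^*}(G(\bar{x}_K))$ to evaluate $\fprod{G(\bar{x}_K),y_K^\dag}$, and combine the resulting upper bound with the Lagrangian lower bound $\rho(x^*)-\rho(\bar{x}_K)\le\norm{y^*}\,d_{-\cK}(G(\bar{x}_K))$ to split off the suboptimality and infeasibility estimates. Your setup is in fact more explicit than the paper's proof (which omits the verification of the Lipschitz constants and the step-size condition), and your remark about the factor $\max\{1,\norm{y^*}/\kappa\}$ for the lower deviation of $\rho$ is a careful observation that the paper's proof glosses over under the $\cO(1/T_K)$ notation.
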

\begin{proof}
It is easy to verify that $\fprod{G(\bar{x}_K),\sa{y_K^\dag}}= (\norm{y^*}+\kappa) d_{-\cK}(G(\bar{x}_K))$ as for any $w\in\reals^m$ we have $w=\cP_{-\cK}(w)+\cP_{\cK^*}(w)$ and $\fprod{\cP_{-\cK}(w),~\cP_{\cK^*}(w)}=0$. Hence, $\cL(\bar{x}_K,\sa{y_K^\dag})=\rho(\bar{x}_K)+(\norm{y^*}+\kappa) d_{-\cK}(G(\bar{x}_K))$ since $y_K^\dag\in\cK^*$. Note that $\rho(x^*)=\cL(x^*,y^*)\geq \cL(x^*,\bar{y}_K)$. Therefore, \eqref{eq:delta} implies that 
{\small
\begin{align}\label{eq:L_upper_bound}
 \rho(\bar{x}_K)-\rho(x^*)+(\norm{y^*}+\kappa) d_{-\cK}(G(\bar{x}_K))\leq \cL(\bar{x}_K,y_K^\dag)-\cL(x^*,\bar{y}_K)\leq \sa{\frac{1}{T_K}}\Delta(x^*,y_K^\dag).
\end{align}}%
On the other hand, we also have
\begin{align}\label{eq:L_lower_bound}
0\leq \cL(\bar{x}_K,y_K^\dag)-\cL(x^*,y^*) &= \rho(\bar{x}_K)-\rho(x^*)+\fprod{G(\bar{x}_K),y^*} \nonumber \\
&\leq  \rho(\bar{x}_K)-\rho(x^*)+\norm{y^*} d_{-\cK}(G(\bar{x}_K)),
\end{align}
where we used the fact that for any $y\in\reals^m$, $\fprod{y^*,y}\leq \fprod{y^*,\cP_{\cK^*}(y)}\leq \norm{y^*} d_{-\cK}(y)$. Combining \eqref{eq:L_upper_bound} and \eqref{eq:L_lower_bound} gives the desired result.
\end{proof}
\begin{remark}
\label{rem:delta}
\sa{
Since $\|y_K^\dag\|=\norm{y^*}+\kappa$ for all $K\geq 1$, one has $\sup_{K\geq 1}\Delta(x^*,y^\dag_K)\leq \frac{1}{2\tau_0}\norm{x^*-x_0}^2+\frac{1}{\sigma_0}((\norm{y^*}+\kappa)^2+\norm{y_0}^2)$. In practice, $\kappa=B$ can be used.}
\end{remark}
\subsection{\sa{A dual bound is not available}}
\label{sec:without_dualbound}
Here we consider the situation where the dual bound for \eqref{eq:conic_problem} is not known and hard to compute. We consider two subcases.\\
\indent {\bf CASE 1: {$L_{xx}$ exists.}} 
Suppose $L_{xx}$ \emph{exists}, but \emph{not} known, then one can {\emph{immediately}} implement APDB which locally estimates the Lipschitz constants.
\begin{corollary}\label{cor:bound_unknown}
\sa{Consider 
\eqref{eq:conic_problem} \rev{under Assumption \ref{assum:optimization}.} Let $\{(x_k,y_k)\}_{k\geq 0}$ be the APDB iterate sequence when APDB is applied to \eqref{eq:constrained-problem-thm} with $h(y)=\mathbb{I}_{\cK^*}(y)$. The bounds in Part I and Part II of Corollary~\ref{cor:bound_known} continue to hold {for any $\kappa>0$} when the step-sizes are adaptively updated as described in Algorithm \ref{alg:APDB}.}
\end{corollary}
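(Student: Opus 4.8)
The plan is to reduce the statement to the ergodic rate bound \eqref{eq:delta} for APDB, established in Theorem~\ref{thm:backtrack}, and then to replay the argument of Corollary~\ref{cor:bound_known} almost verbatim. First I would check that the reformulation \eqref{eq:constrained-problem-thm} with $h(y)=\mathbb{I}_{\cK^*}(y)$ satisfies Assumption~\ref{assum}: $f$ is closed convex by hypothesis and $h$ is closed convex since $\cK^*$ is a closed convex cone; for $\Phi(x,y)=g(x)+\fprod{G(x),y}$ we have $\grad_y\Phi(x,\cdot)=G(x)$ independent of $y$, so $L_{yy}=0$ and $L_{yx}=C_G$ by the Lipschitz continuity of $G$; and the CASE~1 hypothesis guarantees that the constant $L_{xx}$ in \eqref{eq:Lxx} exists over $\dom h=\cK^*$ (by \eqref{eq:L_xx-co} this requires $L_G=0$, i.e. $G$ affine, since $\cK^*$ is unbounded), although $L_{xx}$ is not known. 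Hence Theorem~\ref{thm:backtrack} applies, APDB is well-defined, and its backtracking approximates the local Lipschitz constants without needing $L_{xx}$ or a dual radius; this yields \eqref{eq:delta} for every $(x,y)\in\cX\times\cY$ with $T_K=\Omega(K)$ when $\mu=0$ (Part~I) and $T_K=\Omega(K^2)$ when $\mu>0$ (Part~II).

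The key observation is that the derivation proving Corollary~\ref{cor:bound_known} invokes only \eqref{eq:delta} and the saddle structure of \eqref{eq:constrained-problem-thm}, and never uses the boundedness of $\dom h$ or the explicit step-size formula. I would therefore keep the same theoretical test point $y_K^\dag=(\norm{y^*}+\kappa)\cP_{\cK^*}(G(\bar{x}_K))\norm{\cP_{\cK^*}(G(\bar{x}_K))}^{-1}$; it lies in $\cK^*=\dom h$, so $h(y_K^\dag)=0$ and the identity $\cL(\bar{x}_K,y_K^\dag)=\rho(\bar{x}_K)+(\norm{y^*}+\kappa)\,d_{-\cK}(G(\bar{x}_K))$ is unchanged. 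Because Theorem~\ref{thm:backtrack} gives \eqref{eq:delta} for all $(x,y)$ with no boundedness requirement, it applies at $x=x^*$ and $y=y_K^\dag$; combined with $\cL(x^*,\bar{y}_K)\leq\rho(x^*)$ (which uses only $\bar{y}_K\in\cK^*$, feasibility $G(x^*)\in-\cK$, and optimality of $(x^*,y^*)$) this reproduces \eqref{eq:L_upper_bound}, while \eqref{eq:L_lower_bound} carries over verbatim. Combining the two inequalities then gives \eqref{eq:optimization-bounds}.

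For the rate order I would note that $\norm{y_K^\dag}=\norm{y^*}+\kappa$ is constant in $K$, so $\sup_{K\geq 1}\Delta(x^*,y_K^\dag)<\infty$ exactly as in Remark~\ref{rem:delta}; hence the right-hand side of \eqref{eq:optimization-bounds} is $\cO(1/T_K)$, which gives $\cO(1/K)$ in Part~I and $\cO(1/K^2)$ in Part~II. The extra Part~II estimate $\bD_\cX(x^*,x_K)\leq\frac{\tau_K}{\sigma_K}\sigma_0\Delta(x^*,y^*)=\cO(1/K^2)$ then follows directly from \eqref{eq:xy-bound-II}, which Theorem~\ref{thm:backtrack} provides for APDB with $\gamma_K=\Omega(K^2)$.

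The one genuinely new point, and the part I would emphasize, is that dropping the artificial ball $\cB$ causes no loss: since APDB sets its step-sizes by backtracking against the local behavior of $\Phi$, it never requires the a-priori dual radius $B+\kappa$ that entered $L_{xx}$ in Corollary~\ref{cor:bound_known}. The parameter $\kappa$ now survives only inside the analysis through $y_K^\dag$, and any $\kappa>0$ produces a valid $y_K^\dag\in\cK^*$ of controlled norm, so the bound holds for every $\kappa>0$. The subtlety worth stressing is that although $\dom h=\cK^*$ is unbounded---and the dual optimal set may be unbounded---the analysis probes \eqref{eq:delta} only at the single controlled point $y_K^\dag$, keeping $\Delta(x^*,y_K^\dag)$ finite and the ergodic rates intact.
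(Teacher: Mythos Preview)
Your proposal is correct and follows exactly the approach the paper intends: the paper offers no explicit proof for this corollary, relying instead on the preceding sentence that APDB ``locally estimates the Lipschitz constants,'' so that Theorem~\ref{thm:backtrack} delivers \eqref{eq:delta} and the derivation of Corollary~\ref{cor:bound_known} applies verbatim with $y_K^\dag\in\cK^*$. Your parenthetical that $L_{xx}$ existing on the unbounded cone $\cK^*$ forces $L_G=0$ via \eqref{eq:L_xx-co} is a correct side observation the paper leaves implicit, but it is commentary on the scope of CASE~1 rather than a step in the proof.
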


\indent {\bf CASE 2: {$L_{xx}$ does not {exist}}.} {Suppose $L_{xx}$ does not exist -- possibly when $\dom h$ is unbounded, e.g., see the example in Remark~\ref{rem:differentiability}.} In this case, one can still implement the algorithm APDB with a slight modification {and still guarantee convergence under Assumption~\ref{assum:optimization}}. In fact, \rev{since the global constant $L_{xx}$ does not exist for all $y\in\dom h$, the challenge in this scenario is to guarantee that there exists a \emph{bounded} set $\bar{\cB}\subset\reals^m$ such that APDB dual iterate sequence lies in it, i.e., $\{y_k\}\subset\bar{\cB}$, which would imply the existence of a constant $L_{xx}$ such that \eqref{eq:Lxx} holds for all $y\in\bar{\cB}$.
More precisely, we use induction to show the boundedness of $\{y_k\}$, and this result implies} that $\Phi$ satisfies \eqref{eq:L_xx-co} for $y=y_k$ with $L_{xx}=L_g+L_G\sup_k\norm{y_k}_{\cY}$ for all $k\geq 0$ -- this is what we need for the proof of Theorem~\ref{thm:backtrack} to hold if we relax \textbf{(i)} in Assumption~\ref{assum}.
\rev{However, naively using induction 
to construct a uniform bound on $\{y_k\}_k$ fails 
as} one needs the Lipschitz constant of $\grad_x\Phi(\cdot,y_{k+1})$ to bound $\norm{y_{k+1}}$ which depends on $y_{k+1}$ at iteration $k$. A remedy to this circular {argument} is to perform the $x$-update first, followed by the $y$-update; this way, at iteration $k\geq 0$, one needs to bound the Lipschitz constant of $\grad_x\Phi(\cdot,y_{k})$, instead of $\grad_x\Phi(\cdot,y_{k+1})$, which is now possible as a bound on $\norm{y_k}$ is available through the induction hypothesis.

\sa{To solve \eqref{eq:conic_problem}, we will implement a modified version of APDB on \eqref{eq:constrained-problem-thm}, which is a special case of \eqref{eq:original-problem} with $\Phi(x,y)=g(x)+\fprod{G(x),y}$. In the rest, we consider running a variant of Algorithm~\ref{alg:APDB} with $(x_{k+1},y_{k+1})\gets\hbox{\textbf{MainStep}}(x_k,y_k,x_{k-1},y_{k-1},\tau_k,\sigma_k,\theta_k)$ step modified as follows:}
\sa{
\begin{subequations}\label{eq:switchAPD}
\begin{align}
&{s_k} \gets (1+\theta_k)\grad_x\Phi(x_k,y_k)-\theta_k\grad_x\Phi(x_{k-1},y_{k-1}), \\
&x_{k+1}\gets \argmin_{x\in\cX} f({x})+\fprod{{s_k}, ~x}+{\frac{1}{\tau_k}}\bD_{\cX}(x,x_k),\\
& y_{k+1}\gets \argmin_{y\in\cY} h(y)-\fprod{\grad_y\Phi(x_{k+1},y_k),~ y}+{\frac{1}{\sigma_k}}\bD_{\cY}(y,y_k).
\end{align}
\end{subequations}}%
\sa{This switch of $x$- and $y$-updates within Algorithm~\ref{alg:APDB} also requires modifying the test function $E_k$ in \eqref{eq:Ek}. Now we redefine $E_k$ used in Line~\ref{algeq:test_function} as follows:}
\sa{\small
\begin{align}
\label{eq:modified_Ek}
E_k(x,y)\triangleq &\frac{1}{2\alpha_{k+1}}\norm{\grad_x\Phi(x,y)-\grad_x\Phi(x,y_k)}^2-\frac{1}{\sigma_k}\bD_\cY(y,y_k)\nonumber\\
&+\frac{1}{2\beta_{k+1}}\norm{\grad_x\Phi(x,y_k)-\grad_x\Phi(x_k,y_k)}^2-\Big(\frac{1}{\tau_k}-\theta_k(\alpha_k+\beta_k)\Big)\bD_\cX(x,x_k).
\end{align}}%
\sa{Next, we state the convergence result of the proposed method.} 
\begin{corollary}\label{cor:bound_not_exist}
\sa{Consider a variant of APDB where Line~\ref{algeq:APDB-mainstep} of Algorithm~\ref{alg:APDB} is replaced by the update-rule in \eqref{eq:switchAPD} and the test function in Line~\ref{algeq:test_function} is set as in~\eqref{eq:modified_Ek} with $\{\alpha_k,\beta_k\}$ chosen as $\alpha_{k+1}=c_\alpha/\tau_k$ and $\beta_{k+1}=\gamma_0 c_\beta/\sigma_k$ for $k\geq 0$.}

\rev{Suppose Assumption~\ref{assum:optimization} holds and }\sa{let $(x^*,y^*)$ be a primal-dual optimal solution to \eqref{eq:conic_problem}. Consider $\{(x_k,y_k)\}_{k\geq 0}$ generated by the modified APDB when applied to \eqref{eq:constrained-problem-thm} with $h(y)=\mathbb{I}_{\cK^*}(y)$. Assuming either $\cK=\reals^m_+$ or $\grad G$ is bounded on $\dom f$, one has 
\begin{align}
\label{eq:y_bound}
\norm{y_k}\leq 
 \bar{B}\triangleq \norm{y^*}+\sqrt{\gamma_0\norm{x^*-x_0}^2+\norm{y^*-y_0}^2},\quad \forall k\geq 0.
\end{align}
Moreover, for any $\kappa>0$,
\eqref{eq:optimization-bounds} continue to hold for $\{t_k\}$ such that $t_k=\sigma_k/\sigma_0$ for $k\geq 0$. Finally, for $\mu=0$, $T_K=\Omega(K)$ and for $\mu>0$, $T_K=\Omega(K^2)$.}
\end{corollary}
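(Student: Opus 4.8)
The plan is to treat the modified recursion \eqref{eq:switchAPD} as a \emph{switched} instance of GAPD in which the roles of the $x$- and $y$-blocks are interchanged: the momentum/extrapolation is now carried by $\grad_x\Phi$, the $x$-update plays the role that the $y$-update played in Algorithm~\ref{alg:mainstep}, and $\tau_k,\sigma_k$, $f,h$ are relabeled accordingly. Under this relabeling the derivation of Theorem~\ref{thm:general-bound} applies \emph{verbatim} with $E_k$ replaced by the test function \eqref{eq:modified_Ek}, \emph{provided} the backtracking condition $E_k(x_{k+1},y_{k+1})\le-\delta[\tfrac{1}{\tau_k}\bD_\cX(x_{k+1},x_k)+\tfrac{1}{\sigma_k}\bD_\cY(y_{k+1},y_k)]$ is passed at every outer iteration. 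Since $\Phi(x,\cdot)$ is affine here ($\grad_y\Phi(x,y)=G(x)$, so $L_{yy}=0$), passing the test only requires controlling the two gradient-difference terms in \eqref{eq:modified_Ek}: the cross term $\norm{\grad_x\Phi(x,y)-\grad_x\Phi(x,y_k)}=\norm{\grad G(x)^\top(y-y_k)}$ is bounded by $C_G\norm{y-y_k}$ whenever $\grad G$ is bounded on $\dom f$ (the $\cK=\reals^m_+$ case instead exploits the componentwise structure of the projection onto $\cK^*$), while the self term $\norm{\grad_x\Phi(x,y_k)-\grad_x\Phi(x_k,y_k)}$ is governed by the \emph{local} constant $L_g+L_G\norm{y_k}$ appearing in \eqref{eq:L_xx-co}. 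The whole difficulty is thereby reduced to producing an a~priori uniform bound on $\norm{y_k}$, i.e., to proving \eqref{eq:y_bound}.

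The main obstacle is to break the circular dependence behind this bound, and this is precisely why the updates are switched: certifying the local Lipschitz constant at iteration $k$ needs a bound on the dual iterate that multiplies $\grad G$, yet that bound is itself a consequence of the descent that relies on the certificate. Performing the $x$-update \emph{before} the $y$-update makes the step-$k$ certificate depend on $\norm{y_k}$ rather than $\norm{y_{k+1}}$, which unlocks an induction on $k$. The base case is immediate from the triangle inequality, since $\bar B\ge\norm{y^*}+\norm{y^*-y_0}\ge\norm{y_0}$ and $y_{-1}=y_0$. For the inductive step, assume $\norm{y_j}\le\bar B$ for all $j\le k$; then along iterates $0,\dots,k$ the Lipschitz constant of $\grad_x\Phi(\cdot,y_j)$ is at most $L_g+L_G\bar B$, so by the argument of Lemma~\ref{lem:parameter} (applied to the switched test function) each backtracking loop through iteration $k$ terminates in finitely many inner steps and yields $E_j\le0$, with a uniform lower bound $\tau_j\ge\eta\hat\tau_j$ that depends only on $L_g+L_G\bar B$, $C_G$, $\gamma_0$, and the constants $c_\alpha,c_\beta,\delta$.

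With $E_j\le0$ secured for $j\le k$, I would then run the switched analogue of the supermartingale argument in the proof of Theorem~\ref{thm:convergence}: evaluating the basic inequality \eqref{eq:bound-lagrange-main} at the saddle point $(x^*,y^*)$ and using $\cL(x_{j+1},y^*)-\cL(x^*,y_{j+1})\ge0$ shows that the switched analogue of $t_jQ_j(x^*,y^*)$ is nonincreasing up to step $k$ and telescopes from its initial value $t_0Q_0(x^*,y^*)=\Delta(x^*,y^*)$, the momentum cross terms vanishing at $k=0$ because $x_0=x_{-1}$, $y_0=y_{-1}$. Since $y$ is now the clean prox block, with $t_j=\sigma_j/\sigma_0$ its surviving contribution is $\tfrac{1}{\sigma_0}\bD_\cY(y^*,y_{k+1})\le\Delta(x^*,y^*)$, whence $\norm{y_{k+1}-y^*}^2\le2\bD_\cY(y^*,y_{k+1})\le2\sigma_0\Delta(x^*,y^*)=\gamma_0\norm{x^*-x_0}^2+\norm{y^*-y_0}^2$, using $\gamma_0=\sigma_0/\tau_0$ and the Euclidean $\bD$'s. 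The triangle inequality then gives $\norm{y_{k+1}}\le\bar B$, closing the induction and establishing \eqref{eq:y_bound}.

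Once \eqref{eq:y_bound} holds, $L_{xx}=L_g+L_G\bar B$ is a valid constant along the \emph{entire} trajectory, so the switched GAPD satisfies Assumption~\ref{assum:step} for all $k\ge0$ and Theorem~\ref{thm:general-bound} (in switched form) delivers the ergodic bound \eqref{eq:delta} for every $(x,y)\in\cX\times\cY$ with $t_k=\sigma_k/\sigma_0$. The remaining claims are then routine. The order of $T_K$ follows as in Theorem~\ref{thm:backtrack}: for $\mu=0$ the step-sizes are bounded away from $0$ by Lemma~\ref{lem:parameter}, giving $T_K=\sum_k\sigma_k/\sigma_0=\Omega(K)$, while for $\mu>0$ Lemma~\ref{lem:k2-rate} yields $\sigma_k=\Omega(k)$ and hence $T_K=\Omega(K^2)$. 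Finally, the feasibility/optimality bound \eqref{eq:optimization-bounds} is obtained exactly as in the proof of Corollary~\ref{cor:bound_known}: that derivation invokes only \eqref{eq:delta} evaluated at $(x^*,y_K^\dag)$ together with the identities $\cL(\bar x_K,y_K^\dag)=\rho(\bar x_K)+(\norm{y^*}+\kappa)\,d_{-\cK}(G(\bar x_K))$ and $\rho(x^*)=\cL(x^*,y^*)\ge\cL(x^*,\bar y_K)$, none of which uses boundedness of $\dom h$; it therefore carries over unchanged to $h=\mathbb{I}_{\cK^*}$ for any $\kappa>0$.
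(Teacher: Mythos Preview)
Your proposal is correct and follows essentially the same approach as the paper: both use induction on the dual bound $\norm{y_k}\le\bar B$, exploit the $x$--$y$ switch to make the step-$k$ Lipschitz certificate depend on $\norm{y_k}$ rather than $\norm{y_{k+1}}$, verify finite termination of backtracking via the local constant $L_g+L_G\bar B$, and then telescope the switched one-step inequality at $(x^*,y^*)$ to extract $\tfrac{1}{\sigma_0}\bD_\cY(y^*,y_{k+1})\le\Delta(x^*,y^*)$, after which the $T_K$ rates and \eqref{eq:optimization-bounds} follow as before. Two minor remarks: the paper handles the $\cK=\reals_+^m$ case simply by observing that Lipschitz continuity of $G$ together with componentwise convexity forces $\grad G$ to be bounded (no projection argument is needed), and the paper explicitly verifies that the $x$-coefficient $\tfrac{1}{\tau_K}-\theta_K(\alpha_K+\beta_K)\ge\tfrac{\gamma_K}{\sigma_K}(1-c_\alpha-c_\beta)\ge 0$ before dropping that term, a check you leave implicit.
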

\begin{proof}
{Suppose not only a dual bound for \eqref{eq:conic_problem} is \emph{not} known and hard to compute; but also $L_{xx}$ for \eqref{eq:constrained-problem-thm} does not exist -- possibly when $\dom h$ is unbounded.} Here the main idea is to show that when APDB with update-rules of \eqref{eq:switchAPD}, applied to \eqref{eq:constrained-problem-thm} with $h(y)=\mathbb{I}_{\cK^*}(y)$, APDB generates $\{y_k\}$ that is bounded. Then the convergence results can be proved similar to the previous results using the dual iterate bound.

{To this end, we use induction, i.e., we assume that for some $K\geq 1$, 
$\norm{y_k}\leq \bar{B}$ 
for $k= 0,\hdots,K-1$, and we prove that $\norm{y_K}\leq \bar{B}$.} Note that the basis of induction clearly holds for $K=1$ as we have $\bar{B}\rev{\geq \norm{y^*}+\norm{y^*-y_0}\geq} \norm{y_0}$.

Given $\gamma_0,\bar{\tau}>0$, let $\sigma_{-1}=\gamma_0\bar{\tau}$ 
and consider $\{t_k,\alpha_k,\beta_k\}$ chosen as
\begin{equation}\label{eq:parameter-t-alpha-beta}
t_k=\sigma_k/\sigma_0,\quad \alpha_{k+1}=c_\alpha/\tau_k,\quad \beta_{k+1}=\gamma_0c_\beta/\sigma_k.
\end{equation}
We first verify that $\{x_k,y_k\}_{k=0}^{K-1}$ and $\{\tau_k,\sigma_k,\theta_k\}_{k=0}^{K-1}$ together with $\{\alpha_k,\beta_k,t_k\}$ as stated in~\eqref{eq:parameter-t-alpha-beta} satisfy \eqref{eq:step-size-condition} in Assumption~\ref{assum:step} for $k=0,\hdots,K-1$. To show that \eqref{eq:step-size-condition-Ek} and \eqref{eq:step-size-condition-theta} hold for $k=0,\hdots,K-1$, we revisit the proofs of Lemmas~\ref{lem:step-size-seq} and \ref{lem:parameter}. 
Note that from the  step-size update rules we have that $\sigma_k=\gamma_k\tau_k$, {$\theta_k=\sigma_{k-1}/\sigma_k$} and $\gamma_{k+1}=\gamma_k(1+\mu\tau_k)$, for $k=0,\hdots,K-1$. Therefore, Lemma \ref{lem:step-size-seq} implies that \eqref{eq:step-size-condition-theta} holds for $k=0,\hdots,K-1$.

{Note that for any $x\in\dom f$ and $y,y'\in\cY$, we have
\begin{align}
\label{eq:Lxy}
\norm{\grad_x\Phi(x,y)-\grad_x\Phi(x,y')}=\norm{\grad G(x)^\top(y-y')}\leq\norm{\grad G(x)}\norm{y-y'}.
\end{align}
Moreover, using Lipschitz continuity of $G$ \rev{--see Assumption \ref{assum:optimization}}, one can easily show that if $\cK=\reals^m_+$, then $\grad G$ is bounded.\footnote{\rev{This is an immediate extension of the real-valued case, i.e., if a differentiable, convex function $g:\reals^n\to\reals$ is $L$-Lipschitz with respect to $\norm{\cdot}$, then $\norm{\grad g(\cdot)}_{*}\leq L$.}} Thus, it  follows from \eqref{eq:Lxy} that whenever $\grad G$ is bounded on $\dom f$, the Lipschitz constant $L_{xy}$ exists. Next, define $\bar{L}_{xx}\triangleq {L_g+L_G \bar{B}}$ and note that 
$\norm{\grad_x\Phi(x_{k+1},y_k)-\grad_x\Phi(x_k,y_k)}\leq \bar{L}_{xx}\norm{x_{k+1}-x_k}$, for $k=0,\hdots,K-1$,} where we used \eqref{eq:L_xx-co} and the induction hypothesis, i.e., $\norm{y_k}\leq \bar{B}$ for $k= 0,\hdots,K-1$.
Therefore, for $E_k(\cdot,\cdot)$ defined in~\eqref{eq:modified_Ek}, the following upper bound on $E_k\triangleq E_k(x_{k+1},y_{k+1})$ holds for $k=0,\hdots,K-1$:
\begin{align*}
E_k\leq & \Big(\frac{L_{xy}^2}{\alpha_{k+1}}-\frac{1}{\sigma_k}\Big)\bD_\cY(y_{k+1},y_k) +\Big(\frac{{\bar{L}_{xx}^2}}{\beta_{k+1}}+\theta_k(\alpha_k+\beta_k)-\frac{1}{\tau_k}\Big)\bD_\cX(x_{k+1},x_k)\\
\leq &\Big(\frac{1}{c_\alpha}{L_{xy}^2}\tau_k-\frac{1}{\gamma_k\tau_k}\Big)\bD_\cY(y_{k+1},y_k) +\Big({\bar{L}_{xx}^2}\tau_k\frac{\gamma_k}{c_\beta\gamma_0}-\frac{1-(c_\alpha+c_\beta)}{\tau_k}\Big)\bD_\cX(x_{k+1},x_k),
\end{align*}
where in the last inequality, we used $\sigma_k=\gamma_k\tau_k$ and {the fact that $\{\gamma_k\}_{k\geq 0}$ is a non-decreasing sequence such that $\gamma_k\geq \gamma_0$, for $k\geq 0$. Therefore, given $\delta\in [0,1)$, one can conclude that for all $k=0,\hdots,K-1$, $E_k\leq -\delta[\bD_\cX(x_{k+1},x_k)/\tau_k+\bD_\cY(y_{k+1},y_k)/\sigma_k]$ holds for any $\tau_k\in(0,\Psi_3\sqrt{\gamma_0/\gamma_k}]$
where $\Psi_3\triangleq \min\{\frac{\sqrt{c_\alpha(1-\delta)}}{L_{xy}\sqrt{\gamma_0}},\frac{\sqrt{c_\beta(1-(c_\alpha+c_\beta+\delta))}}{L_{xx}}\}$;} hence, after a finite number steps the backtracking terminates and we have $\tau_k\geq \eta\Psi_3\sqrt{\gamma_0/\gamma_k}$. At this point, we verified that \eqref{eq:step-size-condition} holds for $k=0,\hdots,K-1$.

Following the same proof lines as in Theorem \ref{thm:general-bound} with $x$ and $y$ being switched one can easily derive the following result:
{\small
\begin{align}\label{eq:bound-lagrange-switch}
{\cL(x_{k+1},y)-\cL(x,y_{k+1})\leq} Q_k(z) - R_{k+1}(z) + E_k,
\end{align}
which holds for $k=0,\ldots,K-1$, where $Q_k(z)$, $R_k(z)$ and $E_k$ are defined similarly:
\begin{align*}
& Q_k(z) \triangleq \frac{1}{\tau_k}\bD_{\cX}(x,x_k)+\frac{1}{\sigma_k}\bD_{\cY}(y,y_k)+\frac{\theta_k}{2\alpha_k}\norm{\grad_x\Phi(x_k,y_{k})-\grad_x\Phi(x_{k},y_{k-1})}^2 \nonumber\\
&+\theta_k\fprod{{q_k},x-x_k}+\frac{\theta_k}{2\beta_k}\norm{\grad_x\Phi(x_k,y_{k-1})-\grad_x\Phi(x_{k-1},y_{k-1})}^2, \nonumber\\
& R_{k+1}(z)\triangleq \frac{1}{\tau_{k}}\bD_{\cX}(x,x_{k+1})+\frac{\mu}{2}\norm{x-x_{k+1}}^2_{\cX}+\frac{1}{2\beta_{k+1}}\norm{{\grad_x\Phi(x_{k+1},y_{k})}-\grad_x\Phi(x_k,y_k)}^2\nonumber\\
&+\frac{1}{\sigma_{k}}\bD_{\cY}(y,y_{k+1}) +\fprod{{q_{k+1}},x-x_{k+1}}+\frac{1}{2\alpha_{k+1}}\norm{\grad_x\Phi(x_{k+1},y_{k+1})-{\grad_x\Phi(x_{k+1},y_{k})}}^2,\nonumber \\
& E_k = E_k(x_{k+1},y_{k+1}), \nonumber 
\end{align*}}%
and {$q_k\triangleq\grad_x\Phi(x_k,y_k)-\grad_x\Phi(x_{k-1},y_{k-1})$ for $k=0,\hdots,K-1$}. It follows from \eqref{eq:step-size-condition-theta} that multiplying \eqref{eq:bound-lagrange-switch} with $t_k$ and summing it over $k=0,\hdots,K-1$, we get
\rev{
\begin{eqnarray}\label{eq:final-bound-backtracking-switch}
\lefteqn{T_K(\cL(\bar{x}_{K},y)-\cL(x,\bar{y}_{K}))\leq t_0Q_0(z)-t_{K-1}R_K(z)\leq}  \\
& & 
{\Delta(x,y)}-t_{K-1}\Big[\Big(\frac{1}{\tau_{K-1}}-(\alpha_K+\beta_K)\Big)\bD_{\cX}(x,x_{K}) +\frac{1}{\sigma_{K-1}}\bD_{\cY}(y,y_{K}) \Big], \nonumber
\end{eqnarray}}%
\rev{where $\Delta(x,y)$ is defined in \eqref{eq:delta}.} Using \eqref{eq:parameter-t-alpha-beta} at $k=K-1$ we get
\rev{
\begin{align}
\frac{1}{\tau_{K-1}}-(\alpha_K+\beta_K)=\frac{\gamma_{K-1}}{\sigma_{K-1}}-\Big(\frac{\gamma_{K-1}c_\alpha+\gamma_0c_\beta}{\sigma_{K-1}}\Big)\geq\frac{\gamma_{K-1}}{\sigma_{K-1}}(1-c_\alpha-c_\beta) \geq 0,
\end{align}}%
where \rev{the equality follows from $\sigma_{K-1}=\gamma_{K-1}\tau_{K-1}$, and} in the first inequality we used the fact that 
{$\gamma_{k+1}\geq\gamma_k>0$ for $k\geq 0$} and $c_\alpha+c_\beta+\delta\in (0,1]$. Therefore, \eqref{eq:final-bound-backtracking-switch} implies that
{\small
\begin{equation}\label{eq:final-bound-rate}
\frac{\rev{\gamma_{K-1}}(1-(c_\alpha+c_\beta))}{\sigma_0}\bD_{\cX}(x,x_{K})+\frac{1}{\sigma_0}\bD_\cY(y,y_K)+T_K(\cL(\bar{x}_{K},y)-\cL(x,\bar{y}_{K}))\leq \Delta(x,y).
\end{equation}}%
Evaluating \eqref{eq:final-bound-rate} at $(x,y)=(x^*,y^*)$ and using 
 $\cL(\bar{x}_{K},y^*)-\cL(x^*,\bar{y}_{K})\geq 0$, we obtain that $\frac{1}{\sigma_0}\bD_\cY(y^*,y_K)\leq \Delta(x^*,y^*)$. Moreover, using $\bD_\cY(y^*,y_K)\geq \frac{1}{2}\norm{y^*-y_K}^2$ one can easily verify that $\norm{y_K}\leq \bar{B}$;
hence, the induction is complete.

Consider Part I, since $\mu=0$ it is clear that $\gamma_k=\gamma_0$, for $k\geq 0$; hence, we get $T_K= \sum_{k=0}^{K-1}\sigma_k/\sigma_0\geq \eta \frac{\Psi_3}{{\tau_0}}K$. Consider Part II, i.e., $\mu>0$ and $L_{yy}=0$, following the same proof lines of Lemma \ref{lem:k2-rate}, $\gamma_{k+1}=\gamma_k(1+\mu\tau_k)$ implies that $\gamma_{k+1}\geq \gamma_k+\mu\eta\Psi_3\sqrt{\gamma_0\gamma_k}$ which implies that $\gamma_k\geq (\frac{\Gamma}{3})^2k^2$ and $\sigma_k\geq \frac{\Gamma^2k}{3\mu}$ for $k\geq 0$, where $\Gamma=\mu\eta\Psi_3\sqrt{\gamma_0}$. Moreover, $\tau_k\sigma_k\geq \Gamma^2/\mu$, for $k\geq 1$. Therefore, $T_K=\sum_{k=0}^{K-1}\sigma_k/\sigma_0=\Omega({K^2})$, $\tau_K/\sigma_K=1/\gamma_K=\cO(1/K^2)$, and $\sigma_K=\Omega(1/\tau_K)$ for $K\geq 1$.

Similar to the proof in section \ref{sec:rate_APDB}, one can observe that for both Part I and II, $\tau_k\geq \eta\Psi_3\sqrt{\gamma_0/\gamma_k}$ and $\tau_k\leq \bar{\tau}\sqrt{\gamma_0/\gamma_k}$ for all $k\geq 0$, which implies a uniform bound $1+\log_{1/\eta}(\frac{\bar{\tau}}{\Psi_3})$ on the number of inner iterations. Moreover, the rate results follow from \eqref{eq:final-bound-rate} similar to the proof of Theorem~\ref{thm:main}.
\end{proof}
\section{Numerical Experiments}\label{sec:numeric}
In this section, we implement both APD and APDB for solving \rev{quadratically constrained quadratic problems with synthetic data, and two problems arising in machine learning, namely, the kernel matrix learning and regression with fairness constraints.} We compare them with other state-of-the-art methods. \sa{All experiments are performed on a machine running 64-bit Windows 10 with Intel i7-8650U @2.11GHz and 16GB RAM.}
\begin{remark}
Using convexity of $\Phi(\cdot,y)$, one can define another test function $\widetilde{E}_k(x,y)$ that upper bounds $E_k(x,y)$. Indeed, we define $\widetilde{E}_k(x,y)$ by replacing the term \sa{$\Phi(x,y)-\Phi(x_k,y)-\fprod{\grad_x\Phi(x_k,y),x-x_k}$ in the first line of \eqref{eq:Ek} with the inner product $\fprod{\grad_x\Phi(x,y)-\grad_x\Phi(x_k,y),x-x_k}$.} 
\sa{For backtracking, using $\widetilde{E}_k(x_{k+1},y_{k+1})$ in Line~\ref{algeq:test_function} of APDB instead of $E_k(x_{k+1},y_{k+1})$ leads to a stronger condition; but, in practice, we have found this condition numerically more stable.}
\end{remark}
\subsection{Kernel Matrix Learning} We test the implementation of our method for solving the kernel matrix learning problem discussed in Section~\ref{sec:intro} for classification.
In particular, given a set of kernel matrices $\{K_\ell\}_{\ell=1}^M\subset\mathbb{S}^n_{+}$, consider the problem in~\eqref{eq:kernel_learn_simple}. When $\lambda>0$ and $C=\infty$, the objective is to find a kernel matrix $K^*\in\cK\triangleq\{\sum_{\ell=1}^M\eta_\ell K_\ell:\ \eta\geq \mathbf{0}\}$ that achieves the best training error for $\ell_2$-norm soft margin SVM, and when $\lambda=0$ and $C>0$, the objective is to find a kernel matrix $K^*\in\cK$ that gives the best performance for $\ell_1$-norm soft margin SVM. Once $(\alpha^*,\eta^*)$, a saddle point for \eqref{eq:kernel_learn_simple}, is computed using the training set $\cS$, one can construct $K^*=\sum_{\ell=1}^M{\eta_\ell^*}K_\ell$ and predict unlabeled data in the test set $\cT$ using the model $\cM:\reals^m\rightarrow\{-1,+1\}$ such that the predicted label of $\ba_i$ is $
\cM(\ba_i)={\rm sign}\Big(\sum_{j\in\cS}b_j\alpha^*_j K^*_{ji}+\gamma^*\Big),
$
for all $i\in\cT$ where for $\ell_1$ soft margin SVM, $\gamma^*=b_{i^*}-\sum_{j\in\cS}b_j\alpha^*_jK^*_{ji^*}$ for some $i^*\in\cS$ such that $\alpha^*_{i^*}\in (0,C)$, and for $\ell_2$ soft margin SVM, $\gamma^*=b_{i^*}(1-\lambda\alpha^*_{i^*})-\sum_{j\in\cS}b_j\alpha^*_jK^*_{ji^*}$ for some $i^*\in\cS$ such that $\alpha^*_{i^*}>0$. Note that \eqref{eq:kernel_learn_simple} is a special case of \eqref{eq:original-problem} for $f$, $\Phi$ and $h$ chosen as follows: let $y_\ell\triangleq\frac{\eta_\ell r_\ell}{c}$ for each $\ell$ and define $h(y)=\ind{\Delta}(y)$ where $y=[y_\ell]_{\ell=1}^M\in\reals^M$ and $\Delta$ is an $M$-dimensional unit simplex; $\Phi(x,y)= -2\be^\top x + \sum_{\ell=1}^M \frac{c}{r_\ell}y_\ell x^\top G(K^{tr}_\ell)x+\lambda\norm{x}_2^2$, and $f(x)=\ind{X}(x)$ where $X=\{x\in\reals^{n_{tr}}:~0\leq x\leq C,~ \fprod{\bb, x}=0\}$.
\subsubsection{{APD} vs \textbf{Mirror-prox} for soft-margin SVMs}
In this experiment, we 
compared our method against Mirror-prox, the primal-dual algorithm proposed by He et al.~\cite{he2015mirror}.
We used four different data sets available in UCI repository: \texttt{Ionosphere} (351 observations, 33 {features}), \texttt{Sonar} (208 observations, 60 {features}), \texttt{Heart} (270 observations, 13 {features}) and \texttt{Breast-Cancer} (608 observations, 9 {features}) with three given kernel functions ($M=3$); polynomial kernel function $k_1(\ba,\bar{\ba})=(1+\ba^\top\bar{\ba})^2$, Gaussian kernel function $k_2(\ba,\bar{\ba})=\exp(-0.5(\ba-\bar{\ba})^\top(\ba-\bar{\ba})/0.1)$, and linear kernel function $k_3(\ba,\bar{\ba})=\ba^\top\bar{\ba}$ to compute $K_1, K_2, K_3$ respectively. All the data sets are normalized such that each feature column is mean-centered and divided by its standard deviation. 
For $\ell_2$-norm soft margin we set $\lambda=1$, for $\ell_1$-norm soft margin SVM we set $C=1$ and for both SVMs $c=\sum_{\ell=1}^3 r_\ell$, where $r_\ell={\rm trace}(K_\ell)$ for $\ell=1,2,3$. 
The kernel matrices are normalized as in \cite{lanckriet2004learning}; {thus, $\diag(K_\ell)=\ones$, $r_\ell=n_{tr}+n_t$ and $c/r_\ell=3$ for $\ell=1,2,3$.}

We tested four different implementations of the APD algorithm: 
we will refer to the constant step version of APD, stated in Part I of the main result in Theorem~\ref{thm:main}, as {\bf APD1};
and we refer to the adaptive step version of APD, stated in Part II of the main result, as {\bf APD2}. Finally, we also implemented a variant of {\bf APD2} with periodic restarts, and we call it {\bf APD2-restart}.
The {\bf APD2-restart} method is implemented simply by restarting the algorithm periodically after every 500 iterations and using the most current iterate as the initial solution for the next call of {\bf APD2}. {All the algorithms are initialized from $x_0=\mathbf{0}$ and $y_0=\frac{1}{M}\ones$.}

The 
results reported are the average values over 10 random replications. In each replication, \%80 of the dataset is selected uniformly at random, and used for training; the rest of data set (\%20) is reserved as test data to calculate the test set accuracy (TSA), which is defined as the fraction of the correctly labeled data in the test set. The algorithms are compared in terms of relative error for the function value ($|\cL(x_k,y_k)-\cL^*|/|\cL^*|$) and for the solution ($\norm{x_k-x^*}_2/\norm{x^*}_2$), where $(x^*,y^*)$ denotes a saddle point for the problem of interest, i.e., \eqref{eq:kernel-l1} or \eqref{eq:kernel-l2}, and $\cL^*\triangleq \cL(x^*,y^*)$. To compute $(x^*,y^*)$, we called MOSEK through CVX \cite{grant2008cvx}.

{\bf $\ell_1$-norm Soft Margin SVM:}
Consider the following equivalent reformulation of $\ell_1$-norm soft margin problem:\vspace*{-4mm}
\begin{align}
&\min_{\substack{\rev{x:\ 0\leq x\leq C\be} \\ \fprod{\bb,x}=0}}\max_{y\in\Delta}
-2x^\top \be + \sum_{\ell=1}^M \frac{c}{r_\ell}~y_\ell~ x^\top G(K_\ell^{tr})x, \label{eq:kernel-l1}
\end{align}
Note that \eqref{eq:kernel-l1} is merely convex in $x$ and linear in $y$; therefore, we only used {\bf APD1}. 
Let $\norm{\cdot}$ denote the spectral norm; the Lipschitz constants defined in \eqref{eq:Lxx} and \eqref{eq:Lipschitz_y} can be set as $L_{xx}\triangleq 6\max_{\ell=1,2,3}\{\norm{G(K_\ell)}\}$, $L_{y y}=0$, $L_{y x}\triangleq 6\sqrt{3}C\max_{\ell=1,2,3}\{\norm{G(K_\ell)}\}$. Recall that the step-size of {\bf Mirror-prox} is determined by Lipschitz constant $L$ of $\grad \Phi$, and which can be set as $L=\sqrt{L_{xx}^2+L_{x y}^2+L_{yx}^2+L_{yy}^2}$, where $L_{xy}$ is defined similarly as $L_{yx}$ in~\eqref{eq:Lipschitz_y}, and for \eqref{eq:kernel-l1} one can take $L_{xy}=L_{yx}$.

In these experiments on $\ell_1$ soft margin problems, {\bf APD1} outperformed {\bf Mirror-prox} on all four data sets. In particular, {\bf APD1} and {\bf Mirror-prox} are compared in terms of relative errors for function value and for solution in Figures~\ref{fig:l1-obj} and \ref{fig:l1-sol} respectively. In these figures, relative 
errorr are plotted against the number of iterations.
In this experiment we observed that for fixed number of iterations \rev{$k$}, the run time for {\bf Mirror-prox} is at least twice of {\bf APD1} run time -- while { APD} requires one primal-dual prox operation, 
{\bf Mirror-prox} needs \emph{two} primal-dual prox operations at each iteration. 
It is worth mentioning that for \texttt{Ionosphere} and \texttt{Sonar} data sets, 1500 iterations of {\bf APD1} took 
roughly the same time as MOSEK required to solve the problem, and within 1500 iterations {\bf APD1} was able to generate a decent approximate solution with relative error less than $10^{-4}$ and with a high TSA value; 
on the other hand, {\bf Mirror-prox} was not able to produce such good quality solutions in a similar 
amount of time.
The average TSA of the optimal solution $(x^*,y^*)$, computed by MOSEK, are 93.81, 84.76, 84.07, 96.79 percent for \texttt{Ionosphere}, \texttt{Sonar}, \texttt{Heart} and \texttt{Breast-Cancer} data sets, respectively. Note that TSA is not necessarily increasing in the number of iterations \sa{$k$}, e.g., {\bf APD1} iterates at \sa{$k=1000$ and $k=2000$} have 85.95\% and 84.76\% TSA values, respectively, for \texttt{Sonar} data set -- note the optimal solution $(x^*,y^*)$ has 84.76\% {TSA -- see Table~\ref{table:l1} for details}. This is a well-known phenomenon and is related to over fitting; in particular, the model’s ability to generalize can weaken as it begins to overfit the training data.
\begin{table*}[h]
\centering
\resizebox{\columnwidth}{!}{%
\begin{tabular}{|c|c|c c c|c c c|c c c|c c c|}
\hline
\multicolumn{2}{| c |}{Iteration~\#}
& \multicolumn{3}{| c |}{k=1000} & \multicolumn{3}{| c |}{k=1500} & \multicolumn{3}{| c |}{k=2000} & \multicolumn{3}{| c |}{k=2500} \\
\hline
\mbox{} & \mbox{} &
& Rel. & & & Rel.& & & Rel.& & & Rel. & \\
Method & Data Set
& Time & subopt. & TSA & Time & error & TSA & Time & error & TSA & Time & error & TSA \\
\hline
\multirow{2}{*}{\bf APD1}
&Ionosphere
&   0.34& 5.6e-05   &	93.80	
&	0.50& 9.3e-06	&	93.80	
&	0.67& 1.6e-06	&	93.80	
&	0.84& 3.6e-07	&	93.80	\\
& Sonar
&	0.16& 4.6e-04	&	85.95	
&	0.24& 4.1e-05	&	84.52	
&	0.32& 2.1e-06	&	84.76	
&	0.41& 9.7e-08	&	84.76	\\
& Heart
&	0.31& 1.1e-06	&	83.89	
&	0.46& 3.6e-07	&	83.89	
&   0.60& 1.1e-07   &	83.89	
&	0.75& 3.6e-08	&	83.88	\\
&	Breast-cancer
&	0.98& 5.5e-03 	&	96.86	
&	1.59& 1.0e-03 	&	96.79	
&	2.19& 2.2e-04 	&	96.72	
&	2.79& 6.3e-05 	&   96.71\\
\hline
\multirow{ 2}{*}{\bf Mirror-}
&Ionosphere
&	0.84& 1.3e-04 	&	93.80	
&	1.30& 2.6e-05 	&	93.80	
&	1.80& 6.3e-06 	&	93.80	
&	2.28& 1.5e-06 	&	93.80	\\
& Sonar
& 0.47	& 4.3e-03 &	85.24	
& 0.71	& 3.4e-04 &	85.48	
& 0.96	& 2.9e-05 &	84.52	
& 1.24	& 2.9e-06 &	84.76	\\
{\bf prox}& Heart
&   0.75 & 1.9e-06	&	83.89	
&	1.16 & 7.5e-07	&	83.89	
&	1.60 & 2.9e-07	&	83.89	
&	2.04 & 1.2e-07	&	83.88	\\
& Breast-cancer
&  2.56 & 1.1e-02	&	96.86	
&  4.24 & 2.6e-03	&	96.86	
&  5.91 & 6.8e-04	&	96.72	
&  7.54	& 2.0e-04   &	96.71	\\
\hline
\end{tabular}%
}
\caption{$\ell_1$-norm soft margin: runtime (sec), relative error for $\cL(x_k,y_k)$ and TSA (\%) for {\bf APD1} and {\bf Mirror-prox} at iteration $k\in\{1000, 1500, 2000, 2500\}$.}
\vspace*{-5mm}
\label{table:l1}
\end{table*}

\begin{figure}[h!]
\centering
\includegraphics[scale=0.16]{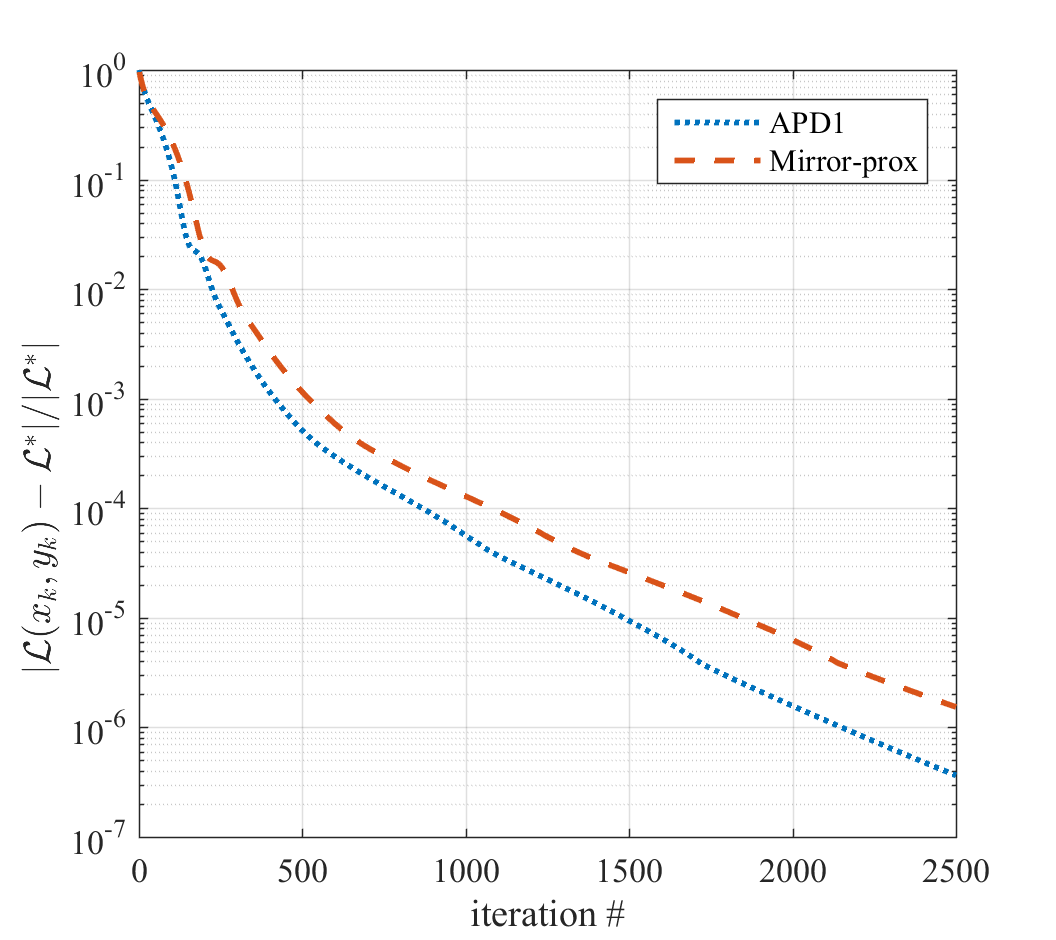}
\includegraphics[scale=0.16]{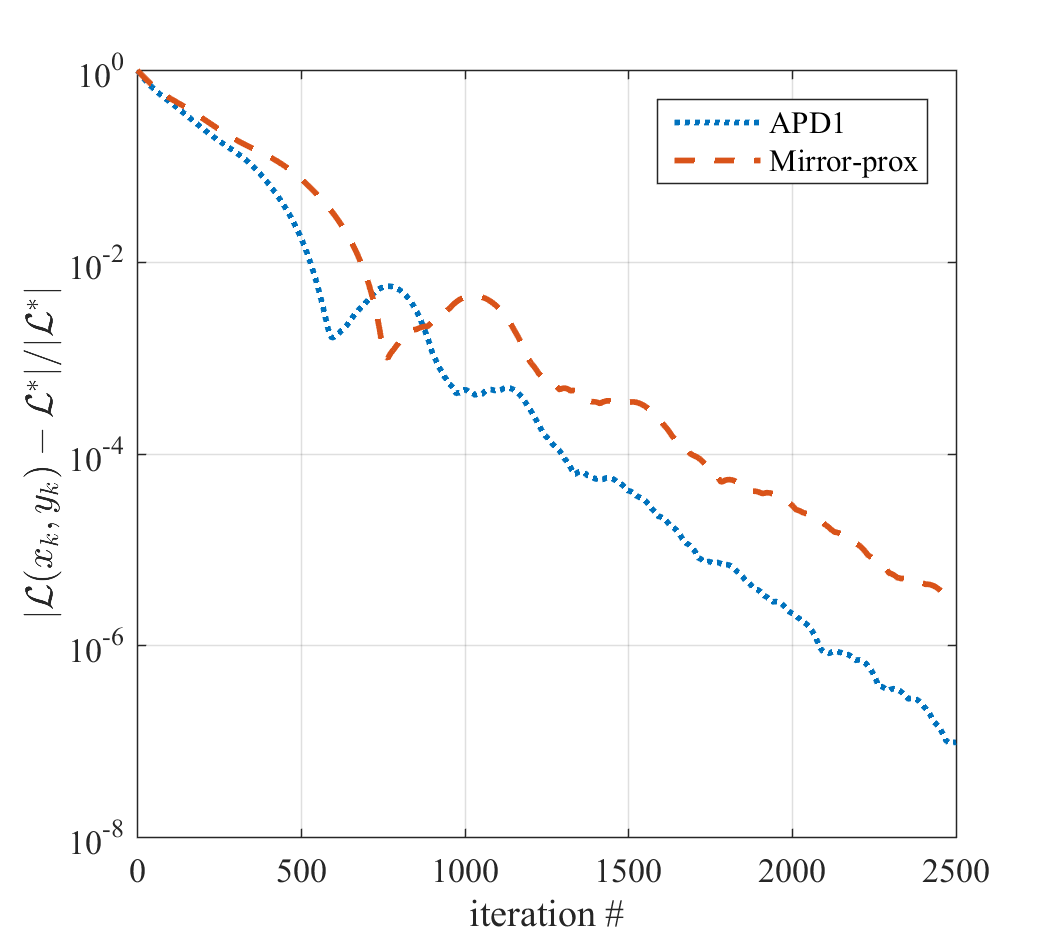}
\includegraphics[scale=0.16]{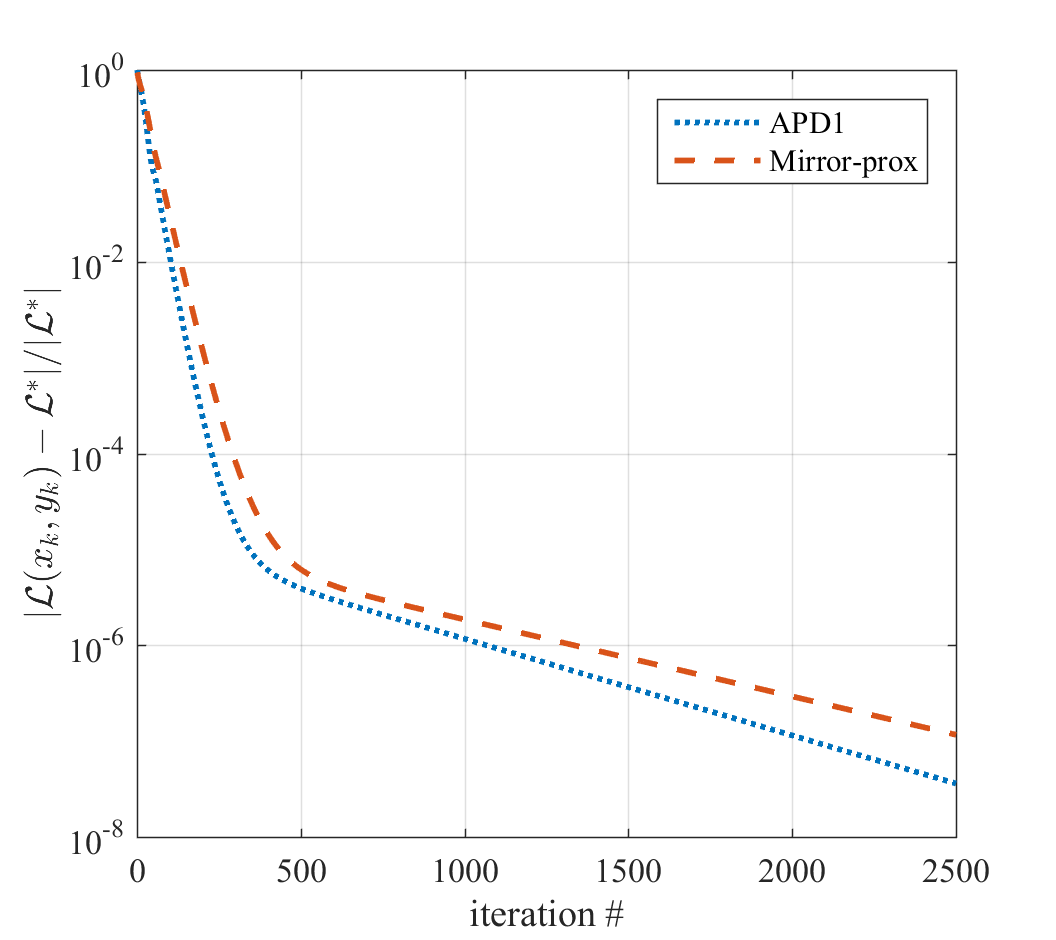}
\includegraphics[scale=0.16]{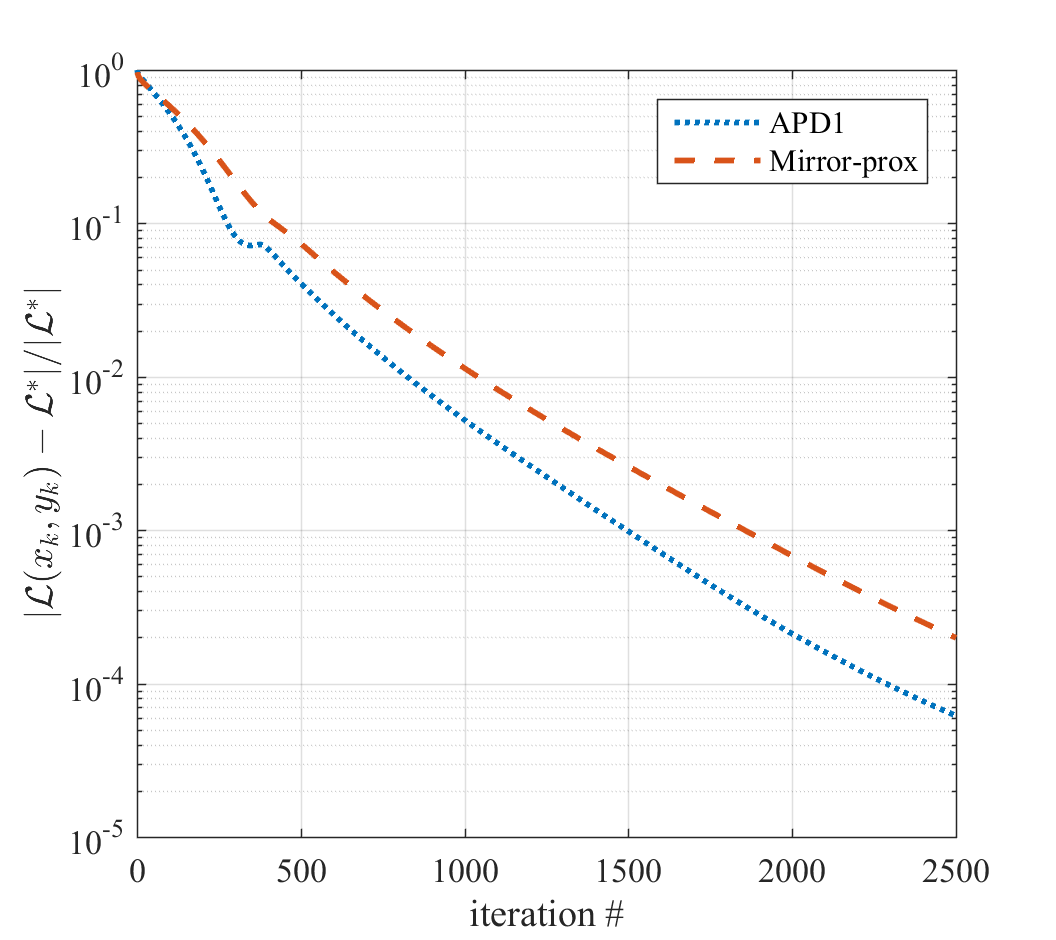}
\caption{$\ell_1$-norm soft margin: {\bf APD1} vs {\bf Mirror-prox} in terms of relative error for $\cL(x_k,y_k)$. The plots from left to right: 
\texttt{Ionosphere}, \texttt{Sonar}, \texttt{Heart}, \texttt{Breast-Cancer}.}
\vspace*{-2mm}
\label{fig:l1-obj}
\end{figure}
\begin{figure}[h!]
\centering
\includegraphics[scale=0.16]{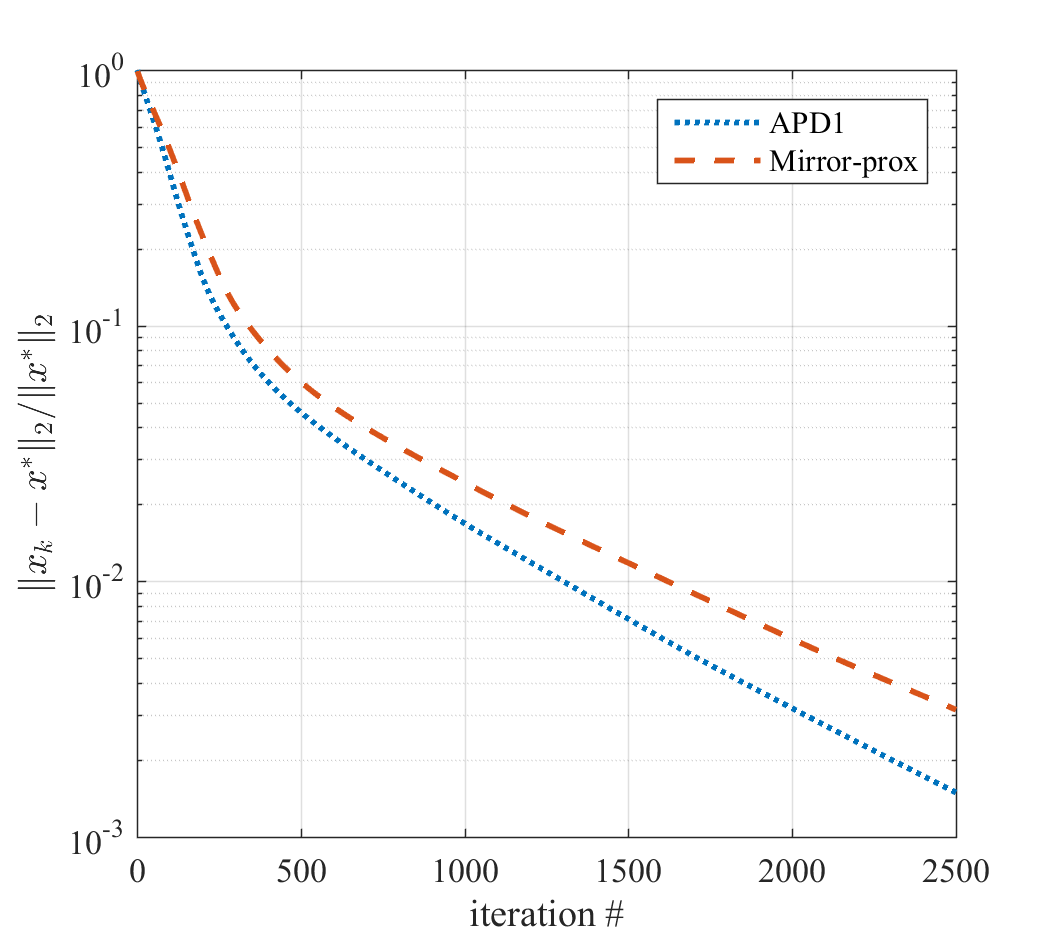}
\includegraphics[scale=0.16]{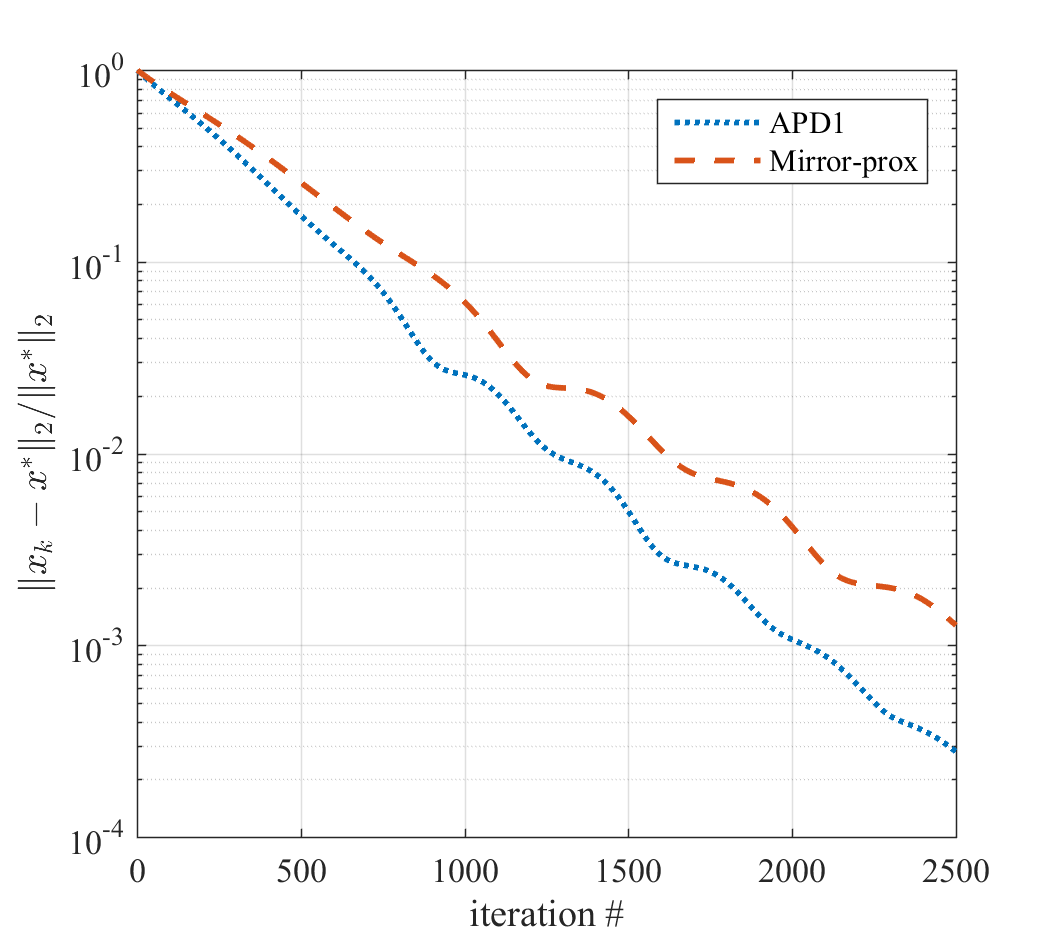}
\includegraphics[scale=0.16]{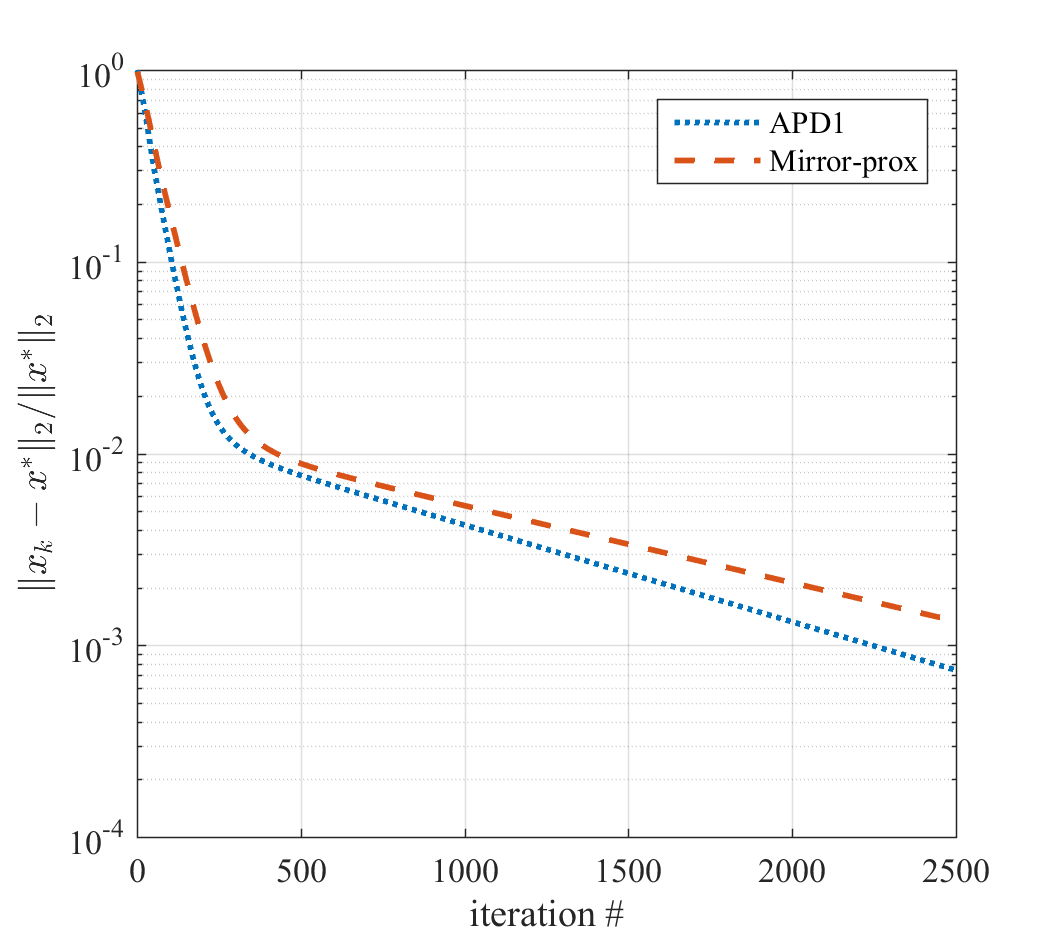}
\includegraphics[scale=0.16]{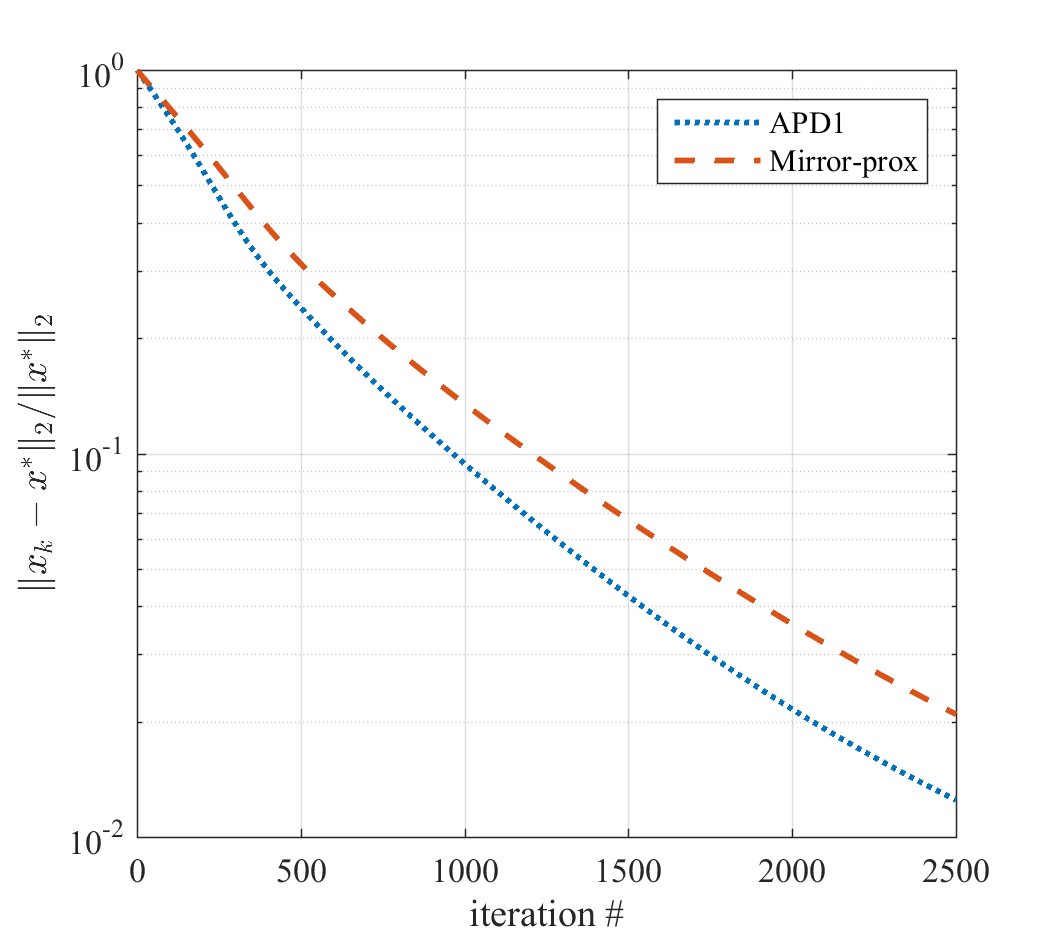}
\caption{$\ell_1$-norm soft margin: {\bf APD1} vs {\bf Mirror-prox} in terms of relative error for $x_k$. The plots from left to right: 
\texttt{Ionosphere}, \texttt{Sonar}, \texttt{Heart}, \texttt{Breast-Cancer}.}
\vspace*{-5mm}
\label{fig:l1-sol}
\end{figure}
{\bf $\ell_2$-norm Soft Margin SVM:}
Consider the following equivalent reformulation of $\ell_2$-norm soft margin problem:
\begin{align}
&\min_{\substack{x\geq 0\\ \fprod{\bb,x}=0}}\max_{y\in\Delta}
-2x^\top \be + \sum_{\ell=1}^M \frac{c}{r_\ell}~y_\ell~x^\top G(K_\ell^{tr})x+\lambda\norm{x}_2^2. \label{eq:kernel-l2}
\vspace*{-2mm}
\end{align}
Since \eqref{eq:kernel-l2} is strongly convex in $x$ and linear in $y$, we implement both {\bf APD2} and {\bf APD2-restart} methods in addition to {\bf APD1}. 
Due to strong convexity, $\norm{x^*}_2$ can be bounded depending on $\lambda>0$ and the Lipschitz constants can be computed similarly as in $\ell_1$-norm soft margin problem. In these experiments on $\ell_2$ soft margin problems, {\bf APD2-restart} outperformed all other methods
on all four data sets. In particular, {\bf APD1}, {\bf APD2}, {\bf APD2-restart} and {\bf Mirror-prox} are compared in terms of relative errors for function value and for solution in Figures~\ref{fig:l2-obj} and \ref{fig:l2-sol} respectively. Given an accuracy level, both {\bf APD1} and {\bf APD2-restart} can compute a solution with a given accuracy requiring much fewer iterations than {\bf Mirror-prox} needs.
In addition, 
we observed that for fixed number of iterations the run time for {\bf Mirror-prox} is almost twice the run time for any {APD} implementation -- see Table~\ref{table:l2}, {and for runtime comparison on larger size problems, see Section~\ref{sec:experiment-ipm}}.
Interpreting the results in Figures~\ref{fig:l2-obj} and \ref{fig:l2-sol}, and computational time, we conclude that {APD} implementations can compute a solution with a given accuracy in a significantly lower time than {\bf Mirror-prox} requires. For instance, consider the results for \texttt{Sonar} data set in Figure~\ref{fig:l2-obj}, to compute a solution with $|\cL(x_k,y_k)-\cL^*|/|\cL^*|<10^{-6}$, {\bf APD2-restart} requires 1000 {iterations}, on the other hand, {\bf Mirror-prox} needs around 2000 iterations; hence, {\bf APD2-restart} can compute it in 1/4 of the run time for {\bf Mirror-prox}. This effect is more apparent when these methods are compared on larger scale problems, e.g., see Figure~\ref{fig:largedata-sido}.

\begin{table*}[h]
\centering
\resizebox{\columnwidth}{!}{%
\begin{tabular}{|c|c|c c c|c c c|c c c|c c c|}
\hline
\multicolumn{2}{| c |}{Iteration~\#}
& \multicolumn{3}{| c |}{k=1000} & \multicolumn{3}{| c |}{k=1500} & \multicolumn{3}{| c |}{k=2000} & \multicolumn{3}{| c |}{k=2500} \\
\hline
\mbox{} & \mbox{} &
& Rel. & & & Rel. & & & Rel. & & & Rel. & \\
Method & Data Set
& Time & error & TSA & Time & error & TSA & Time & error & TSA & Time & error & TSA \\
\hline
\multirow{ 2}{*}{\bf APD1}
&	Ionosphere	
&	0.29    &	6.2e-07	&   93.9	
&   0.45    &	1.6e-06	&	93.9
&	0.60    &	1.6e-06	&   93.9	
&	0.75    &	1.6e-06	&	93.9	\\
&	Sonar	
&	0.15    &	8.3e-05	&   82.6	
&   0.23    &	1.3e-06	&	82.4
&	0.31    &	2.3e-08	&   82.1	
&	0.40	&	3.6e-10	&	82.1	\\
&	Heart	
&	0.26    &	3.0e-11	&   83.0
&   0.38	&	3.0e-11	&   83.0
&	0.50    &	3.0e-11	&   83.0	
&	0.62	&	3.0e-11	&	83.0	\\
&	Breast-Cancer	
&	0.76	&	7.5e-05	&	96.9	
&	1.22	&	4.4e-06 &	96.9	
&	1.65    &	4.4e-07	&	96.9	
&	2.08	&	5.5e-08	&	96.9	\\

\hline
\multirow{2}{*}{\bf APD2}
&	Ionosphere	
&	0.32	&	1.6e-06	&	93.9	
&	0.48	&	1.6e-06	&	93.9	
&	0.65	&	1.6e-06	&	93.9		
&	0.82	&	1.6e-06	&	93.9 \\
&	Sonar	
&	0.15	&	4.1e-06	&	82.4	
&	0.23	&	2.0e-07	&	82.1	
&	0.31	&	9.5e-09	&	82.1		
&	0.38	&	9.4e-10	&	82.1 \\
&	Heart	
&	0.23	&	4.5e-11	&	83.0	
&	0.34	&	3.3e-11	&	83.0	
&	0.45	&	3.1e-11	&	83.0		
&	0.57	&	3.1e-11	&	83.0 \\
&	Breast-Cancer	
&	0.82	&	4.9e-06	&	96.9	
&	1.23	&	7.9e-07	&	96.9	
&	1.66	&	2.4e-07	&	96.9		
&	2.08	&	9.3e-08	&	96.9 \\
\hline
\multirow{ 2}{*}{\bf APD2-}
&	Ionosphere	
&	0.33	&	1.6e-06	&	93.9	
&	0.49	&	1.6e-06	&	93.9	
&	0.64	&	1.6e-06	&	93.9	
&	0.80	&	1.6e-06	&	93.9\\
&	Sonar	
&	0.15	&	1.0e-06	&	82.1	
&	0.23	&	2.1e-08	&	82.1	
&	0.32	&	6.5e-11	&	82.1	
&	0.40	&	9.9e-12	&	82.1\\
{\bf restart}&	Heart	
&	0.23	&	3.0e-11	&	83.0	
&	0.35	&	3.0e-11	&	83.0	
&	0.47	&	3.0e-11	&	83.0	
&	0.58	&	3.0e-11	&	83.0\\
&	Breast-Cancer	
&	0.80	&	6.9e-07	&	96.9	
&	1.23	&	1.7e-08	&	96.9	
&	1.67	&	5.7e-10	&	96.9	
&	2.10	&	7.2e-11	&	96.9\\
\hline
\multirow{ 2}{*}{\bf Mirror-}
&	Ionosphere	
&	0.60	&	1.6e-06	&	93.9	
&	0.89	&	1.6e-06	&	93.9	
&	1.20	&	1.6e-06	&	93.9	
&	1.51	&	1.6e-06	&	93.9\\
&	Sonar	
&	0.31	&	5.5e-04	&	82.4	
&	0.45	&	1.4e-05	&	82.4	
&	0.61	&	6.9e-07	&	82.1	
&	0.76	&	2.1e-08	&	82.1\\
{\bf prox}&	Heart	
&	0.45	&	3.0e-11	&	83.0	
&	0.68	&	3.0e-11	&	83.0	
&	0.91	&	3.0e-11	&	83.0	
&	1.14	&	3.0e-11	&	83.0\\
&	Breast-Cancer	
&	1.48	&	2.0e-04	&	96.9	
&	2.35	&	1.4e-05	&	96.9	
&	3.23	&	1.6e-06	&	96.9	
&	4.07	&	2.5e-07	&	96.9\\
\hline
\end{tabular}%
}
\caption{$\ell_2$-norm soft margin: runtime (sec), relative error for $\cL(x_k,y_k)$ and TSA (\%) for {\bf APD1}, {\bf APD2}, {\bf APD2-restart} and {\bf Mirror-prox} at iteration $k\in\{1000, 1500, 2000, 2500\}$.}
\vspace*{-8mm}
\label{table:l2}
\end{table*}
\begin{figure}[h!]
\centering
\includegraphics[scale=0.16]{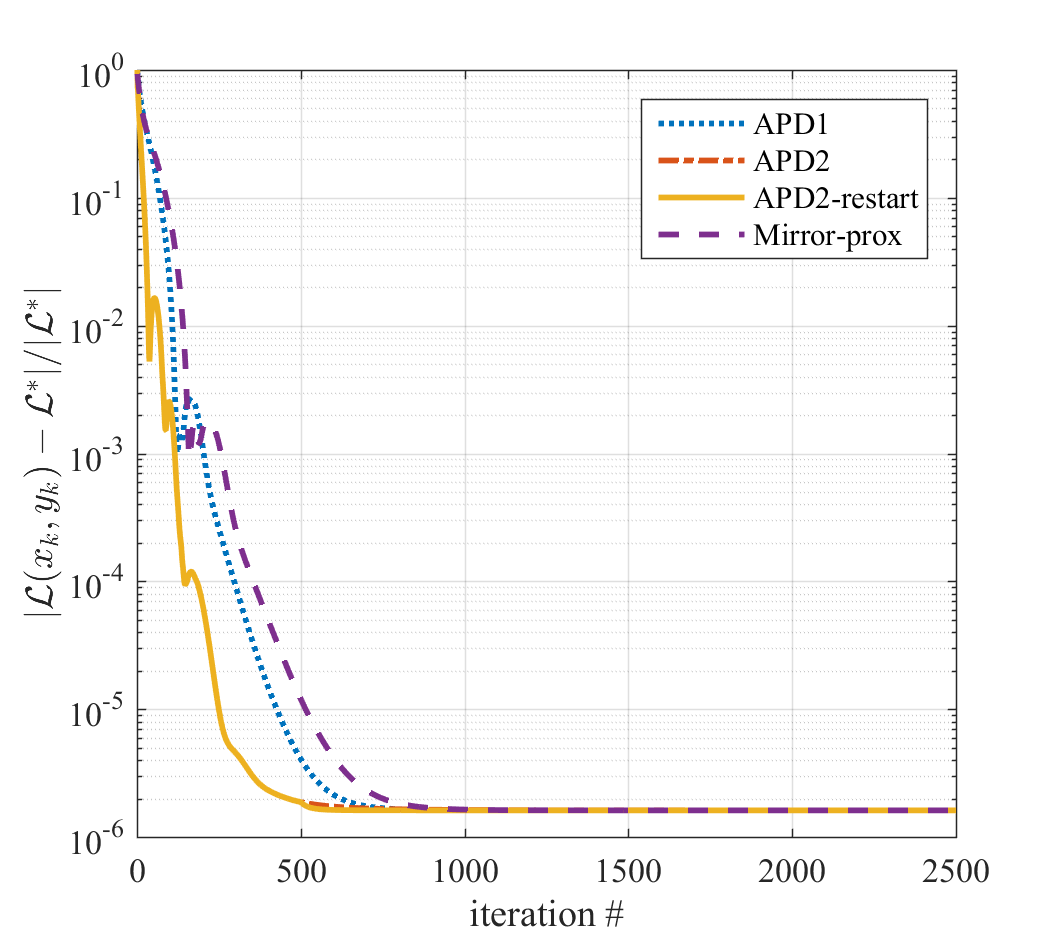}
\includegraphics[scale=0.16]{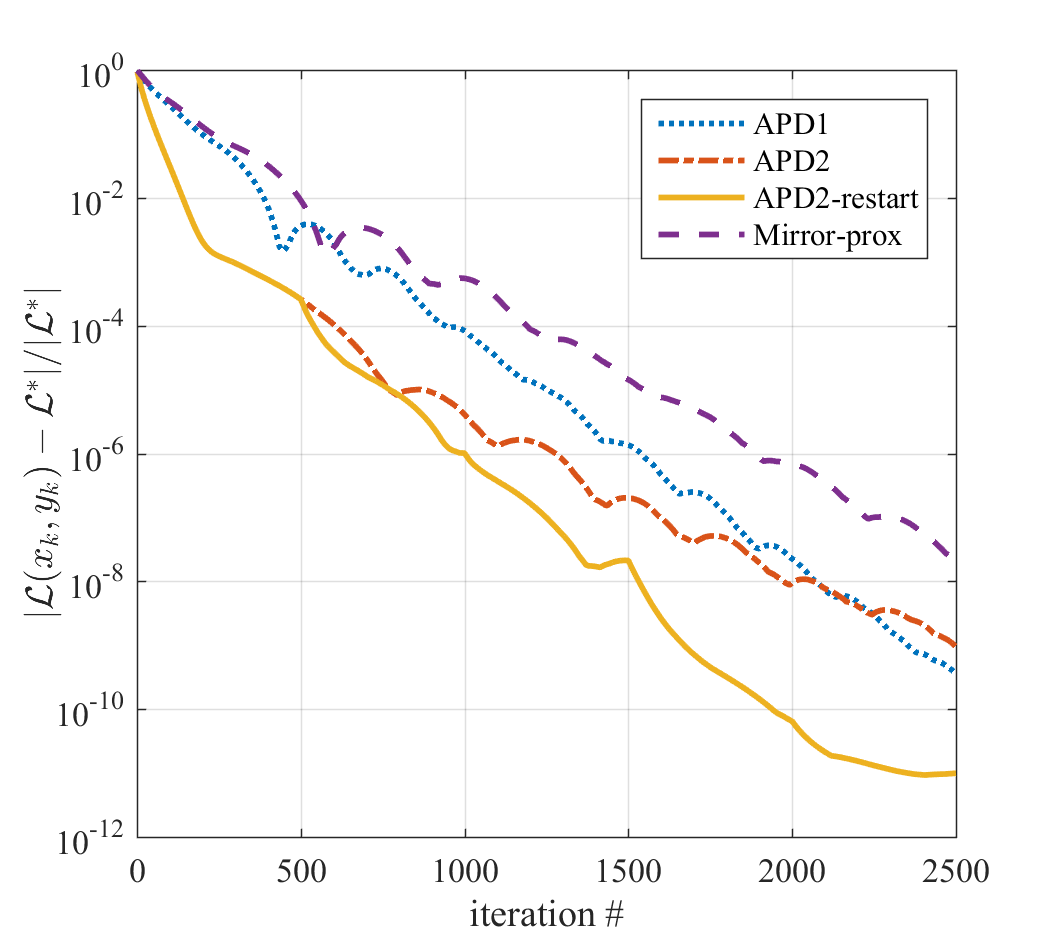}
\includegraphics[scale=0.16]{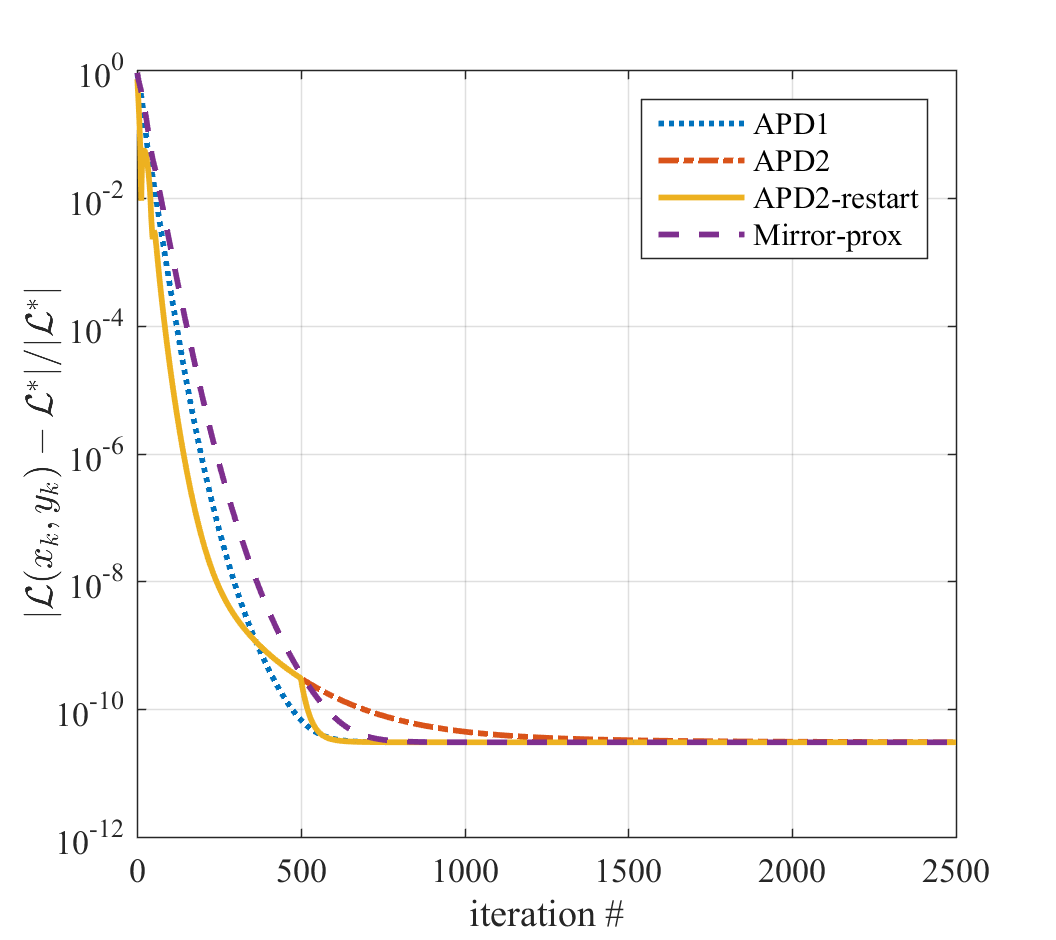}
\includegraphics[scale=0.16]{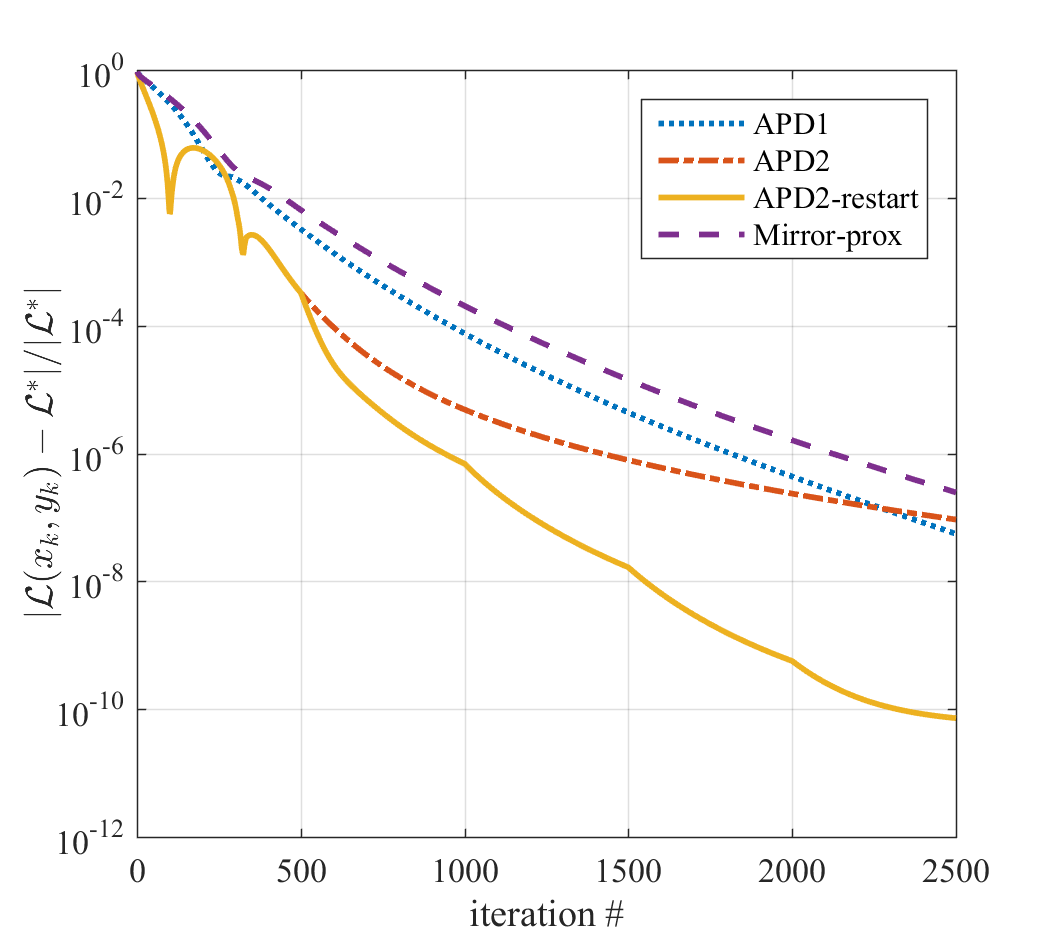}
\caption{$\ell_2$-norm soft margin: Comparison of {\bf APD1}, {\bf APD2}, {\bf APD2-restart} and {\bf Mirror-prox} in terms of rel. error for $\cL(x_k,y_k)$. From left to right: \texttt{Ionosphere}, \texttt{Sonar}, \texttt{Heart}, \texttt{Breast-Cancer}.}
\vspace*{-8mm}
\label{fig:l2-obj}
\end{figure}
\begin{figure}[h!]
\centering
\includegraphics[scale=0.16]{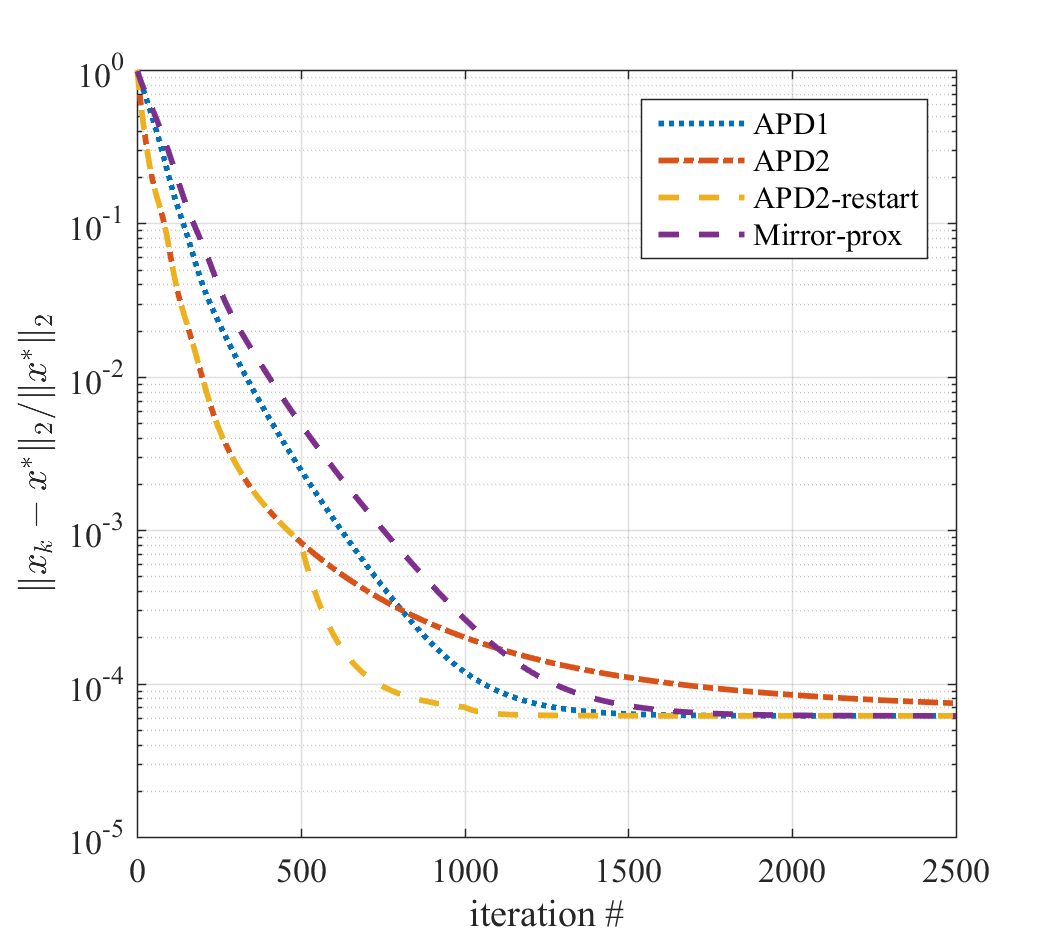}
\includegraphics[scale=0.16]{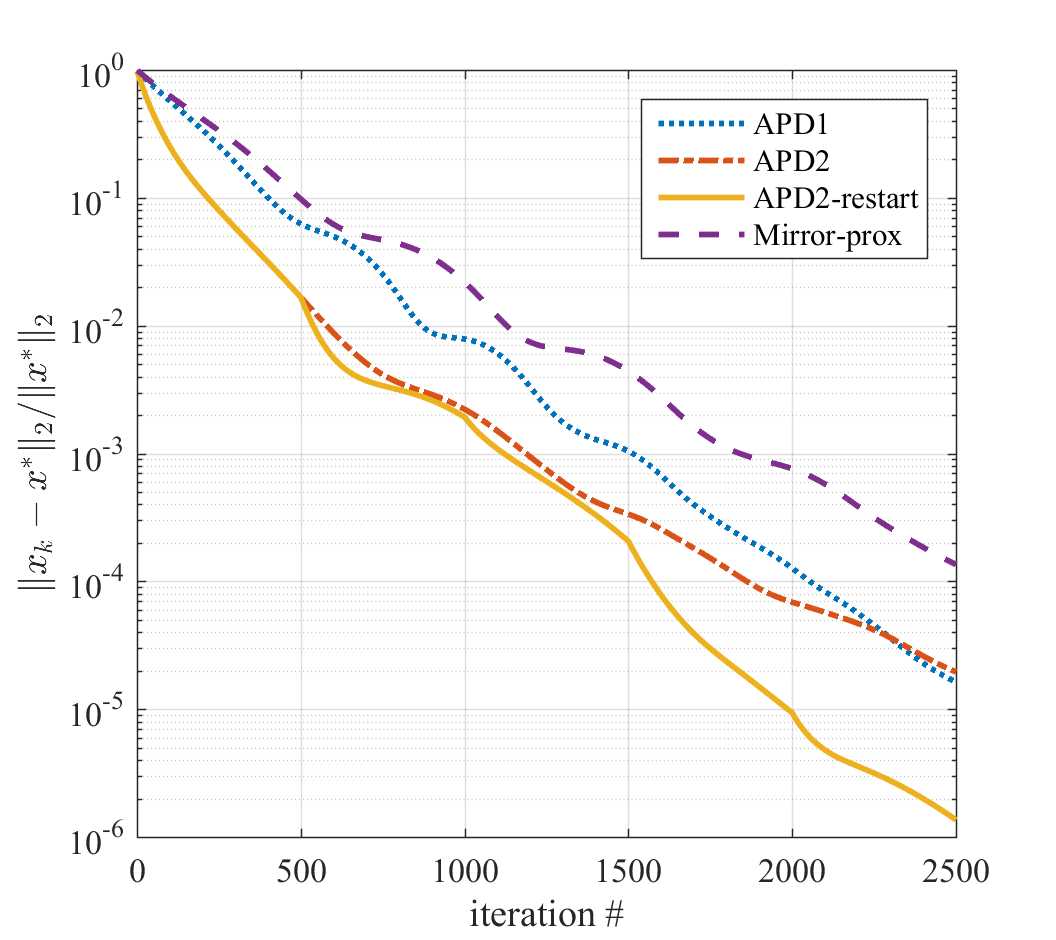}
\includegraphics[scale=0.16]{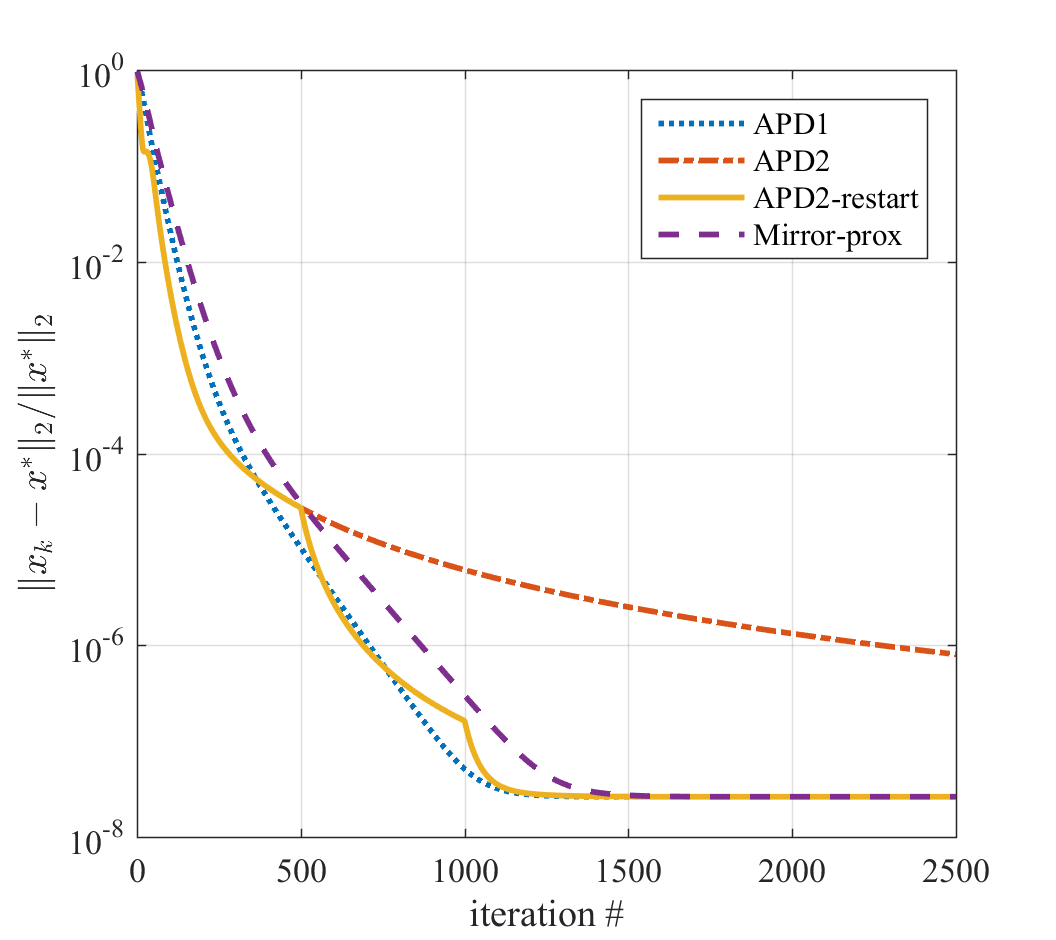}
\includegraphics[scale=0.16]{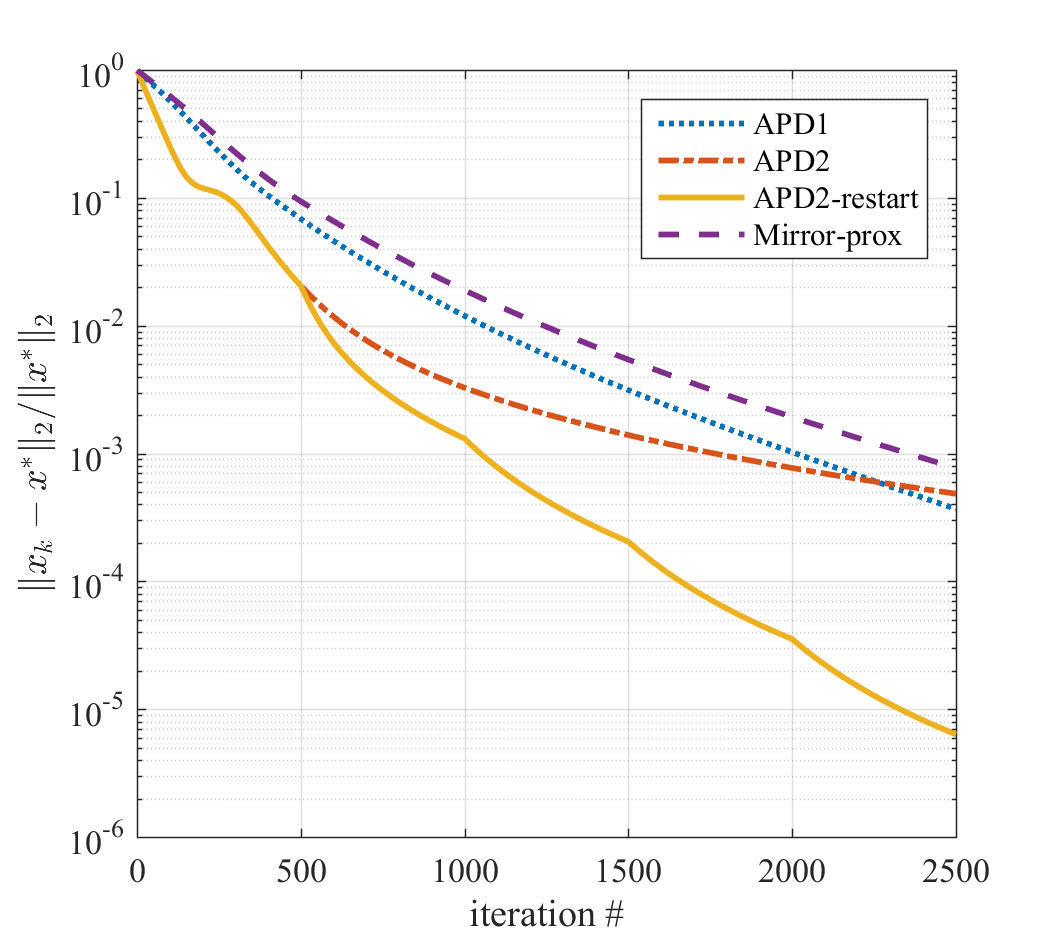}
\caption{$\ell_2$-norm soft margin: Comparison of {\bf APD1}, {\bf APD2}, {\bf APD2-restart} and {\bf Mirror-prox} in terms of rel. error for $x_k$. From left to right: \texttt{Ionosphere}, \texttt{Sonar}, \texttt{Heart}, \texttt{Breast-Cancer}.}
\vspace*{-8mm}
\label{fig:l2-sol}
\end{figure}
\subsubsection{ {APD} vs off-the-shelf interior point methods}
\label{sec:experiment-ipm}
In this section we compare time complexity of our methods against widely-used, open-source interior point method~(IPM) solvers {\bf Sedumi} v4.0 and {\bf SDPT3} v1.3. Here we used two data sets: \texttt{Breast-Cancer} available in UCI repository and {\texttt{SIDO0}}~\cite{guyon2008sido} (6339 observations, 4932 features -- we used half of the observations in data set). 
The goal is to investigate how the quality of iterates, measured in terms of relative solution error $\norm{x_k-x^*}_2/\norm{x^*}_2$, changes as run time progresses. Let $x^*$ be the solution of problem \eqref{eq:kernel-l2} which we computed using MOSEK in CVX with the best accuracy option.
To compare {\bf APD1}, {\bf APD2}, {\bf APD2-restart} and {\bf Mirror-prox} against the interior point methods {\bf Sedumi} and {\bf SDPT3}, the problem in~\eqref{eq:kernel-l2} is first solved by {\bf Sedumi} and {\bf SDPT3} using their default setting. Let $t_1$ and $t_2$ denote the run time of {\bf Sedumi} and {\bf SDPT3} in seconds, respectively. Next, the primal-dual methods {\bf APD1}, {\bf APD2}, {\bf APD2-restart} and {\bf Mirror-prox} were run with the same settings as  in $\ell_2$-norm soft margin experiment for $\max\{t_1,t_2\}$ seconds. 
The mean of relative solution error of the iterates over 10 replications are plotted against time (seconds) for each of these methods in Figure~\ref{fig:largedata}.
{\bf Sedumi} and {\bf SDPT3} are second-order methods and have much better theoretical convergence rates compared to the first-order primal dual methods proposed in this paper. However, for large-scale machine learning problems with dense data, the work per iteration of an IPM is significantly more than that of a first-order method; hence, if the objective is to attain low-to-medium level accuracy solutions, then first-order methods are better suited for this task, e.g., in Figure~\ref{fig:largedata-sido}, {\bf APD2-restart} iterates are more accurate than {\bf Sedumi} and {\bf SDPT3} for the first 2000 and 1000 seconds respectively.\vspace*{-2mm}
\begin{figure}[htbp]
\centering
\begin{subfigure}[b]{0.45\textwidth}
\includegraphics[width=5.8cm, height=4.5cm]{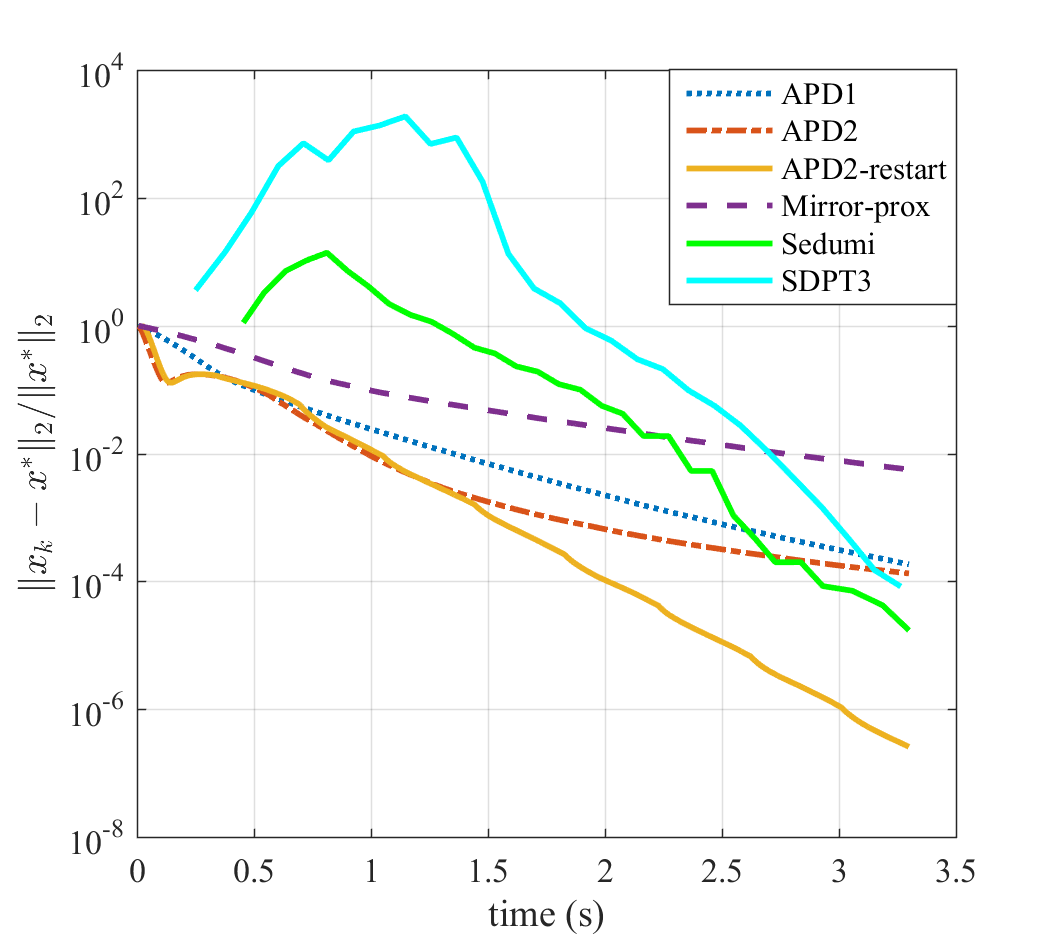}
\caption{{\texttt{Breast-Cancer}}}
\label{fig:largedata-bc}
\end{subfigure}
\begin{subfigure}[b]{0.45\textwidth}
\includegraphics[width=5.8cm, height=4.5cm]{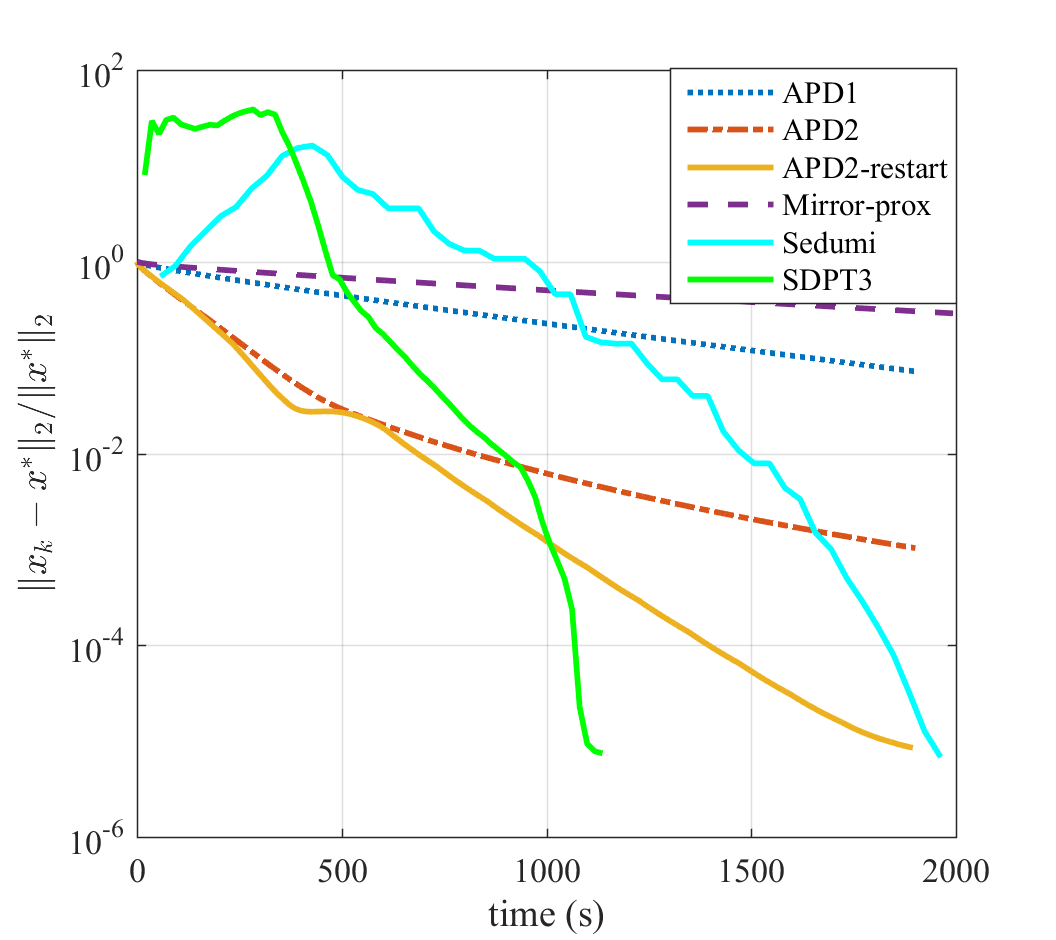}
\caption{{\texttt{SIDO0}}}
\label{fig:largedata-sido}
\end{subfigure}
\caption{Run time comparison of {\bf APD1}, {\bf APD2}, {\bf APD2-restart}, {\bf Mirror-prox}, {\bf Sedumi}, and {\bf SDPT3} in terms of relative error for $x_k$ on $\ell_2$ soft margin problems.}
\vspace*{-8mm}
\label{fig:largedata}
\end{figure}
\subsection{\sa{Quadratic Constrained Quadratic Programming (QCQP)}}\label{sec:qcqp} In this subsection, we compare APDB against linearized augmented Lagrangian method (LALM)~\cite{xu2017first} and proximal extrapolated gradient method~(PEGM) \cite{malitsky2018proximal} on randomly generated QCQP problems.

We consider a QCQP problem of the following form:\vspace*{-3mm}
{\small
\begin{subequations}
\begin{align}
\rho^*\triangleq\min_{x\in X} \quad &\rho(x)\triangleq {\frac{1}{2}x^\top A_0 x +b_0^\top x} \label{qcqp:obj}\\
\hbox{s.t.}\quad &G_j(x)\triangleq \frac{1}{2}x^\top A_j x +b_j^\top x-c_j\leq 0, \quad  j\in\{1,\hdots,m\},
\label{qcqp:const}
\end{align}
\end{subequations}}%
where $X=[-10,10]^n$, $\{A\}_{j=0}^m\subseteq \reals^{n\times n}$ are positive semidefinite matrices generated randomly; $\{b_j\}_{j=0}^m\subseteq \reals^n$ are generated randomly with elements drawn from standard Gaussian distribution; and $\{c_j\}_{j=1}^m\subseteq \reals$ are generated randomly with elements drawn from Uniform distribution over $[0,1]$.

We consider two different scenarios: QCQPs with merely convex $\rho(\cdot)$ and QCQPs with strongly convex $\rho(\cdot)$. For merely convex setup, we set $A_j=\Lambda_j^\top S_j \Lambda_j$ for any $j\in\{0,1,\hdots,m\}$ where $\Lambda_j\in\reals^{n\times n}$ is a random orthonormal matrix and $S_j\in\reals_+^{n\times n}$ is a diagonal matrix whose diagonal elements are generated uniformly at random from $[0,100]$ with $0$ included as the minimum element of $S_j$ for merely convex scenario. To generate $\Lambda_j$, we first generate a random matrix $\tilde{\Lambda}_j\in\reals^{n\times n}$ with each entry sampled from standard Gaussian distribution, then we called MATLAB function $\rm{orth}(\tilde{\Lambda}_j)$, which returns an orthonormal basis for the range of $\tilde{\Lambda}_j$. We generate the data for strongly convex problem instances similarly, except for $j=0$; to generate the objective function for the strongly convex case, the diagonal elements of $S_0$ are generated from $[1,101]$. To test, we generate 10 random i.i.d. problem instances for each scenario.

For this experiment, we set $n=10^3$ and $m=10$. For both merely convex and strongly convex setups, we implemented all the algorithms on 10 randomly generated QCQP instances, and we ran them until the termination condition $\max\{|\rho(x_k)-\rho^*|/|\rho^*|,~\frac{1}{n}\sum_{j=1}^n G_j(x_k)\}\leq \epsilon$ holds for $\epsilon=10^{-8}$.
The backtracking parameter is set to $\eta=0.7$ for all the methods we tested, i.e., APDB, LALM and PEGM. We use the technique suggested in Remark~\ref{rem:larger-step} to possibly select larger step-sizes for APDB. For PEGM, we implemented Algorithm~2 in \cite{malitsky2018proximal} where step-size $\lambda_k$ is increased at the beginning of each iteration, and for LALM we use Algorithm 1 in~\cite{xu2017first} with backtracking. Note that in \cite{xu2017first} an increase in step-sizes has not been considered; hence, the primal step-size at the beginning of each iteration is set to be the last primal step-size. Step-size initializations for PEGM and LALM are done in accordance with APDB, for which we set $\gamma_0=1$ and {$\bar{\tau}=10^{-3}$}. We will refer to APDB with $\mu>0$ and $\mu=0$ as APDB1 and APDB2 respectively.

To have a fair comparison among the methods, we plot the statistics againts the total number of gradient ($\grad_x\Phi$ and $\grad_y\Phi$) computations. Figure~\ref{fig:QCQP} illustrates illustrate the performance of the algorithms based on relative suboptimality and infeasibility error. The solid lines indicate the average statistics over 10 randomly generated replications and {the shaded region around each line shows the range statistic over all random instances}. We observe that APDB outperforms the other two competitive methods; moreover, APDB2 through exploiting the strong convexity ($\mu>0$) of the problem achieves $\epsilon$-accuracy faster than APDB1.

\begin{figure}
\centering
\includegraphics[scale=0.2]{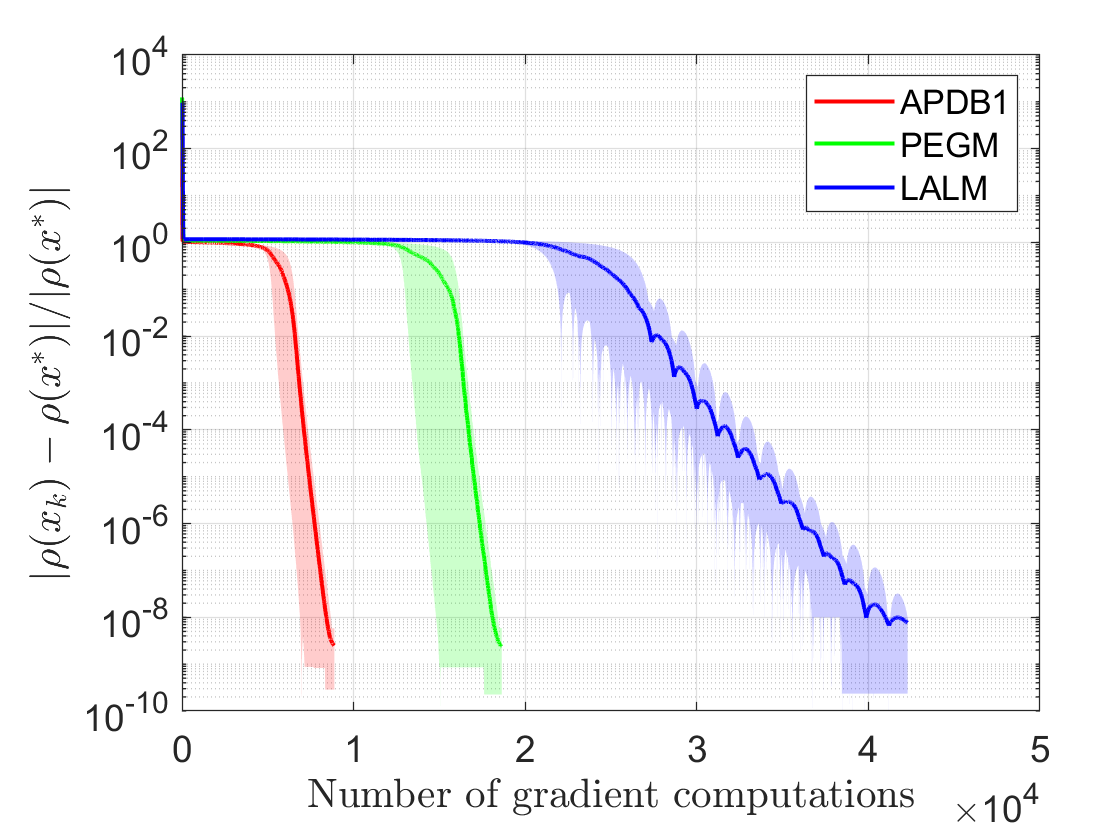}
\includegraphics[scale=0.2]{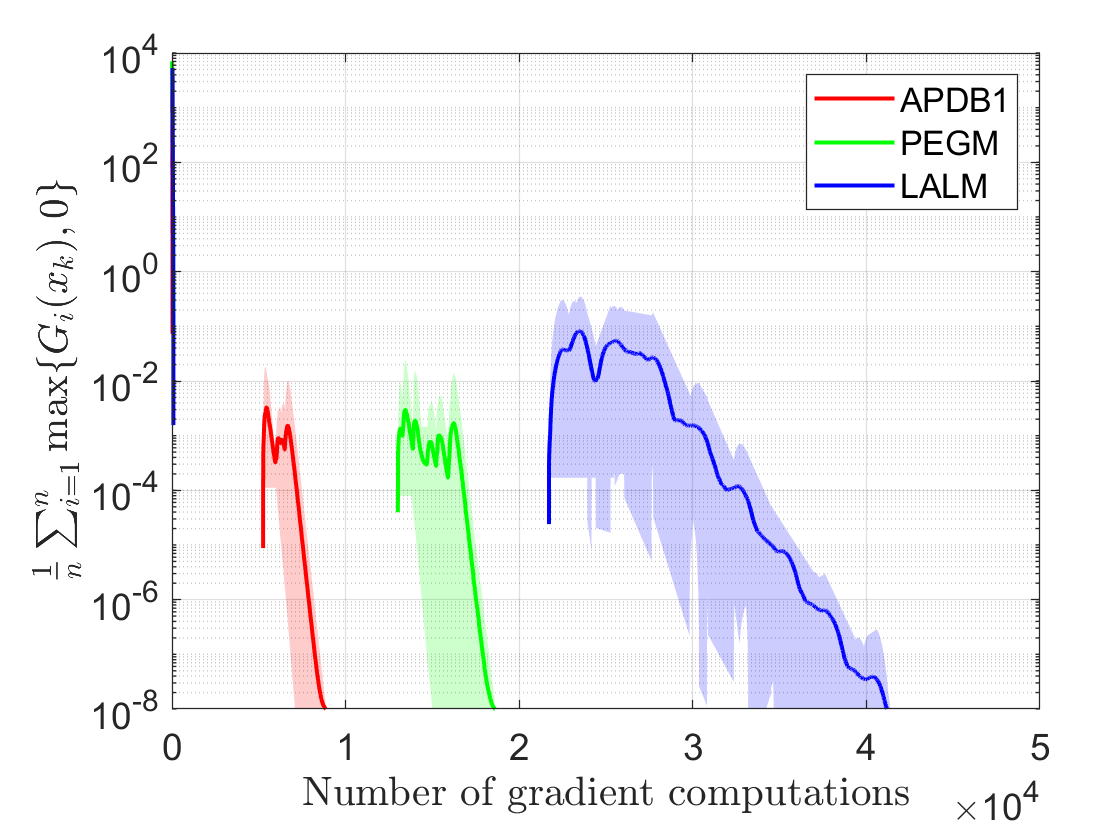}
\includegraphics[scale=0.2]{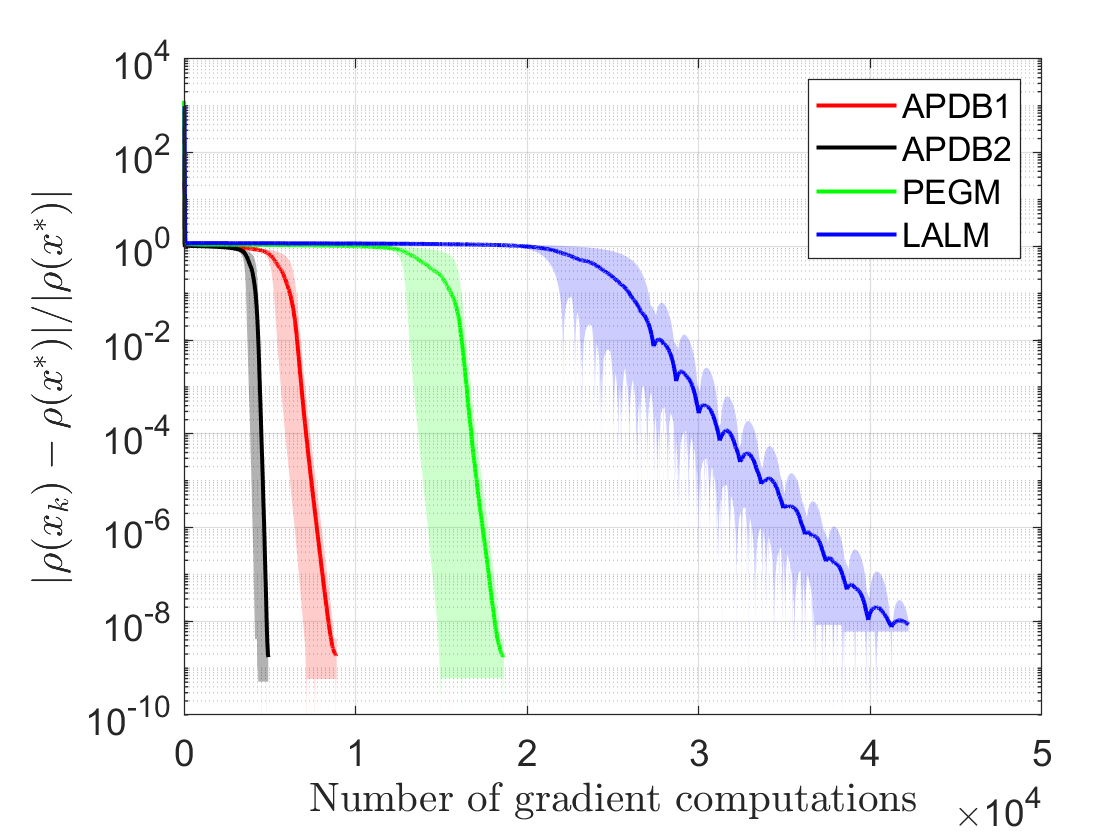}
\includegraphics[scale=0.2]{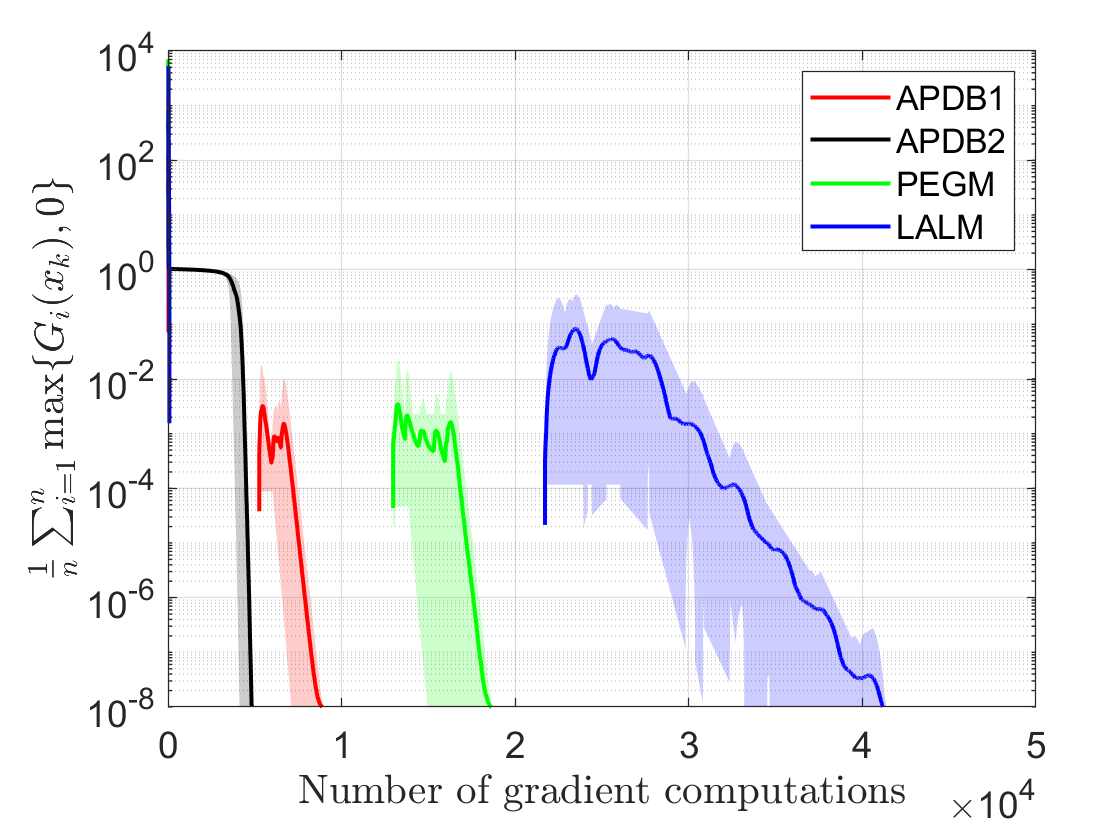}
\caption{Comparison of methods in terms of suboptimality (left) and infeasibility (right) for {merely convex (top row) and strongly convex (bottom row)} setting.}
\label{fig:QCQP}
\vspace*{-8mm}
\end{figure}

\subsection{Regression with Fairness Constraints} \label{sec:RFC}
\rev{In this experiment, we consider a 
regression problem \rev{with fairness constraints~\cite{komiyama2018nonconvex} to compute
a regressor such that its correlation with the sensitive attributes (e.g., race, gender, age) go to $0$ as the number of samples $N\to\infty$. For each $i\in\{1,\ldots,N\}$, the sample point data $(z^{(i)},w^{(i)},s^{(i)})$ consists of target attribute $z^{(i)}\in\reals$ we want to predict, non-sensitive attributes $w^{(i)}\in\reals^{d_w}$ and sensitive attributes $s^{(i)}\in\reals^{d_s}$.
Let $W\triangleq [w^{(1)},\hdots,w^{(N)}]^\top\in \reals^{N\times d_w}$, $S\triangleq [s^{(1)},\hdots,s^{(N)}]^\top\in \reals^{ N\times d_s}$, and let $z\triangleq[z^{(i)}]_{i=1}^N\in\reals^N$}. Suppose $s^{(i)}$ is highly correlated with $w^{(i)}$, i.e., there exists $B\in\reals^{d_s\times d_w}$ such that $w^{(i)}=B^\top s^{(i)}+\varepsilon^{(i)}$ and $\mathbb{E}[\varepsilon^{(i)}|s^{(i)}]=\mathbf{0}$ for all $i\geq 1$. Under this assumption, given i.i.d. $\{(w^{(i)},s^{(i)})\}_{i=1}^N$, one can estimate $B$ with $\hat{B}=(S^\top S)^{-1} S^\top W$ which converges in probability to $B$ as $N\to \infty$. Correlation between $\{w^{(i)}\}_{i=1}^N$ and $\{s^{(i)}\}_{i=1}^N$ can be removed by performing regression using $\{(u^{(i)},s^{(i)})\}_{i=1}^N$ where $[u^{(1)},\hdots,u^{(N)}]^\top =U\triangleq W-S\hat{B}$. Suppose the data is centered, i.e., $\mathbf{1}^\top S=\mathbf{0}^\top$ and $\mathbf{1}^\top W=\mathbf{0}^\top$, which also implies that $\mathbf{1}^\top U=\mathbf{0}^\top$.}

\rev{Consider $\hat{z}=S\alpha+U\beta$ as the estimator of $z$ for some $\alpha\in\reals^{d_s}$ and $\beta\in\reals^{d_w}$. Since $\{u^{(i)}\}_{i=1}^N$ and $\{s^{(i)}\}_{i=1}^N$ are uncorrelated, the best estimator of $\hat{z}$ (minimizing MSE) without using sensitive attributes $\{s^{(i)})\}_{i=1}^N$ is given by $\bar{z}=U\beta$. Given a fairness tolerance $\zeta\in[0,1]$, the goal is to compute $(\alpha,\beta)$ such that it minimizes the error $\norm{w-\hat{w}}^2$ subject to $\norm{\bar{w}-\hat{w}}^2/\norm{\hat{w}}^2\leq\zeta$, i.e., the ratio of variance in the estimator $\hat{y}$ explained by the sensitive attributes; thus, as $\zeta$ gets closer to $0$, the regressor becomes more fair, while as $\zeta$ gets closer to $1$ the predictive power of the regressor increases.
This problem is formulated as the following non-convex QCQP \cite{komiyama2018nonconvex}:
\begin{align}\label{fair-nonconvex}
    \min_{x=[\alpha^\top \beta^\top]^\top}&\rho_1(x)\triangleq \alpha^\top V_s \alpha + \beta^\top V_u\beta\rev{-2 q_s^\top\alpha-2 q_u^\top\beta}\\
    \hbox{s.t.}~ & G(x)\triangleq (1-\zeta)\alpha^\top V_s \alpha -\zeta \beta^\top V_u\beta\leq 0, \nonumber
\end{align}
where 
$V_s=\frac{1}{N}S^\top S$ and $V_u=\frac{1}{N}U^\top U$ are sample covariance matrices, \rev{$q_s\triangleq\frac{1}{N}S^\top y$, $q_u\triangleq\frac{1}{N}U^\top y$}, and 
$\zeta\in (0,1)$ is a user-defined fairness tolerance.}

\rev{Let $Q_1\triangleq \begin{bmatrix} V_s & 0 \\ 0 & V_u\end{bmatrix}$, $Q_2\triangleq \begin{bmatrix} V_s & 0 \\ 0 & 0\end{bmatrix}$ and \rev{$q=[q_s^\top q_u^\top]^\top$.}
It has been shown in \cite{komiyama2018nonconvex}[Theorem 4], the solution to the problem \eqref{fair-nonconvex} can be equivalently found by solving a convex QCQP which can be 
\rev{reformulated as:}
\begin{align}\label{fair-saddle}
    \min_{x}\rho_2(x),\quad \hbox{where}\quad\rho_2(x)\triangleq \max\{x^\top Q_1 x,~\frac{1}{\zeta}x^\top Q_2 x \}-2 \rev{q^\top x}.
\end{align}
\eqref{fair-saddle}
is a special case of \eqref{eq:original-problem}; indeed, $\min_x\max_{w\in\Delta_2} w_1 x^\top Q_1 x + \frac{1}{\zeta}w_2 x^\top Q_2 x -\rev{2q^\top x}$ where $\Delta_2\subset\reals^2$ denotes \rev{the 
unit simplex. We solve this SP problem using the proposed algorithm APDB
and compare it with PEGM 
on both real and synthetic datasets.}}

\paragraph{\bf Real Dataset} \rev{We consider the Community and Crime (\texttt{C\&C}) dataset available in UCI repository with 1994 observations, 101 \rev{attributes}, and the National Longitudinal Survey of Youth (\texttt{NLSY79}) dataset\footnote{https://www.bls.gov/nls/} with 6213 observations, 21 \rev{attributes}. For \texttt{C\&C} dataset, the target \rev{attribute $z$} is the normalized violent crime rate of each community and \rev{the sensitive attributes $s_1$, $s_2$} are the ratio of African American people and foreign-born people, respectively. For \texttt{NLSY79} dataset,  the target $z$ is the income of each person in 1990 and $s_1$, $s_2$ are the gender and age, respectively. \rev{After each dataset has been normalized by subtracting the mean and dividing by the standard deviation, we tested 
APDB and PEGM on 10 random replications such that each replication used 80\% of the data points randomly selected from the whole dataset}.}

\paragraph{\bf Synthetic Dataset} \rev{Following a similar setup as in \cite[Appendix B]{komiyama2018nonconvex}, we consider a scenario with $d_s=100$ and $d_w=900$, i.e.,  10\% of the attributes are sensitive, 
and $N=10^4$. 
\rev{The entries of both $W$ and $S$} are drawn from standard normal distribution \rev{$\cN(0,1)$}, and $z=S\ones_{d_s}+\frac{1}{100}W\ones_{d_w}+\eta$ where $\ones_m$ denotes an $m$-dimensional vector of ones, and $\eta\in\reals^N$ is
a random vector with elements drawn from $\cN(0,1)$.
We 
ran APDB and PEGM on 10 random replications.}

\rev{The parameters 
for APDB and PEGM algorithms are set
as in Section~\ref{sec:qcqp} other than $\bar{\tau}=1$.
In Figure \ref{fig:fair-regression}, the plots for three different statistics are displayed against the number of gradient calls, i.e., suboptimality of $x_k$ for \eqref{fair-saddle}, suboptimality of $x_k$ for \eqref{fair-nonconvex}, and infeasibility of $x_k$ for \eqref{fair-nonconvex}. The solid lines indicate the average statistic over 10 randomly generated replications and the shaded region around each \rev{solid} line shows the range statistic over all random instances.} 

\begin{figure}[h!]
\centering
\includegraphics[scale=0.15]{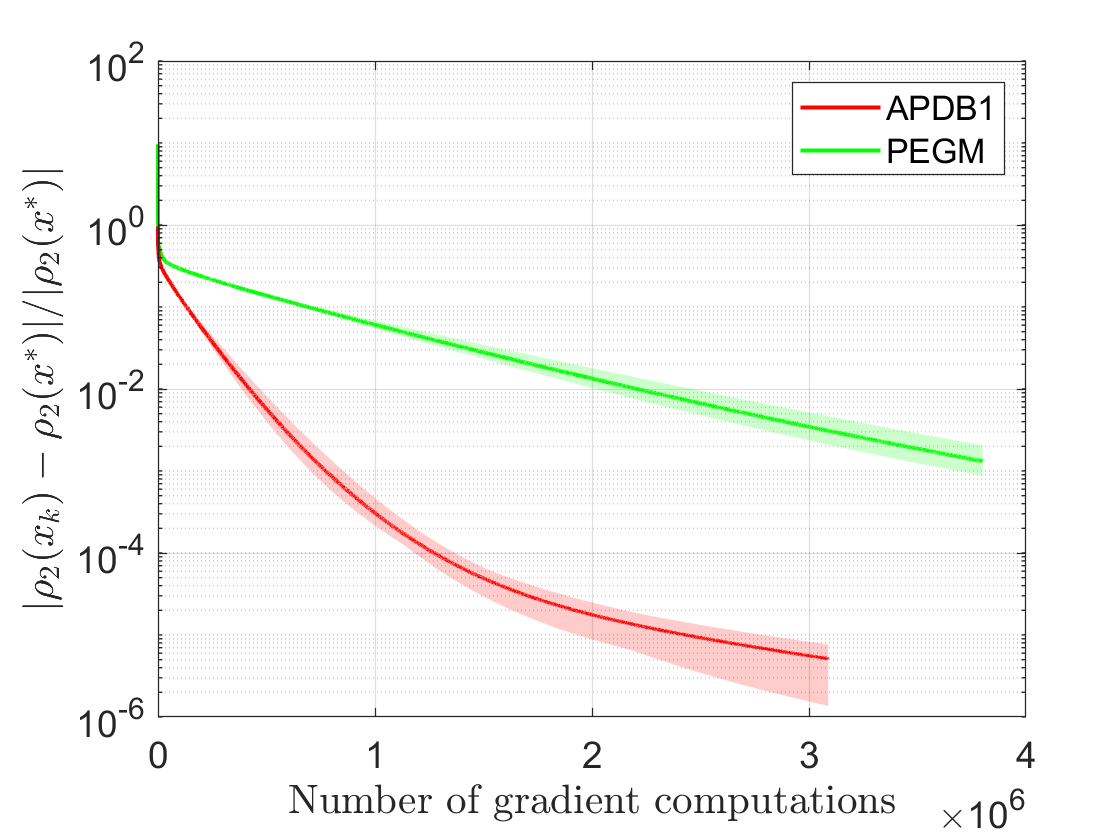}
\hspace{-5mm}
\includegraphics[scale=0.15]{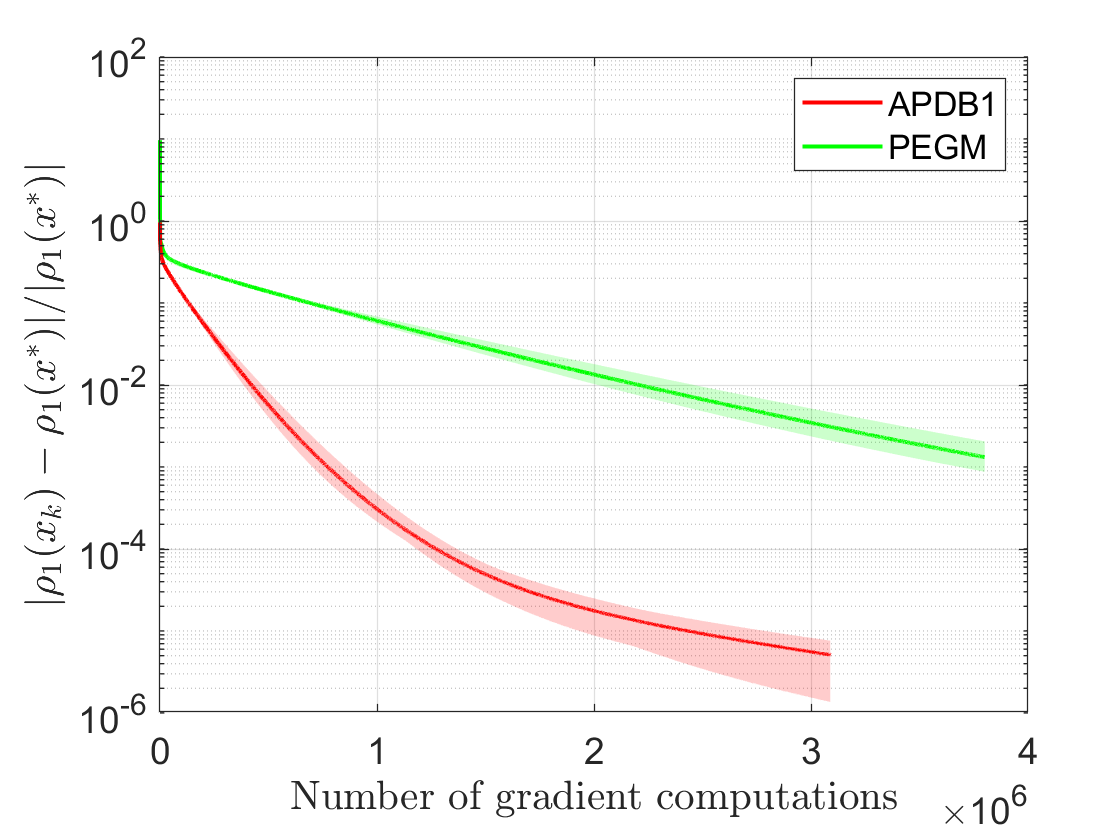}
\hspace{-5mm}
\includegraphics[scale=0.15]{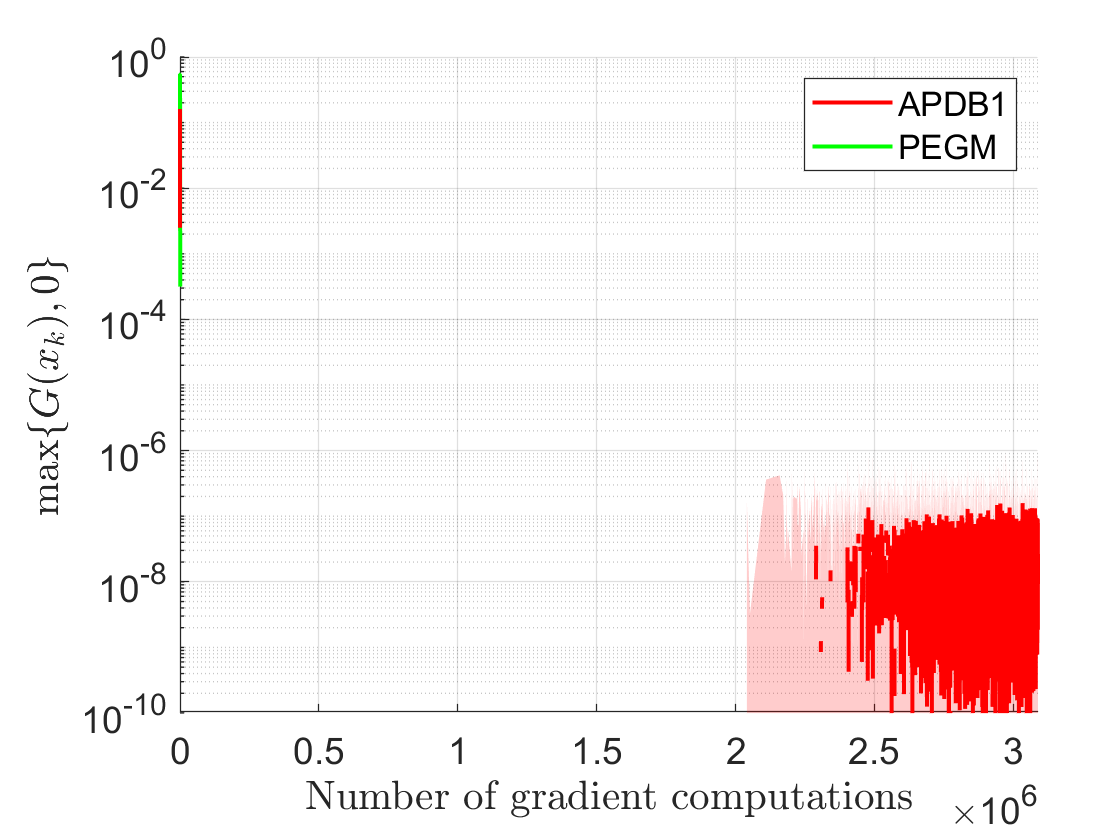}
\includegraphics[scale=0.15]{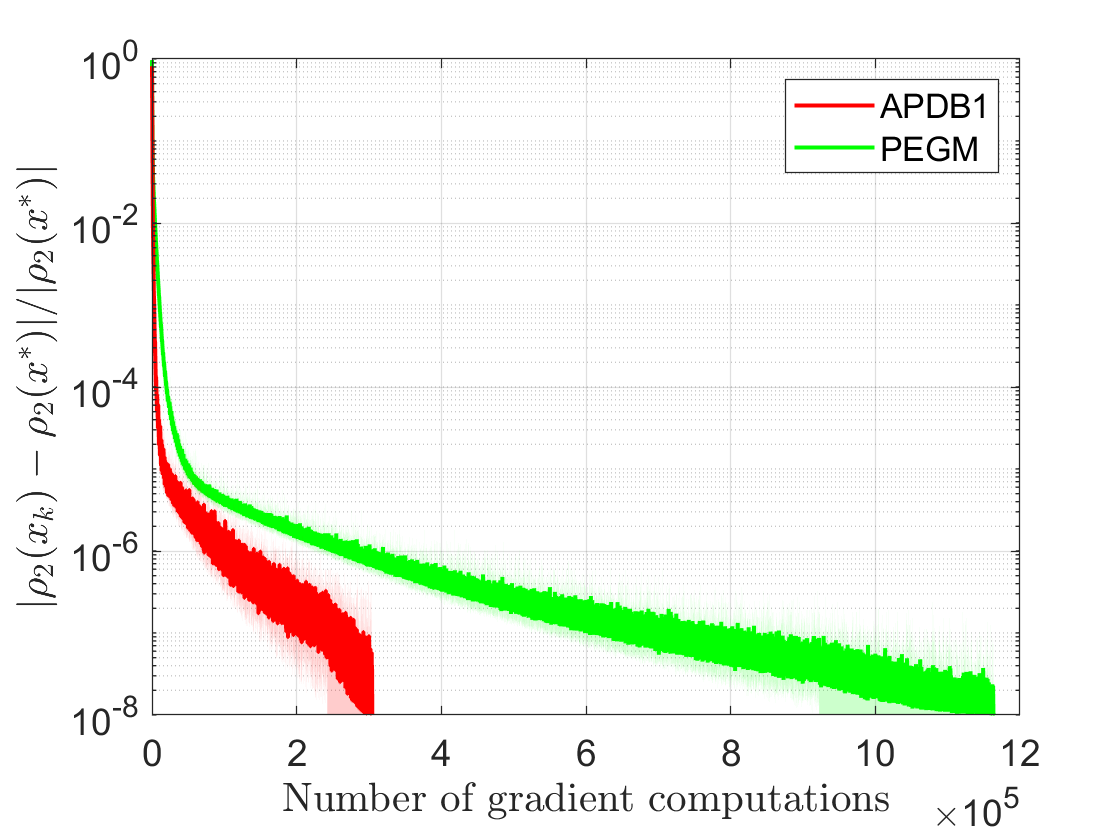}
\hspace{-5mm}
\includegraphics[scale=0.15]{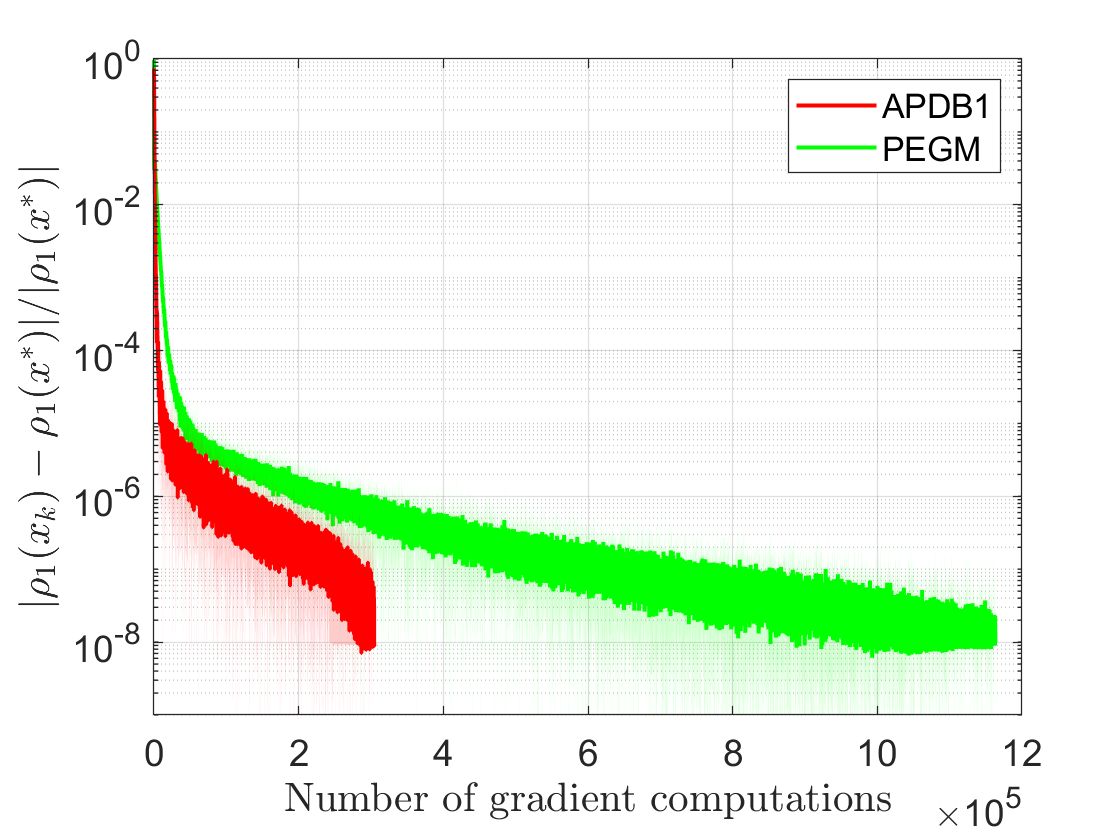}
\hspace{-5mm}
\includegraphics[scale=0.15]{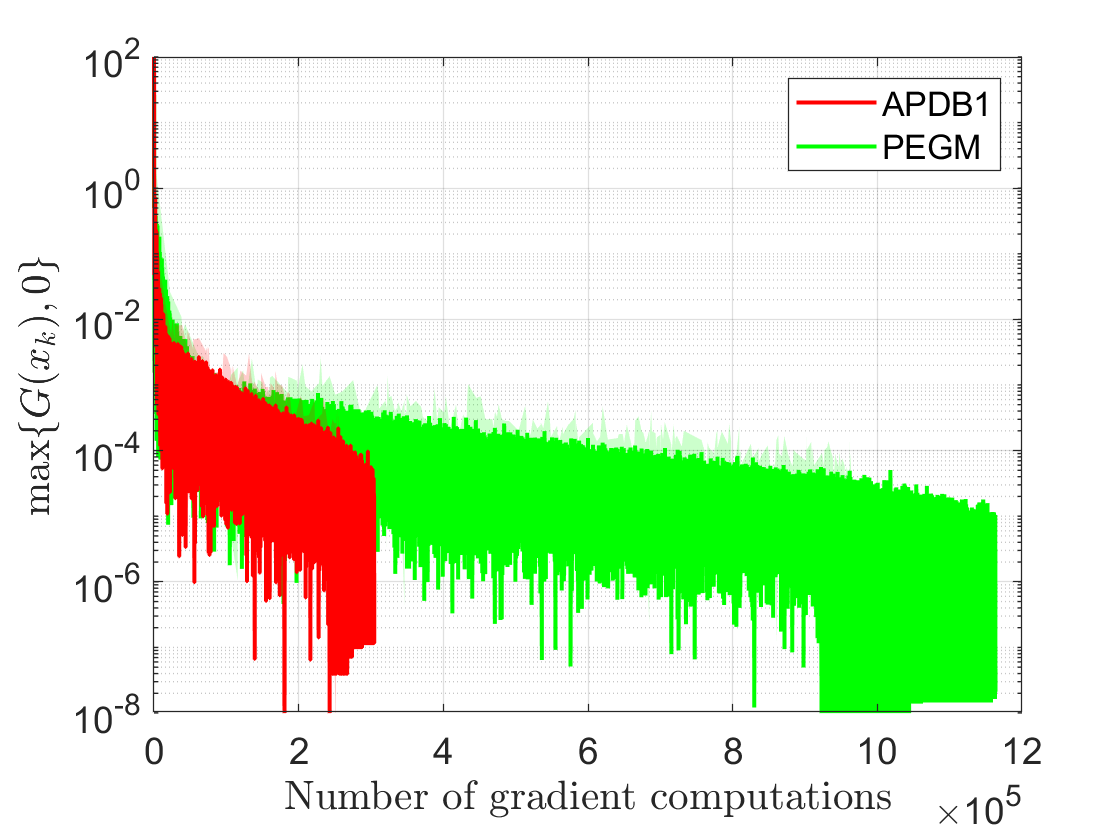}
\includegraphics[scale=0.15]{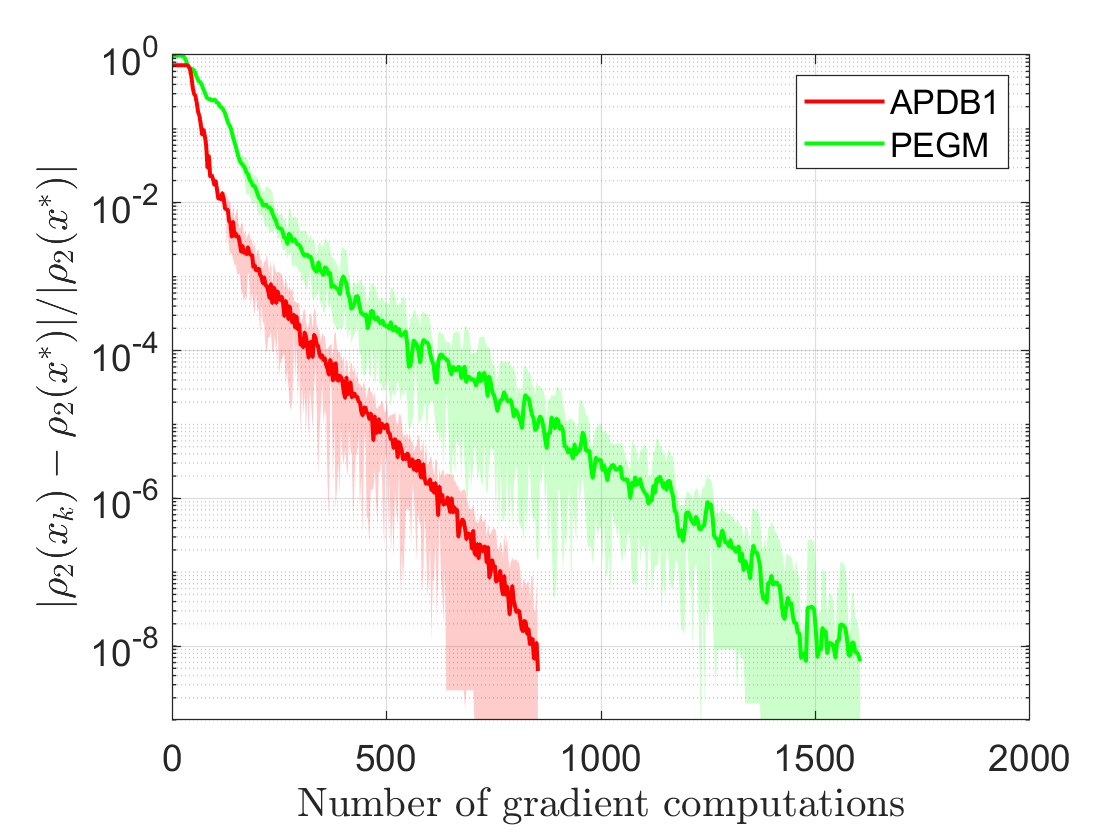}
\hspace{-5mm}
\includegraphics[scale=0.15]{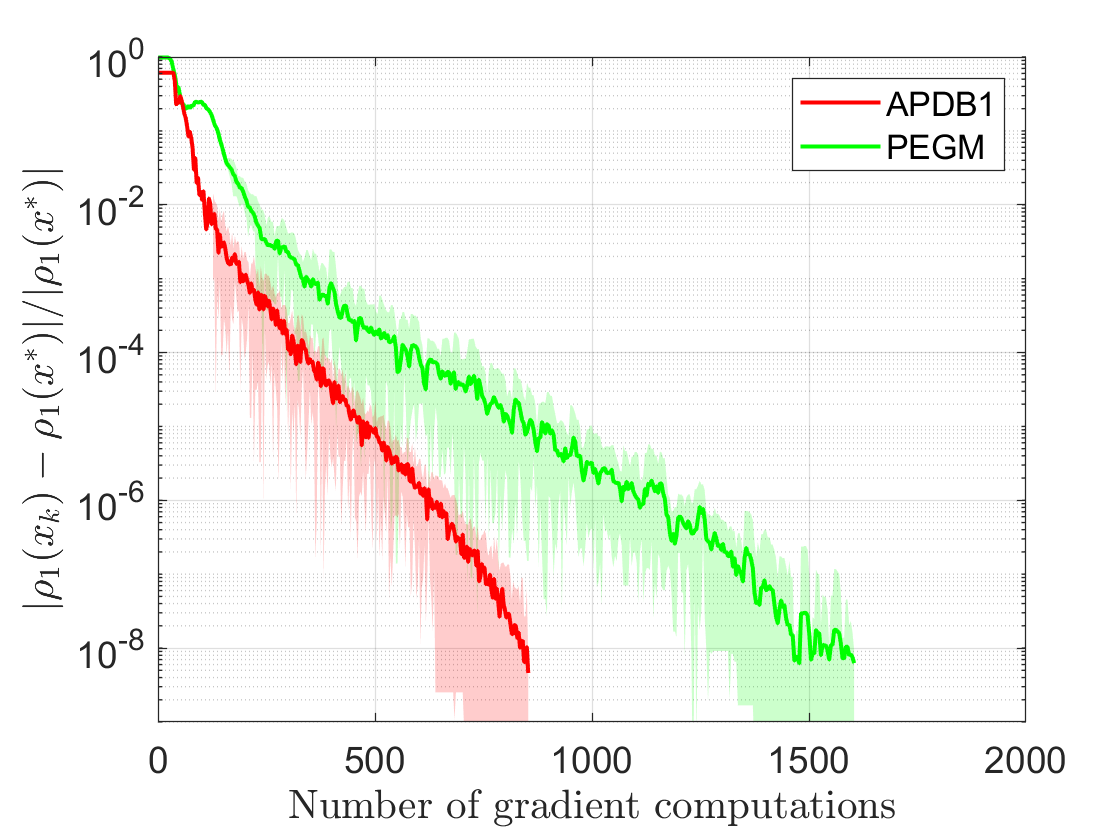}
\hspace{-5mm}
\includegraphics[scale=0.15]{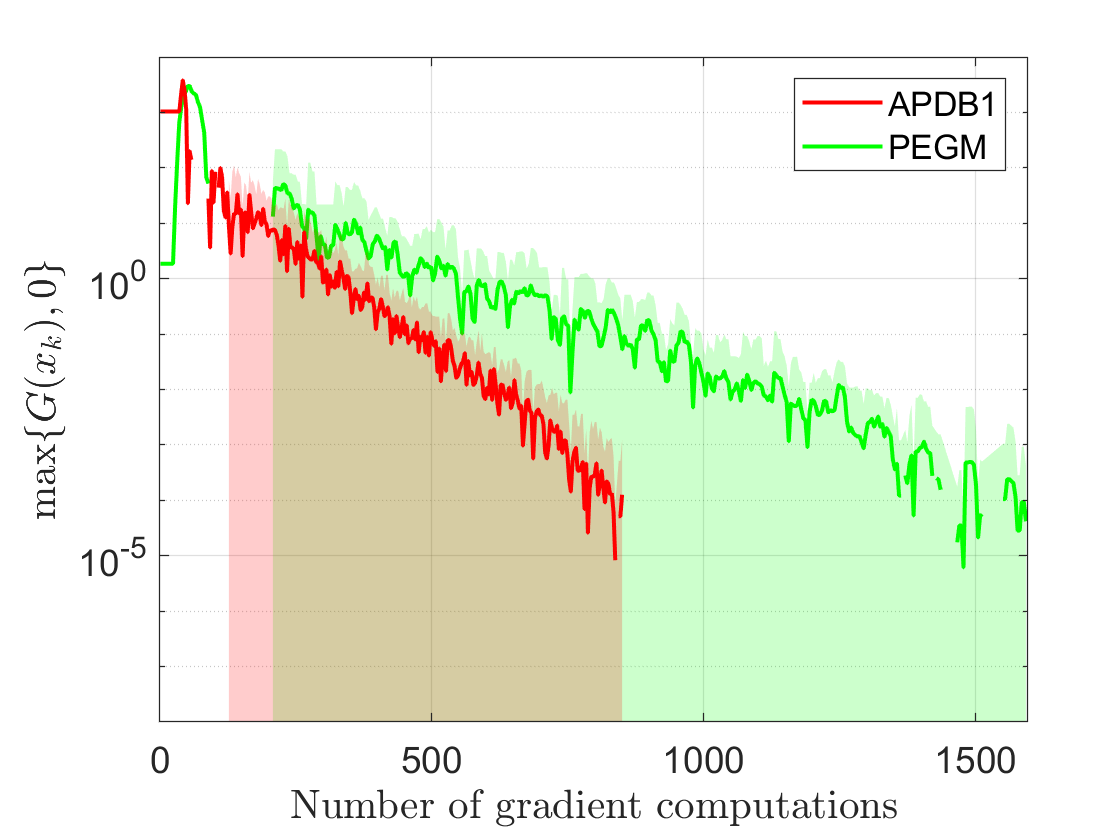}
\caption{\rev{Comparison of methods in terms of suboptimality of \eqref{fair-saddle} (left), suboptimality of \eqref{fair-nonconvex} (middle), and infeasibility of \eqref{fair-nonconvex} (right). The plots from top to bottom row: 
\texttt{C\&C}, \texttt{NSLY79}, and  \texttt{Synthetic}.}}
\vspace*{-2mm}
\label{fig:fair-regression}
\end{figure}

\section{\sa{Concluding remarks and future work}}\label{sec:conclude}
We proposed a primal-dual algorithm with a novel momentum term based on gradient extrapolation
to solve 
SP problems defined by a convex-concave function $\cL(x,y)=f(x)+\Phi(x,y)-h(y)$ with a general coupling term $\Phi(x,y)$ that is \emph{not} assumed to be bilinear. Assuming $\grad_y\Phi$ is Lipschitz and $\grad_x\Phi(\cdot,y)$ is Lipschitz for any fixed $y$, {we show that $\{x_k,y_k\}$ converges to a saddle point $(x^*,y^*)$} and derive error bounds in terms of {$\cL(\bar{x}_K,y^*)-\cL(x^*,\bar{y}_K)$} for the ergodic sequence -- without requiring primal-dual domains to be bounded; in particular, we show $\cO(1/K)$ rate when the problem is merely convex in $x$ using a constant step-size rule, {where $K$ denotes the number of gradient computations.} 
Assuming $\Phi$ is affine in $y$ 
and $f$ is strongly convex, we also obtained the ergodic convergence rate of $\cO(1/K^2)$. {Moreover, we introduced a backtracking scheme that guarantees the same results 
without requiring the Lipschitz constants $L_{xx}$, $L_{yx}$, and $L_{yy}$ to be known. To best of our knowledge, this is the first time $\cO(1/K^2)$ rate is shown for a single-loop method and the first time that a line-search method ensuring $\cO(1/K^2)$ rate is proposed for solving convex-concave 
SP problems when $\Phi$ is not bilinear.} 
Our new method captures a wide range of optimization problems, especially, those that can be cast as an SDP or a QCQP, or more generally conic convex problems with nonlinear constraints. 
{In the context of constrained optimization, 
using the proposed backtracking scheme, we demonstrated convergence results even when a dual bound is not available or easily computable.}
It has been illustrated in the numerical experiments that the APD method can compete against second-order methods when we aim at a low-to-medium-level accuracy. {Moreover, APD and APDB methods performs very well in practice in compare to the other competitive methods.}

As a future work, we are eager to investigate the stochastic variant of problem \eqref{eq:original-problem} where $\Phi(x,y)=\bE_\xi[\varphi(x,y;\xi)]$ and $\xi$ is a random variable. Based on our preliminary work on this setup, we have realized that one needs to require a more complicated set of assumptions on the step-size selection rule, and we will consider extending our APD framework to this stochastic setting. 
Indeed, we have already attempted to analyze randomized block-coordinate variants of APD in~\cite{hamedani2018iter,hamedani2019doubly}. In~\cite{hamedani2018iter}, we showed that if $\grad_x\Phi(\cdot,y)$ is coordinatewise Lipschitz for any fixed $y$ and Assumption~\ref{assum} holds, then the ergodic sequence generated by a variant of APD updating a randomly selected primal coordinate at each iteration together with a full dual variable update achieves the $\cO(m/k)$ convergence rate in a suitable error metric where $m$ denotes the number of coordinates for the primal variable, and $\{x_k,y_k\}$ converges to a random saddle point in an almost sure sense. 
Assuming that $\Phi(\cdot,y)$ is strongly convex for any $y$ with a uniform convexity modulus, and that $\Phi$ is linear in $y$, 
convergence rate of $\cO(m/k^2)$ is also shown. Moreover, in~\cite{hamedani2019doubly}, we considered $\min_x\max_y\Phi(x,y)$ where $\Phi(x,y)=\sum_{\ell=1}^p\Phi_{\ell}(x,y)$ for some $p\gg 1$, and both $x$ and $y$ are partitioned into block coordinates. We employ a block-coordinate primal-dual scheme in which randomly selected primal and dual blocks of variables are updated at every iteration. An ergodic convergence rate of $\cO(1/k)$ is obtained using variance reduction which is the first rate result to the best of our knowledge for the finite-sum non-bilinear saddle point problem, matching the best known rate for primal-dual schemes using full gradients. Using an acceleration for the setting where a single component gradient is utilized, a non-asymptotic rate of $\cO(1/\sqrt{k})$ is obtained. For both single and increasing sample size scenarios, almost sure convergence of the iterate sequence to a saddle point is shown. As a future work, we would like to incoorporate a stochastic line-search technique to our APD-based methods that use stochastic gradients or partial gradients for solving SP problems with $\Phi(x,y)=\bE_\xi[\varphi(x,y;\xi)]$.
\vspace*{-3mm}
\bibliographystyle{siam}
{\small
\bibliography{papers}}%
\section{Appendix}\label{append}
\begin{lemma}
\label{lem_app:prox}
Let $\cX$ be a finite dimensional normed vector space with norm $\norm{\cdot}_\cX$, $f:\cX\rightarrow\reals\cup\{+\infty\}$ be a closed convex function with convexity modulus $\mu\geq 0$ with respect to $\norm{\cdot}_\cX$, and $\bD:\cX\times\cX\rightarrow\reals_+$ be a Bregman distance function corresponding to a strictly convex function $\phi:\cX\rightarrow\reals$ that is differentiable on an open set containing $\dom f$. Given $\bar{x}\in\dom f$ and $t>0$, let
\begin{align}
\label{eq_app:prox}
x^+=\argmin_{x\in\cX} f(x)+t \bD(x,\bar{x}).
\end{align}
Then for all $x\in\cX$, the following inequality holds:
\begin{eqnarray}
f(x)+t\bD(x,\bar{x})\geq f(x^+) + t\bD(x^+,\bar{x})+t \bD(x,x^+)+\frac{\mu}{2}\norm{x-x^+}_\cX^2. \label{eq_app:bregman}
\end{eqnarray}
\end{lemma}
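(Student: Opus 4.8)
The plan is to derive the inequality from the first-order optimality condition for $x^+$, combined with the strong convexity of $f$ and a three-point identity for Bregman distances. First I would record that, since $\phi$ is differentiable on an open set containing $\dom f$, the map $x\mapsto \bD(x,\bar{x})=\phi(x)-\phi(\bar{x})-\fprod{\grad\phi(\bar{x}),x-\bar{x}}$ is differentiable with $\grad_x\bD(x,\bar{x})=\grad\phi(x)-\grad\phi(\bar{x})$. Because the objective in \eqref{eq_app:prox} is the sum of the closed convex $f$ and the smooth convex term $t\bD(\cdot,\bar{x})$, the subdifferential sum rule applies at the minimizer $x^+\in\dom f$; hence optimality $0\in\partial\big(f+t\bD(\cdot,\bar{x})\big)(x^+)$ yields a subgradient $g\in\partial f(x^+)$ satisfying $g=-t\big(\grad\phi(x^+)-\grad\phi(\bar{x})\big)$.

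Next I would invoke the convexity modulus of $f$: applying \eqref{assum-sc} at the point $x^+$, for the subgradient $g$ above and every $x\in\cX$,
\[
f(x)\geq f(x^+)+\fprod{g,\,x-x^+}+\frac{\mu}{2}\norm{x-x^+}_\cX^2.
\]
Substituting $g=-t\big(\grad\phi(x^+)-\grad\phi(\bar{x})\big)$ turns the inner-product term into $-t\fprod{\grad\phi(x^+)-\grad\phi(\bar{x}),\,x-x^+}$.

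The final ingredient is the purely algebraic three-point identity
\[
\bD(x,\bar{x})-\bD(x^+,\bar{x})-\bD(x,x^+)=\fprod{\grad\phi(x^+)-\grad\phi(\bar{x}),\ x-x^+},
\]
which follows by expanding all three Bregman terms via $\bD(u,v)=\phi(u)-\phi(v)-\fprod{\grad\phi(v),u-v}$: the $\phi(x)$ and $\phi(x^+)$ contributions cancel, $\phi(\bar{x})$ cancels, and the remaining linear terms collapse to the claimed inner product. Plugging this identity into the inequality from the previous step and moving $t\bD(x,\bar{x})$ to the left-hand side produces exactly \eqref{eq_app:bregman}.

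I expect no genuinely hard step here; the only point requiring care is the justification of the optimality decomposition — namely that $\partial\big(f+t\bD(\cdot,\bar{x})\big)(x^+)=\partial f(x^+)+t\,\grad_x\bD(x^+,\bar{x})$ — which is legitimate precisely because $t\bD(\cdot,\bar{x})$ is everywhere differentiable near $x^+$, so no constraint qualification is needed beyond $x^+\in\dom f$. Everything else is the standard expansion of Bregman distances together with a single application of the $\mu$-strong-convexity bound.
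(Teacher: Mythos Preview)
Your proof is correct and follows essentially the same route as the paper: first-order optimality gives $t(\grad\phi(\bar{x})-\grad\phi(x^+))\in\partial f(x^+)$, then $\mu$-strong convexity of $f$ is applied, and finally the three-point Bregman identity converts the inner product into $\bD(x,\bar{x})-\bD(x^+,\bar{x})-\bD(x,x^+)$. The only difference is that you spell out the three-point identity and the subdifferential sum rule explicitly, whereas the paper leaves these as ``immediately follows.''
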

\begin{proof}
This result is a trivial extension of Property 1 in~\cite{Tseng08_1J}. The first-order optimality condition for \eqref{eq_app:prox} implies that
$0\in\partial f(x^+)+ t\grad_x \bD(x^+,\bar{x})$ -- here $\grad_x \bD$ denotes the partial gradient with respect to the first argument. Note that for any $x\in \dom f$, we have $\grad_x \bD(x,\bar{x})=\grad \phi(x)-\grad \phi(\bar{x})$. Hence,
$t(\grad \phi(\bar{x})-\grad \phi({x}^+))\in\partial f(x^+)$.
Using the convexity inequality for $f$, we get
\begin{eqnarray*}
f(x)\geq f(x^+)+t\fprod{\grad \phi(\bar{x})-\grad \phi({x}^+),~x-x^+}+\frac{\mu}{2}\norm{x-x^+}_\cX^2.
\end{eqnarray*}
The result in \eqref{eq_app:bregman} immediately follows from this inequality.
\end{proof}
\end{document}